\theoremstyle{plain}
\newtheorem{theorem}{Theorem}[section]
\newtheorem{corollary}[theorem]{Corollary}
\newtheorem{lemma}[theorem]{Lemma}
\newtheorem{proposition}[theorem]{Proposition}
\newtheorem{fact}[theorem]{Fact}
\newtheorem{claim}[theorem]{Claim}
\theoremstyle{definition}
\newtheorem{definition}[theorem]{Definition}
\newtheorem{remark}[theorem]{Remark}
\newtheorem{notation}[theorem]{Notation}
\theoremstyle{remark}
\newcommand{\ob}{\textup{Ob}}
\newcommand{\mor}{\textup{Mor}}
\newcommand{\dom}{\operatorname{dom}}
\newcommand{\tp}{\operatorname{tp}}
\newcommand{\bdd}{\operatorname{bdd}}
\newcommand{\acl}{\operatorname{acl}}
\newcommand{\dcl}{\operatorname{dcl}}
\newcommand{\id}{\operatorname{id}}
\renewcommand{\phi}{\varphi}
\newbox\noforkbox \newdimen\forklinewidth
\noforkbox\hbox{\lower 2pt\box1\lower
2pt\box0\relax}
\def\unionstick{\mathop{\copy\noforkbox}\limits}
\def\nonfork_#1{\unionstick_{\textstyle #1}}
\newbox\doesforkbox
\doesforkbox\hbox{\lower 2pt\box1 \lower
2pt\box2\lower2pt\box0\relax}
\def\nunionstick{\mathop{\copy\doesforkbox}\limits}
\def\fork_#1{\nunionstick_{\textstyle #1}}
\newcommand{\C}{\operatorname{\mathfrak{C}}}
\renewcommand{\P}{{\mathcal P}}
\newcommand{\Pm}{{\mathcal P}^{-}}
\newcommand{\B}{\mathcal{B}}
\newcommand{\Aut}{\operatorname{Aut}}
\newcommand{\G}{\mathcal{G}}
\def\a{\bar{a}}
\def\ov{\overline}
\def\Q{Q}
\def\A{\mathfrak{a}}
\def\B{\mathfrak{b}}
\def\cC{\mathcal{C}}
\def\CC{\cC}
\def\ind{\nonfork}
\def\CP{\P}
\def\be{\begin{enumerate}}
\def\ee{\end{enumerate}}
\def\bsigma{\mbox{\boldmath $\sigma$}}
\def\btau{\mbox{\boldmath $\tau$}}
\title[Amalgamation functors and boundary properties]{Amalgamation functors and boundary properties in simple theories}
\author{John Goodrick}
\author{Byunghan Kim}
\author{Alexei Kolesnikov}\thanks{The second author was supported by NRF grant 2010-0016044.
The third author was partially supported by NSF grant DMS-0901315.}
\begin{document}

\maketitle

\begin{abstract}
This paper continues the study of generalized amalgamation properties begun in \cite{DKY}, \cite{GK}, \cite{Hr}, and \cite{KKT}.  Part of the paper provides a finer analysis of the groupoids that arise from failure of $3$-uniqueness in a stable theory. We show that such groupoids must be abelian and link the binding group of the groupoids to a certain automorphism group of the monster model, showing that the group must be abelian as well.

We also study connections between $n$-existence and $n$-uniqueness properties for various ``dimensions'' $n$ in the wider context of simple theories. We introduce a family of weaker existence and uniqueness properties. Many of these properties did appear in the literature before; we give a category-theoretic formulation and study them systematically. Finally, we give examples of first-order simple unstable theories showing, in particular, that there is no straightforward  generalization of the groupoid construction in an unstable context.
\end{abstract}

\section*{Introduction}

Amalgamation properties have been one of the main tools in the study of simple theories from the beginning, starting in \cite{KP} with the Kim and Pillay's proof of the Independence Theorem for simple theories (this is the property that we call 3-existence). Further work on higher amalgamation properties was carried out in \cite{nsimple} and \cite{KKT}, and in \cite{DKY} it was shown that Hrushovski's group configuration theorem can be generalized to simple theories under the additional assumption of what we are calling $4$-existence in this paper.

While 3-existence holds in all simple theories, not all stable theories have $n$-existence property for $n\ge 4$. The first examples of this were suggested by Hrushovski (appeared in~\cite{DKY}), and in \cite{PS} Pastori and Spiga constructed a family of stable theories $T_n$ (for $n \geq 4$) which have $k$-existence for $k < n$ but fail to have $n$-existence.
In \cite{Hr}, Hrushovski has linked $4$-existence in stable theories with 
the existence of definable groupoids that are \emph{non-eliminable}, i.e., are not definably equivalent, in a category-theoretic sense, to a definable group. An explicit construction of a non-eliminable groupoid that witnesses the failure of 4-existence (or, equivalently, 3-uniqueness) in a stable theory appears in~\cite{GK}. We give the definition of a definable groupoid in Section~2 of this paper and refer the reader to the papers \cite{GK,Hr} for the notions related to eliminability of groupoids.

Section~2 of the current paper focuses on groupoids that arise from the failure of 3-uniqueness property in a stable theory. Any such failure is witnessed by a set of elements $W=\{a_1,a_2,a_3,f_{12},f_{23},f_{13}\}$ and a formula $\theta$ such that $f_{ij}\in \acl(a_ia_j)\setminus \dcl(a_i a_j)$ and $\theta$ witnesses that $f_{ij}$ is definable from $f_{ik}$ and $f_{jk}$. The complete set of properties is listed in Definition~\ref{full_symm_witness}. This set of elements, called the full symmetric witness, allows to construct a definable groupoid (the construction was carried out in~\cite{GK}). In this paper, we show that such a groupoid must have abelian binding groups $\mor_{\G}(a,a)$ and that the  composition law of the groupoid $\G$ directly corresponds to the formula $\theta$ (Proposition~\ref{coherence1}). We show that if $f'_{ij}\equiv _{a_ia_j} f_{ij}$, then the groupoid constructed from the witness $\{a_1,a_2,a_3,f'_{12},f'_{23},f'_{13}\}$ is definably isomorphic to the groupoid obtained from $W$.

We also show that the binding group of the groupoid is isomorphic to $\Aut(f_{12}/a_1a_2)$. This allows to conclude that, in a stable theory for any independent realizations $a$, $b$, $c$ of a non-algebraic type, the group $\Aut(\widetilde{ab}/\acl(a)\acl(b))$ is abelian. The symbol $\widetilde{ab}$ denotes the set $\acl(ab)\cap \dcl(\acl(ac),\acl(bc))$.

The remainder of the paper (Sections 3, 4, and 5) is devoted to the analysis of connections between generalized amalgamation properties of higher dimensions and in the broader context of simple theories. To motivate the analysis, let us give an informal description of amalgamation properties of ``dimension''~4.

For each three-element subset $s$ of $4=\{0,1,2,3\}$, let $\A(s)$ be a subset of the monster model of a first-order theory such that the family $\{\A(s)\}$ satisfies the compatibility and independence conditions. The compatibility conditions say, in particular, that if $t$ is a common subset of $s_1$ and $s_2$, then there is an elementary bijection between the subsets $\A_{s_1}(t)$ and $\A_{s_2}(t)$ of $\A(s_1)$ and $\A(s_2)$, respectively. The independence conditions say that the sets $\{\A_{s}(\{i\})\mid i\in s\}$ are independent for every 3-element set $s$. A separate condition demands that each $\A_{s}(\{i\})$ is an algebraically (or boundedly) closed set.

It is convenient to visualize the sets $\A(s)$, for 3-element subsets $s$, as triangles. The sets $\A_s(\{i\})$ for $i\in s$ are the vertices in the triangle; the compatibility conditions say that the corresponding vertices and edges have the same type. For example, the ``vertices'' $\A_{123}(\{3\})$ and $\A_{023}(\{3\})$ should have the same type and $\left\{\A_{123}(\{1\}),\A_{123}(\{2\}),\A_{123}(\{3\})\right\}$ should be an independent set (over the base $\A_{123}(\emptyset)$).

\begin{center}
\begin{picture}(0,0)%
\includegraphics{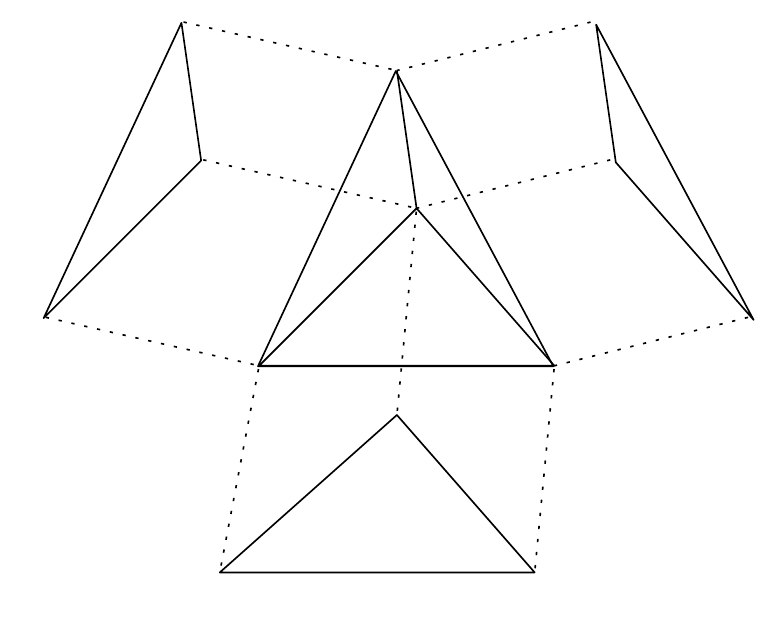}%
\end{picture}%
\setlength{\unitlength}{4144sp}%
\begin{picture}(3495,2916)(-374,-2164)
\put(1036,-1681){\makebox(0,0)[lb]{\smash{$\A(013)$}%
}}
\put(1486,-1186){\makebox(0,0)[lb]{\smash{$\A_{013}(3)$}%
}}
\put(1531,-421){\makebox(0,0)[lb]{\smash{3}%
}}
\put(1396,479){\makebox(0,0)[lb]{\smash{2}%
}}
\put(3106,-781){\makebox(0,0)[lb]{\smash{$\A_{123}(1)$}%
}}
\put(2431,569){\makebox(0,0)[lb]{\smash{$\A_{123}(2)$}%
}}
\put(1846, 29){\makebox(0,0)[lb]{\smash{$\A_{123}(3)$}%
}}
\put(136,-1996){\makebox(0,0)[lb]{\smash{$\A_{013}(0)$}%
}}
\put(2116,-1996){\makebox(0,0)[lb]{\smash{$\A_{013}(1)$}%
}}
\put(1036,-2086){\makebox(0,0)[lb]{\smash{$\A_{013}(01)$}%
}}
\put(2206,-1096){\makebox(0,0)[lb]{\smash{1}%
}}
\put(631,-1096){\makebox(0,0)[lb]{\smash{0}%
}}
\put(-359,-916){\makebox(0,0)[lb]{\smash{$\A_{023}(0)$}%
}}
\put(-134,569){\makebox(0,0)[lb]{\smash{$\A_{023}(2)$}%
}}
\put(541, 74){\makebox(0,0)[lb]{\smash{$\A_{023}(3)$}%
}}
\end{picture}%
\end{center}


The property of 4-existence asserts that, if $\{\A(s)\mid s\subset 4,\ |s|=3\}$ is a compatible independent system of boundedly closed sets, the there is a set $\A(\{0,1,2,3\})$ and elementary embeddings of the sets $\A(s)$ into $\A(\{0,1,2,3\})$ that commute with the given elementary bijections of the system. The property of 4-uniqueness says that such a set $\A(\{0,1,2,3\})$ is unique up to isomorphism.  The precise definition of $n$-uniqueness is stated in the definitions in Section~1; we follow \cite{Hr} in formulating the amalgamation properties in category-theoretic language, and in particular we link $n$-uniqueness with natural isomorphism of functors.

The property of $(4,1)$-amalgamation ensures the existence of the set $\A(\{0,1,2,3\})$ when each ``face'' $\A(\{i,j,k\})$ is the union of its vertices; $(4,2)$-amalgamation says that the set $\A(\{0,1,2,3\})$ exists when each such face is the union of the closures of its ``edges'' $\A(\{i,j\}), \A(\{i,k\}),$ and $\A(\{j,k\})$.

The property $B(3)$ (where ``$B$'' stands for ``boundary'') is a weaker form of 3-uniqueness; it says that the union of any automorphisms of all the edges in a triangle that fix the vertices is an elementary map. The property is not new, it was used, for example, in~\cite{Hr, DKY} and later in \cite{GK}.

Here are some examples of known connections between different amalgamation properties; we take these as a starting point. Any stable theory has $(n,1)$-amalgamation for all $n\ge 2$. However, there are stable theories that fail $4$-existence (any such theory must fail $B(3)$ and $(4,2)$-existence). It was shown in~\cite{DKY} that, in a stable theory, the properties $B(k)$, $3\le k\le n$, imply $(n+1)$-existence. For simple theories, the random tetrahedron-free hypergraph (see \cite{KKT}) has $B(3)$ but fails $(4,1)$-amalgamation.

It was shown by Hrushovski in~\cite{Hr} that, for $n\ge 2$, the properties of $n$-existence and $n$-uniqueness imply $(n+1)$-existence for any simple theory. It is natural to ask whether there is a reverse implication. In Section 5, we construct an example of a simple theory that has $4$-existence (and any simple theory has 3-existence), but fails 3-uniqueness. In fact, the theory fails even the weaker property $B(3)$.

However, in Section~3 we obtain the following (Corollary~\ref{n-uniqueness}): if a simple theory has $(n-1)$-uniqueness and $(n+1)$-existence, then it has $n$-uniqueness. We prove this in two stages, first showing that $(n-1)$-uniqueness and $(n+1)$-existence imply the weaker property $B(n)$, and then showing that $(n-1)$-uniqueness with $B(n)$ imply $n$-uniqueness.
Thus, in any simple theory, the failure of $n$-uniqueness is linked to a failure of either $(n-1)$-uniqueness or of $(n+1)$-existence.

In Section~4, we show (Corollary~\ref{Bn_simplicity}) that if $T$ is simple and \emph{fails} to have $n$-existence for some $n \geq 4$, then there is a dichotomy: either $T$ fails to have one of the weak uniqueness properties $B(k)$ for some $k$ between $3$ and $n-1$, or else $T$ fails to have $(n,1)$-amalgamation.

Ideally, one would like to use the above dichotomy to show that \emph{any} failure of $4$-existence in a simple theory arises from the interpretability (in some suitable sense) of either a non-eliminable groupoid or a tetrahedron-free hypergraph.  However, an obstacle to proving this is that unlike in the stable case, the failure of $B(3)$ in a simple theory does not give rise to a non-eliminable definable groupoid, as we show in our first example in Section~5 of this paper.

The authors are grateful to the Maryland Logic Seminar for providing a friendly venue to present the preliminary results contained in this paper. We would like to thank Martin Bays for his interest and helpful conversations.


\section{The basic amalgamation properties: $n$-existence and $n$-uniqueness}

First, some notation. In this section of the paper, the default assumption is that the theory $T$ we work with is simple. When stronger results hold for stable theories, the stability assumption is stated explicitly. Let $\C$ be the monster model of $T$. For a simple unstable $T$ we work with $\C^{heq}$ and for a stable $T$ we work with $\C^{eq}$.

Bars over sets or tuples denote the algebraic closure of the set if the underlying theory is stable; and the bounded closure of the set if the theory is simple unstable. Although we have not essentially used it in the paper  the reader may wish to make a simplifying assumption that all theories considered here have elimination of hyperimaginaries; then the bars would denote the algebraic closure in both contexts.

A hat over an element means that it is omitted from a list.  For example, ``$\overline{a_0 \ldots \widehat{a_i} \ldots a_{n-1}}$'' means $``\acl(a_0, \ldots, a_{i-1}, a_{i+1}, \ldots, a_{n-1})$'' if we are dealing with a stable theory and $``\bdd(a_0, \ldots, a_{i-1}, a_{i+1}, \ldots, a_{n-1})$'' in the simple unstable context.

If we are working with a set $\{a_0, \ldots, a_{n-1}\}$ of elements that are independent over some base set $B$ and $u \subseteq \{0, \ldots, n-1\}$, we write ``$\overline{a}_u$'' for the set $$\overline{\{a_i : i \in u\} \cup B}.$$  For example, $\overline{a}_\emptyset$ is either $\acl(B)$ or $\bdd(B)$.

If $\C$ is a monster model of $T$ and $A$, $B$ are subsets of $\C$, then by $\Aut(A/B)$ we mean the group of elementary permutations of $A\cup B$ that fix $B$ pointwise.

Now we define the properties of $n$-existence and $n$-uniqueness.  Following \cite{Hr}, we define these in terms of solutions to independent amalgamation problems. Both the amalgamation problems and their solutions are functors between certain categories.

We let $n$ refer to the $n$-element set $\{0, \ldots, n-1\}$, and $\mathcal{P}^{-}(n) = \{s : s \subsetneq n\}$.  If $1 \leq i \leq n$, we use the notation ``$s \subseteq_i n$'' to mean that $s$ is an $i$-element subset of $n$.  If $S\subseteq \P(n)$ be closed under subsets, we view $S$ as a category, where the objects are the elements of $S$ and morphisms are the inclusion maps. Let $\cC$ be the category, where the objects are all algebraically closed subsets of $\C^{eq}$ (if we are working with a stable theory) or all boundedly closed subsets of $\C^{heq}$ (if the theory is simple unstable). In either case, the morphisms are the elementary maps, not necessarily surjective.

\begin{definition}
An \emph{$n$-amalgamation problem} is a functor $\A: \mathcal{P}^{-}(n) \rightarrow \cC$. A \emph{solution} to an $n$-amalgamation problem $\A$ is an extension to a functor $\A': \mathcal{P}(n) \rightarrow \cC$.
\end{definition}

\begin{definition}
If $S$ is a subset of $n$ closed under subsets and $\A:S\to \cC$ is a functor, then for $s \subseteq t \in S$, let the \emph{transition map} $$\A(s,t) : \A(s) \rightarrow \A(t)$$ be the image of the inclusion $s \subseteq t$.  When it is more convenient, we will also use the notation ``$\A_{s,t}$'' for the transition map $\A_{s,t}$.  So functoriality means that $\A_{t,u} \circ \A_{s,t} = \A_{s,u}$ whenever the composition is defined.
\end{definition}

\begin{notation}
If $S$ is a subset of $n$ closed under subsets, $\A:S\to \cC$ is a functor, and $s\subset t\in S$, we use the symbol $\A_t(s)$ to denote the subset $\A_{s,t}(\A(s))$ of $\A(t)$.

If $\A:\P(n)\to \cC$ is a functor, then $\A^-$ denotes the functor $\A\restriction \Pm(n)$.
\end{notation}

The following notion will be helpful in the proofs of Proposition~\ref{weak uq -> uq} and Theorem~\ref{implies B(n)}.

\begin{definition}\label{localization}
Suppose that $S\subset \P(n)$ be closed under subsets and $u$ is in $S$. Let $S|_{u}$ denote the following subset of $\P(n \setminus u)$:
$$
S|_{u}:=\{s \in \P(n \setminus u) \mid s \cup u \in S\}.
$$
If $S$ is as above and $\A:S\to \cC$ is a functor, define $\A|_{u}$ to be the functor on the set $S|_{u}$ such that $$\A|_{u}(s) = \A(s \cup u)$$ and the transition maps of $S|_{u}$ are the ones induced naturally from $\A$. We call the functor $\A|_{u}$ \emph{the localization of $\A$ at $u$}.
\end{definition}

\begin{definition}
Two solutions $\A'$ and $\A''$ of the $n$-amalgamation problem $\A$ are \emph{isomorphic} if there is an elementary map $\sigma: \A'(n) \rightarrow \A''(n)$ such that for any $s \subsetneq n$, $$\sigma \circ \A'_{s,n} = \A''_{s,n}.$$

\end{definition}



\begin{definition}
Suppose that $S$ is a subset of $n$ closed under subsets and $\A:S\to \cC$ is a functor.

\begin{enumerate}

\item We say that $\A$ is \emph{independent} if for every nonempty $s \in S$, $\left\{\A_{s} (\{i\}) : i \in s \right\}$ is an $\A_{s}(\emptyset)$-independent set.

\item We say that $\A$ is \emph{closed} if for every nonempty $s \in S$, $\A(s) = \ov{\cup \left\{ \A_{s} (\{i\}): i \in s \right\} }$.
\end{enumerate}

\end{definition}

\begin{remark}
If the transition maps are all inclusions in a functor $\A: S \rightarrow \cC$, then $\A$ is independent if and only if for every $s, t \in \dom(\A)$, $\A(t) \nonfork_{\A(t \cap u)} \A(u)$.
\end{remark}

\begin{definition}
1. $T$ has \emph{$n$-existence}, equivalently \emph{$n$-amalgamation} if every closed independent $n$-amalgamation problem has an independent solution.

2. $T$ has \emph{$n$-uniqueness} if every closed independent $n$-amalgamation problem $\A$ and any two closed independent solutions $\A'$ and $\A''$ are isomorphic. 

3. $T$ has \emph{$n$-complete amalgamation} if for every $k$ with $2 \leq k \leq n$, $T$ has $k$-existence.
\end{definition}

Note that the existence of non-forking extensions of types and the independence theorem implies that \emph{any} simple theory has both 2- and 3-existence. In addition, stationarity is equivalent to 2-uniqueness property, so any stable theory has $2$-uniqueness.

\begin{proposition}\label{n-uq_characterization}
For a simple theory $T$ and any $n\ge 2$, the following are equivalent:
\begin{enumerate}
\item
$T$ has $n$-uniqueness;
\item
If $\A$ and $\A'$ are closed independent functors from $\P(n)$ to $\cC$ and $\{\sigma_s\mid s\in \Pm(n)\}$ is a system of elementary bijections, $\sigma_s:\A(s)\to \A'(s)$, such that $\sigma_t\circ \A_{s,t} = \A'_{s,t}\circ \sigma_s$ for all $s\subset t \in \Pm(n)$, then the map $\bigcup_{s\subset_{n-1} n} \A'_{s,n}\circ \sigma_s\circ (\A_{s,n})^{-1}$ is an elementary map from a subset of $\A(n)$ to $\A'(n)$.
\end{enumerate}
\end{proposition}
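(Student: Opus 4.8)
The plan is to prove the two implications separately; in both directions the idea is to reduce to the raw definition of $n$-uniqueness by \emph{transporting} one of the two functors along the given system $\{\sigma_s\}$.

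For $(1)\Rightarrow(2)$, suppose we are handed closed independent $\A,\A':\P(n)\to\cC$ and a compatible system $\{\sigma_s : s\in\Pm(n)\}$. I would manufacture out of $\A'$ a \emph{second} solution of the amalgamation problem $\A^- = \A\restriction\Pm(n)$. Concretely, define a functor $\A''$ that agrees with $\A$ on $\Pm(n)$, sets $\A''(n) := \A'(n)$, and uses the twisted transition maps $\A''_{s,n} := \A'_{s,n}\circ\sigma_s$ for $s\subsetneq n$. The naturality hypothesis $\sigma_t\circ\A_{s,t} = \A'_{s,t}\circ\sigma_s$ is exactly what checks functoriality of $\A''$, and since $\sigma_s$ is a bijection one computes $\A''_n(\{i\}) = \A'_n(\{i\})$ and $\A''(n) = \A'(n)$, so $\A''$ inherits closedness and independence from $\A$ below the top and from $\A'$ at the top. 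Now $\A$ and $\A''$ are two closed independent solutions of the same closed independent problem $\A^-$, so $n$-uniqueness supplies an elementary $\tau:\A(n)\to\A'(n)$ with $\tau\circ\A_{s,n} = \A''_{s,n} = \A'_{s,n}\circ\sigma_s$ for every $s\subsetneq n$. Unwinding, the restriction of $\tau$ to each $\A_n(s)$ is exactly $\A'_{s,n}\circ\sigma_s\circ(\A_{s,n})^{-1}$, so the amalgamated map of $(2)$ is simply $\tau$ restricted to $\bigcup_{s\subset_{n-1} n}\A_n(s)$; in particular the pieces automatically agree (they are restrictions of one function) and the map is elementary.

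For $(2)\Rightarrow(1)$, let $\A'$ and $\A''$ be two closed independent solutions of a closed independent problem $\A^-$. Because both restrict to $\A^-$ on $\Pm(n)$ they literally coincide there, so I may invoke $(2)$ with the identity system $\sigma_s := \id_{\A'(s)}$; the compatibility condition is then trivial. Statement $(2)$ hands back an elementary map $\sigma_0 := \bigcup_{s\subset_{n-1} n}\A''_{s,n}\circ(\A'_{s,n})^{-1}$ from $D := \bigcup_{s\subset_{n-1} n}\A'_n(s)$ into $\A''(n)$, which satisfies $\sigma_0\circ\A'_{s,n} = \A''_{s,n}$ first for the codimension-one $s$ and then, by factoring any $s\subsetneq n$ through an $(n-1)$-element superset and using $\A'_{s,t} = \A''_{s,t}$, for all $s\subsetneq n$.

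The one step that genuinely uses a hypothesis beyond formal bookkeeping is the passage from $\sigma_0$, defined only on the union of codimension-one faces, to an honest isomorphism of solutions, and I expect this to be the main obstacle. Here \emph{closedness} is decisive: since $D$ contains every vertex $\A'_n(\{i\})$, one has $\overline{D} = \overline{\bigcup_i\A'_n(\{i\})} = \A'(n)$, and symmetrically the image closes up to $\A''(n)$. Extending the partial elementary map $\sigma_0$ to an automorphism of $\C$ by homogeneity and restricting it to $\A'(n) = \overline{D}$ produces an elementary bijection $\sigma:\A'(n)\to\A''(n)$ which still satisfies $\sigma\circ\A'_{s,n} = \A''_{s,n}$ for all $s\subsetneq n$, i.e.\ an isomorphism of the two solutions, giving $n$-uniqueness. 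The remaining content, verifying that $\A''$ is a legitimate closed independent functor and tracking the transition maps, is routine.
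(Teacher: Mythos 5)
Your proposal is correct and follows essentially the same route as the paper: for $(1)\Rightarrow(2)$ you build the twisted functor $\A''$ agreeing with $\A$ below the top and with $\A''_{s,n}:=\A'_{s,n}\circ\sigma_s$, then apply $n$-uniqueness; for $(2)\Rightarrow(1)$ you take identity components and extend the resulting partial elementary map to the closures. The extra detail you supply (verifying closedness and independence of $\A''$, and using closedness to see that the union of the codimension-one faces generates $\A'(n)$) is exactly the routine bookkeeping the paper leaves implicit.
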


\begin{proof}
Suppose $T$ has $n$-uniqueness, let $\A$, $\A'$ be independent functors, and let $\{\sigma_s\mid s\in \Pm(n)\}$ be a system of elementary bijections satisfying the properties in (2). Let $\A''$ be the functor defined as follows: $\A''$ coincides with $\A$ on $\Pm(n)$, $\A''(n):=\A'(n)$, and the transition maps $\A''_{s,n}$ are given by $\A''_{s,n}:=\A'_{s,n}\circ \sigma_s$. By $n$-uniqueness, there is an elementary bijection $\sigma_n:\A(n)\to \A''(n)$ that commutes with the transition maps. By construction, $\sigma_n$ extends $\bigcup_{s\subset_{n-1} n} \A'_{s,n}\circ \sigma_s\circ (\A_{s,n})^{-1}$, so the latter is an elementary map.

Conversely, suppose (2) holds and consider two functors $\A'$ and $\A''$ that give closed independent solutions to a closed independent amalgamation problem $\A$. Take $\sigma_s:=\id_{\A(s)}$ for all $s\in \Pm(n)$; then the condition (2) gives that $\bigcup_{s\subset_{n-1} n} \A''_{s,n}\circ (\A'_{s,n})^{-1}$ is a partial isomorphism from a subset of $\A'(n)$ to $\A''(n)$; it can be extended to the algebraic (or bounded) closures. This establishes $n$-uniqueness.
\end{proof}

Natural isomorphism of functors is a well-known category theory notion. We state a special case of this notion here.

\begin{definition}
Let $S\subset \P(n)$ be closed under subsets. Two functors $\A$ and $\B$ from $S$ to $\cC$ are \emph{naturally isomorphic} if for all $s\in S$ there are elementary bijections $\sigma_s\colon \A(s)\to \B(s)$ such that the following diagrams commute for all $t\subset s\in S$:
$$
\xymatrix{\A(t)\ar[d]^{\sigma_t} \ar[r]^{\A_{t,s}} & \A(s)\ar[d]^{\sigma_s} \\
\B(t)  \ar[r]_{\B_{t,s}} & \B(s)
}
$$
For each $s\in S$, the map $\sigma_s$ is called the \emph{component of the natural isomorphism $\sigma$ at $s$}.
\end{definition}

\begin{remark}
Proposition~\ref{n-uq_characterization} can be phrased as follows:
a simple theory $T$ has $n$-uniqueness if and only if for any two closed independent functors $\A$ and $\B$ from $\P(n)$ to $\cC$, any natural isomorphism between $\A^-$ and $\B^-$ can be extended to a natural isomorphism between $\A$ and $\B$.
\end{remark}

The following remarks explain why are we need to take arbitrary elementary embeddings as morphisms in the category $\cC$ (rather than allowing only inclusions). First, we state a definition.

\begin{definition}
Let $S$ be a subset of $\P(n)$ closed subsets. A functor $\A:S\to \cC$ is called \emph{untwisted} if for all $s\subset t \in S$ the embeddings $\A_{s,t}$ are inclusions.
\end{definition}

The following easy claims establish that in some cases we do not lose generality by considering untwisted functors, but in other cases the assumption that the functor is untwisted is almost as strong as the assumption of $n$-existence.

\begin{claim}
For each $n>1$, every (independent) functor $\A:\P(n)\to \cC$ is naturally isomorphic to an (independent) untwisted functor $\hat{\A}$.
\end{claim}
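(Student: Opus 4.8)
The plan is to untwist $\A$ by pushing every object into the top object $\A(n)$; the presence of this top object (as opposed to the $\Pm(n)$ case) is exactly what makes the construction work. Concretely, I would set $\hat{\A}(n) := \A(n)$ and, for each $s\subsetneq n$, define $\hat{\A}(s) := \A_n(s) = \A_{s,n}(\A(s))$, the image of $\A(s)$ under the transition map to the top. The transition maps of $\hat{\A}$ are declared to be the set-theoretic inclusions, and the component of the natural isomorphism at $s$ is taken to be $\sigma_s := \A_{s,n}$, corestricted to its image $\hat{\A}(s)$. Since an elementary map is injective, $\sigma_s$ is an elementary bijection onto $\hat{\A}(s)$, and $\sigma_n = \id_{\A(n)}$.

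First I would check that $\hat{\A}$ is a well-defined untwisted functor into $\cC$. The inclusions nest correctly: for $s\subseteq t$, functoriality of $\A$ gives $\A_{s,n} = \A_{t,n}\circ \A_{s,t}$, so $\hat{\A}(s) = \A_{s,n}(\A(s)) \subseteq \A_{t,n}(\A(t)) = \hat{\A}(t)$, and composition of inclusions is automatic, so $\hat{\A}$ is untwisted. The only non-formal point here is that each $\hat{\A}(s)$ is again an object of $\cC$, i.e.\ algebraically (or boundedly) closed; I would prove this by extending the elementary map $\A_{s,n}$ to an automorphism $F$ of $\C$, so that $\hat{\A}(s) = F(\A(s))$ is the image of a closed set under an automorphism of the monster and hence closed.

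Next I would verify the naturality squares. For $t\subseteq s$ the transition map $\hat{\A}_{t,s}$ is an inclusion, so the required identity $\hat{\A}_{t,s}\circ \sigma_t = \sigma_s\circ \A_{t,s}$ reduces to $\A_{t,n} = \A_{s,n}\circ \A_{t,s}$, which is precisely functoriality of $\A$; thus $\sigma = \{\sigma_s\}$ is a natural isomorphism from $\A$ to $\hat{\A}$. Finally, if $\A$ is independent I would show that $\hat{\A}$ is independent as well: using the same automorphism $F$ extending $\sigma_s$, one computes $F(\A_s(\{i\})) = \hat{\A}_s(\{i\})$ and $F(\A_s(\emptyset)) = \hat{\A}_s(\emptyset)$, so $F$ carries the configuration witnessing independence at $s$ for $\A$ onto the corresponding configuration for $\hat{\A}$.

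The entire argument is essentially formal: the commuting diagrams and the nesting of inclusions fall out of functoriality of $\A$. The only real content—and the step I would be most careful about—is the preservation of algebraic/bounded closedness and of independence under the embeddings $\A_{s,n}$. I would settle both uniformly by lifting the relevant elementary maps to automorphisms of $\C$ and invoking the automorphism-invariance of $\cC$-objects and of non-forking independence. I do not anticipate any genuine obstacle here; the lemma is the easy half of the contrast flagged in the surrounding remarks, where the hard direction (untwisting a functor on $\Pm(n)$, with no top object to absorb the twists) is what becomes entangled with $n$-existence.
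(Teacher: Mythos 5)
Your construction is exactly the paper's: set $\hat{\A}(n):=\A(n)$, $\hat{\A}(s):=\A_n(s)$, take inclusions as transition maps, and use $\eta_s=\A_{s,n}$ as the components of the natural isomorphism. The paper states this in two lines; your additional verifications (closedness of the images, the naturality squares, preservation of independence via automorphisms of $\C$) are correct fillings-in of the same argument.
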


\begin{proof}
Indeed, we simply let $\hat{\A}(n):=\A(n)$ and $\hat{\A}(s):=\A_n(s)$. The natural isomorphism is given by $\eta_s=\A_{s,n}$.
If $\A$ is an independent functor, then so is $\hat{\A}$.
\end{proof}

\begin{claim}
Let $n>1$, and consider a closed independent functor $\A:\Pm(n)\to \cC$ (that is, $\A$ is a closed independent amalgamation problem). Then $\A$ is naturally isomorphic to an untwisted functor $\hat{\A}$ if and only if $\A$ has a solution.
\end{claim}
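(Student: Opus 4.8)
The plan is to prove both implications, with the reverse direction being essentially free from the preceding claim and the forward direction reducing to the observation that an untwisted amalgamation problem always amalgamates by taking a union inside the monster model.

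First suppose $\A$ has a solution $\A':\P(n)\to\cC$. I would apply the preceding claim to the functor $\A'$ on the \emph{full} power set $\P(n)$: it produces an untwisted functor $\widehat{\A'}$ with $\widehat{\A'}(s)=\A'_n(s)$ together with a natural isomorphism whose components are $\eta_s=\A'_{s,n}$. Restricting this natural isomorphism to $\Pm(n)$ (the commuting squares for $t\subset s\in\Pm(n)$ are a sub-family of the original ones) yields a natural isomorphism between $\A=\A'\restriction\Pm(n)$ and $\widehat{\A'}\restriction\Pm(n)$; and the latter is untwisted because the restriction of an untwisted functor is untwisted. This settles the ``only if'' direction.

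For the converse, suppose $\sigma$ is a natural isomorphism from $\A$ to an untwisted functor $\hat{\A}\colon\Pm(n)\to\cC$, with components $\sigma_s\colon\A(s)\to\hat{\A}(s)$. I would first note that solutions transport across $\sigma$, so it suffices to solve $\hat{\A}$: given a solution $\hat{\A}'$ of $\hat{\A}$, set $\A'(n):=\hat{\A}'(n)$ and $\A'_{s,n}:=\hat{\A}'_{s,n}\circ\sigma_s$ for $s\subsetneq n$; then the naturality identity $\sigma_t\circ\A_{s,t}=\hat{\A}_{s,t}\circ\sigma_s$ combined with functoriality of $\hat{\A}'$ gives $\A'_{t,n}\circ\A_{s,t}=\hat{\A}'_{t,n}\circ\hat{\A}'_{s,t}\circ\sigma_s=\hat{\A}'_{s,n}\circ\sigma_s=\A'_{s,n}$, so $\A'$ is a functor extending $\A$, i.e.\ a solution. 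It then remains to solve the untwisted problem $\hat{\A}$. Since all its transition maps are honest inclusions, the sets $\{\hat{\A}(s):s\subsetneq n\}$ sit coherently inside $\C$ (respectively $\C^{eq}$ or $\C^{heq}$), and I would define $\hat{\A}'(s):=\hat{\A}(s)$ for $s\subsetneq n$, $\hat{\A}'(n):=\overline{\bigcup_{s\subsetneq n}\hat{\A}(s)}$, with every $\hat{\A}'_{s,n}$ the inclusion. The object $\hat{\A}'(n)$ is algebraically (boundedly) closed by construction, each inclusion is a partial elementary map since it is the identity on its domain, and functoriality $\hat{\A}'_{t,n}\circ\hat{\A}'_{s,t}=\hat{\A}'_{s,n}$ is just transitivity of the inclusions $\hat{\A}(s)\subseteq\hat{\A}(t)\subseteq\hat{\A}'(n)$.

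I do not expect a genuine obstacle here; the only points needing care are bookkeeping ones, namely that restriction preserves untwistedness and the short diagram chase showing solutions transport along $\sigma$. The single conceptual observation doing the real work is that once the problem is untwisted its pieces are already \emph{simultaneously} realized in the monster model, so forming their union and closing it is automatically a solution and no honest amalgamation must be performed. (Note that the ``closed independent'' hypotheses are not actually used, although they are preserved by the constructions and are natural to the amalgamation context.)
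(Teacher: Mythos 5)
Your proof is correct and follows essentially the same route as the paper: the forward direction invokes the preceding claim's construction and restricts to $\Pm(n)$, and the converse takes the closure of $\bigcup_{s\subsetneq n}\hat{\A}(s)$ as the solution (the paper leaves the transport across the natural isomorphism implicit, which you spell out). No gaps.
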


\begin{proof}
If $\A$ has a solution, we can recover an untwisted functor $\hat \A$ naturally isomorphic to $\A$ the same way we did in the preceding claim. For the converse, the algebraic (or bounded) closure of $\bigcup _{s\subsetneq n}\hat{\A}(s)$ provides a solution to the amalgamation problem.
\end{proof}

\section{Witnesses to the failure of $3$-uniqueness in stable theories}



The purpose of this section is to study in detail how $3$-uniqueness can fail \emph{in a stable theory}.  We sharpen some of the results about definable groupoids in a stable theory that fails $3$-uniqueness. In~\cite{GK} it was shown that groupoids arising from the failure of $3$-uniqueness cannot be centerless; we establish here that, under additional assumptions on symmetric witness, the groupoids must in fact be abelian. We also establish a much closer connection between the formula that binds the symmetric witness to failure of 3-uniqueness (a sort of ``groupoid configuration'' used to define the groupoid) and the composition in the resulting groupoid.    Finally, we show that if $(a,b,c)$ is a Morley sequence in a stable theory and  $\widetilde{ab} = \dcl(\ov{ac},\ov{bc})\cap \acl(ab)$, then the group $\Aut(\widetilde{ab}/\ov{a}\,\ov{b})$ is abelian (Theorem~\ref{abelian_automorphism_group}).

%

Let us reiterate that throughout this section, we assume that $T$ is stable.

\subsection{Background}

We begin the section by exploring some properties of definable groupoids that we will need later. We use the terminology of~\cite{GK}. We recall, in an abbreviated form, some of the definitions contained in the first section of that paper.  Throughout this section, we will assume that the theory $T$ is stable.

\begin{definition}
A \emph{groupoid} is a non-empty category in which every morphism is invertible.

A groupoid is \emph{connected} if there is a morphism between any two of its objects.

A connected groupoid $\G$ is called \emph{finitary} if some (equivalently, every) group $G_a:=\mor_{\G}(a,a)$ is finite.

A groupoid $\G$ is \emph{(type-) definable} if the sets $\ob(\G)$ and $\mor(\G)$ are (type-) definable, as well as the  composition operation ``$\circ$'' and the domain, range and identity maps (respectively denoted by $i_0$, $i_1$, and $\textup{id}$).
\end{definition}

Throughout Section 2, all the groupoids are  finitary connected type-definable.

The notion of a symmetric witness to failure of 3-uniqueness was introduced in~\cite{GK}, but we repeat it here.  For the next definition (and the remainder of the section), recall our convention that if $\{a_1, \ldots, a_n\}$ is an $A$-independent set and $s \subseteq n = \{1, \ldots, n\}$, then $$\overline{a}_s = \bdd(A \cup \{a_i : i \in s\}).$$  For ease of notation, we write ``$\overline{a}_{ij}$'' instead of $\overline{a}_{\{i,j\}}$.

\begin{definition}\label{symm_witness}
A \emph{symmetric witness to non-$3$-uniqueness} (in a stable theory) is an independent   sequence $\{a_1, a_2, a_3\}$ of finite tuples over an algebraically closed  set $A$ together with elements $f_{12}$, $f_{23}$, and $f_{13}$ such that:
\begin{enumerate}
\item
$f_{ij} \in \overline{a}_{ij}$;
\item
$f_{12} \notin \dcl(\overline{a}_1 \overline{a}_2)$; and
\item
$a_1a_2f_{12} \equiv_A a_2a_3f_{23} \equiv_A a_1a_3f_{13}$;
\item
there is a formula $\theta(x,y,z)$ over $A$ such that $f_{12}$ is the unique realization of
$\theta(x, f_{23}, f_{13})$, the element $f_{23}$ is the unique realization of $\theta(f_{12}, y, f_{13})$, and
$f_{13}$ is the unique realization of $\theta(f_{12}, f_{23}, z)$.
\end{enumerate}
\end{definition}

The name is explained by the following theorem.

\begin{theorem}[Theorem~2.4 in~\cite{GK}]
If $T$ does not have $3$-uniqueness, then there is a set $A$ and a symmetric witness to non-$3$-uniqueness over $A$.
\end{theorem}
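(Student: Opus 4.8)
The plan is to convert the abstract failure of $3$-uniqueness into the concrete combinatorial picture of an independent triangle carrying incompatible edge data, and then to read off the elements $f_{ij}$ and the formula $\theta$ from that data.

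First I would translate the failure of $3$-uniqueness into the language of Proposition~\ref{n-uq_characterization}. Negating clause (2) for $n=3$ produces closed independent functors $\A,\A'\colon\P(3)\to\cC$ together with elementary bijections $\{\sigma_s\mid s\in\Pm(3)\}$ commuting with the transition maps, whose induced map on the top is not elementary. Using the untwisting claims I may assume both functors are untwisted, so every transition map is an inclusion; since a closed functor satisfies $\A(3)=\ov{\bigcup_i\A_3(\{i\})}$, both top sets are simply $\acl(a_1a_2a_3)$ computed in $\C$ over a common base $A=\acl(A)$ and a common independent triple $(a_1,a_2,a_3)$. Applying a global automorphism of $\C$ I identify the two amalgamation problems, so the data reduce to a single independent triangle together with automorphisms $\sigma_{\{i,j\}}\in\Aut(\ov{a}_{ij}/A)$ of its edges whose union is not elementary. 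Finally, stability enters: because $(a_1,a_2,a_3)$ is independent over the algebraically closed set $A$, stationarity (equivalently $2$-uniqueness) lets me amalgamate the three vertex automorphisms $\sigma_{\{i\}}$ into one global automorphism and absorb them, so that after this adjustment each resulting $\tau_{ij}\in\Aut(\ov{a}_{ij}/\ov{a}_i\,\ov{a}_j)$ fixes the two vertices while $\tau_{12}\cup\tau_{13}\cup\tau_{23}$ is still not elementary. This is precisely the failure of $B(3)$.

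Next I would extract the witness elements. Since each $\tau_{ij}$ is elementary but their union is not, the $\tau_{ij}$ are nontrivial; because $T$ is stable, $\Aut(\ov{a}_{12}/\ov{a}_1\,\ov{a}_2)$ acts on the finite sets of $\acl$-conjugates, so there is a finite canonical parameter $f_{12}\in\ov{a}_{12}$ moved by $\tau_{12}$. By construction $f_{12}\in\ov{a}_{12}$ and $f_{12}\notin\dcl(\ov{a}_1\,\ov{a}_2)$, giving clauses (1) and (2). Transporting $f_{12}$ along the canonical $A$-elementary bijections between the edges supplied by indiscernibility of the Morley sequence $(a_1,a_2,a_3)$, I define $f_{23}$ and $f_{13}$ so that $a_1a_2f_{12}\equiv_A a_2a_3f_{23}\equiv_A a_1a_3f_{13}$, which is clause (3).

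The remaining clause (4), the coherence formula $\theta$, is the main obstacle. Here I must show that the three edge-parameters determine one another over $A$: concretely that $f_{12}$ is the \emph{unique} element of its edge compatible with $f_{23}$ and $f_{13}$, and symmetrically (this is the germ of the groupoid composition $f_{13}=f_{23}\circ f_{12}$). The strategy is first to prove an algebraic version, $f_{12}\in\acl(f_{23}f_{13}A)$, by exploiting the failure of $B(3)$: the non-elementarity of $\bigcup\tau_{ij}$ forces the datum on one edge to be constrained once the data on the other two are fixed, a dependence that turns out to be definable. One then upgrades ``algebraic'' to ``unique/definable'' and makes the three choices simultaneously symmetric by replacing the $f_{ij}$ with canonical parameters of the appropriate finite equivalence relations (the finite equivalence relation theorem), and lets $\theta(x,y,z)$ be the formula asserting that $x$ is this distinguished element determined by $y$ and $z$. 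Verifying that one uniform, rotation-symmetric $\theta$ works with \emph{unique} realizations rather than merely finitely many is where the careful stationarity and canonical-base bookkeeping is needed, and I expect this coherence step, rather than the reductions of the first two paragraphs, to be the real work.
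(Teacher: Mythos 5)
This theorem is quoted from \cite{GK} and the present paper gives no proof of it (only Proposition~\ref{full symm} sketches a modification of the argument), so the comparison below is against the construction that the paper relies on and partially recapitulates there.

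Your first paragraph is fine: in a stable theory the failure of $3$-uniqueness does reduce, via stationarity, to the failure of $B(3)$ over some algebraically closed $A$ and an independent triple. The genuine gap is in how you then select $f_{12}$. You extract ``a finite canonical parameter $f_{12}\in\ov{a}_{12}$ moved by $\tau_{12}$,'' i.e.\ merely an element of $\ov{a}_{12}\setminus\dcl(\ov{a}_1\,\ov{a}_2)$. Such an element need not satisfy clause (4) for \emph{any} choice of $f_{23},f_{13}$: the set $\acl(a_1a_2)\setminus\dcl(\ov{a}_1\,\ov{a}_2)$ is in general much larger than the set $\widetilde{a_1a_2}=\dcl(\ov{a_{13}},\ov{a_{23}})\cap\acl(a_1a_2)$, and only elements of the latter can be bound by a formula $\theta$ over the other two edges. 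The correct move --- and the one the cited proof makes --- is to use the $\dcl$-form of the failure of $B(3)$ directly, choosing $f\in\dcl(\ov{a_{13}},\ov{a_{23}})\cap\ov{a_{12}}\setminus\dcl(\ov{a}_1,\ov{a}_2)$ at the outset. Your proposed repair, ``first prove $f_{12}\in\acl(f_{23}f_{13}A)$ by exploiting non-elementarity of $\bigcup\tau_{ij}$, then upgrade to uniqueness,'' does not work for your $f_{12}$: no such algebraicity is forced for an arbitrary element moved by $\tau_{12}$, and the finite equivalence relation theorem produces class parameters, not the cross-edge interdefinability that clause (4) requires.

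A second, related gap: even starting from the right $f$, your $f_{23}$ and $f_{13}$ are defined only as conjugate copies of $f_{12}$ on the other edges, and there is no reason that $f_{12}$ is then definable from \emph{those particular} tuples. In the actual construction one first writes $f_{12}$ as the unique solution of a formula over specific tuples $d_{23}\in\ov{a_{23}}$ and $e_{13}\in\ov{a_{13}}$, and then symmetrizes by concatenating conjugates of $(f_{12},d_{23},e_{13})$ around the triangle until each edge tuple determines the other two uniquely and a single rotation-symmetric $\theta$ can be written down. You correctly flag this coherence step as ``the real work,'' but the mechanism you propose for it is not the one that succeeds, and the step where the witness element is first located is already off target.
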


In~\cite{GK}, a symmetric witness to failure of 3-uniqueness is used to construct a definable non-eliminable groupoid. It was shown that an automorphism group of a groupoid constructed from a symmetric witness has to have a non-trivial center. In Proposition~\ref{abelian} we show that, for a suitable symmetric witness, the groupoid must in fact be abelian.
To do this, we need to strengthen the properties of a symmetric witness.

For the rest of Section 2, we suppress the related parameter sets to $\emptyset$.

\begin{definition}\label{full_symm_witness}
Let $W=\{a_1,a_2,a_3,f_{12},f_{23},f_{13}\}$ be a symmetric witness. We say that $W$ is a \emph{full symmetric witness} if in addition
\begin{enumerate}
\item[(5)]
$\tp(f_{12}/\ov{a}_1\ov{a}_2)$ is isolated by the type $\tp(f_{12}/a_1a_2)$.
\end{enumerate}
Abusing notation slightly, we say that $(a,f)$ is a (full) symmetric witness if it can be expanded to a (full) symmetric witness $(a, b, c, f, g, h)$.
\end{definition}

It is not difficult to modify the construction of a symmetric witness to obtain a full symmetric witness to failure of 3-uniqueness. In fact, we note that the proof of Theorem~2.4 in~\cite{GK} gives a bit more: any tuple in the set $(\dcl(\ov{a_1a_3}\, \ov{a_2a_3})\cap \ov{a_1a_2})\setminus \dcl(\ov{a}_1\ov{a}_2)$ is a subtuple of some (possibly, more than one) full symmetric witness.

\begin{proposition}\label{full symm}
If $T$ does not have $3$-uniqueness, then there is a set $A$ and a full symmetric witness to non-$3$-uniqueness over $A$.

In fact, if $(a_1, a_2, a_3)$ is the beginning of a Morley sequence and $f$ is any element of $\ov{a_{12}} \cap \dcl(\ov{a_{13}}, \ov{a_{23}})$ which is not in $\dcl(\ov{a_1}, \ov{a_2})$, then there is some full symmetric witness $(a_1, f')$ such that $f \in \dcl(f')$.
\end{proposition}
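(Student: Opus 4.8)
The first sentence follows from the second together with Theorem~2.4 of \cite{GK}. If $T$ fails $3$-uniqueness, that theorem gives a symmetric witness $\{a_1,a_2,a_3,f_{12},f_{23},f_{13}\}$ (over a set we take to be $\emptyset$). The sequence $(a_1,a_2,a_3)$ is an independent sequence of realizations of one type, hence the beginning of a Morley sequence, and clauses (1), (2), (4) of Definition~\ref{symm_witness} put $f_{12}$ in $\ov a_{12}\cap\dcl(\ov a_{13},\ov a_{23})$ but outside $\dcl(\ov a_1\ov a_2)$. So the set named in the second assertion is nonempty, and the second assertion applied to $f=f_{12}$ yields a full symmetric witness. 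It therefore suffices to prove the ``in fact'' clause.

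Given $f$, I would first manufacture the companions by transport. A Morley sequence in a stable theory is a totally indiscernible set, so for each permutation $\pi$ of $\{1,2,3\}$ there is $\pi^\ast\in\Aut(\C)$ with $\pi^\ast(a_i)=a_{\pi(i)}$. Setting $f_{12}:=f$ and letting $f_{23},f_{13}$ be the images of $f_{12}$ under the automorphisms realizing $1\mapsto 2,\,2\mapsto 3$ and the transposition of $2,3$, clause (1) ($f_{ij}\in\ov a_{ij}$) and the symmetry clause (3) hold by construction. Clause (2) needs no separate work: as long as every enlargement replaces $f$ by a tuple $f'$ with $f\in\dcl(f')$, the assumption $f\notin\dcl(\ov a_1\ov a_2)$ gives $f'\notin\dcl(\ov a_1\ov a_2)$ as well.

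For clause (4) I would enlarge $f$. Since $f\in\dcl(\ov a_{13},\ov a_{23})$ there are finite tuples $g\in\ov a_{13}$, $h\in\ov a_{23}$ with $f\in\dcl(g,h)$; pulling $g,h$ back into $\ov a_{12}$ along the transport automorphisms, adjoining them to $f$, re-symmetrizing the companions, and iterating, I reach a finite $f'_{12}\in\ov a_{12}$ with $f\in\dcl(f'_{12})$ whose transported companions $f'_{13},f'_{23}$ satisfy $f'_{12}\in\dcl(f'_{13},f'_{23})$ together with the two rotations of this relation. The iteration halts because all the closures in play are algebraic, hence finite. A single $\theta$ valid in all three rotations is then produced by symmetrizing the defining formula via clause (3), and uniqueness of the realization is secured by conjoining a clause fixing the (finite, algebraic) number of solutions; this is essentially the symmetrization already implicit in the proof of Theorem~2.4 of \cite{GK}.

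The real work is clause (5), the isolation of $\tp(f'_{12}/\ov a_1\ov a_2)$ by $\tp(f'_{12}/a_1a_2)$, which is exactly what ``full'' adds, and I expect it to be the main obstacle. It cannot be met by enlarging $f$ alone: if $f\in\dcl(f')$ then the $a_1a_2$-conjugates of $f'$ surject onto those of $f$, so enlarging $f$ can never merge two $\ov a_1\ov a_2$-conjugacy classes that some $a_1a_2$-conjugate of $f$ already separates. Instead I would fix an algebraic formula $\epsilon(x,\bar b_1,\bar b_2)$, with $\bar b_1\in\ov a_1$ and $\bar b_2\in\ov a_2$, isolating $\tp(f'_{12}/\ov a_1\ov a_2)$, and absorb the parameters $\bar b_1,\bar b_2$ into the data so that they become $\dcl$-definable over the relevant pair. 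This absorption must be carried out compatibly with the symmetry of clauses (1), (3), (4) already arranged, while keeping $\{a_1,a_2,a_3\}$ an independent sequence of realizations of a single type and preserving $f\in\dcl(f')$. Verifying that all of this can be done without disturbing the earlier clauses is the delicate and decisive step.
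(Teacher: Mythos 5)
Your reduction of the first sentence to the ``in fact'' clause is fine, and your diagnosis of clause (5) --- that enlarging $f$ alone can never achieve isolation, so one must instead absorb finitely many parameters from $\ov{a}_1$ and $\ov{a}_2$ into the tuples $a_1,a_2$ --- is exactly the move the paper makes. But there are two genuine gaps. First, your construction of clause (4) by iterated adjoin-and-resymmetrize rests on the claim that ``all the closures in play are algebraic, hence finite''; this is false, since $\acl(a_1a_2)$ is in general infinite, and nothing in your argument forces the loop (choose $g,h$ with $f\in\dcl(g,h)$, transport them to the other edges, adjoin, repeat) to close off after finitely many rounds --- each round can introduce genuinely new elements of $\ov{a}_{12}$ needed to define the previous round's tuple from the other two edges. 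The paper sidesteps this entirely: it does not rebuild the symmetric witness from scratch but invokes the actual construction in the proof of Theorem~2.4 of \cite{GK} (which produces $f_{12}$ in one step, essentially as a canonical parameter, arranged so that the prescribed element $f$ lies in its definable closure), and only describes the modifications needed to upgrade ``symmetric'' to ``full.''

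Second, you explicitly stop short of the step you yourself call ``decisive'': verifying that the isolating parameters $\bar b_1\in\ov{a}_1$, $\bar b_2\in\ov{a}_2$ can be absorbed into $a_1,a_2$ (and, symmetrically, into $a_3$) without damaging clauses (1), (3), (4). This is where the paper's observation does the work: since $f_{12}\in\acl(a_1a_2)$, its type over $\ov{a}_1\ov{a}_2$ is algebraic, hence isolated by a formula whose parameters lie in a \emph{finite} subtuple of $\ov{a}_1\ov{a}_2$; appending those finitely many elements to $a_1$ and $a_2$ leaves the closures $\ov{a}_i$ and $\ov{a}_{ij}$ (hence independence and clauses (1) and (4), whose data live in those closures) unchanged, and the indiscernibility you already invoked lets one append the corresponding elements to $a_3$ so that clause (3) survives. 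As written, your proposal asserts the conclusion of this verification rather than carrying it out.
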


\begin{proof}
We describe the modification of the proof of Theorem~2.4 in~\cite{GK} that gives the full symmetric witness. Throughout this argument, we refer to the notation from that proof.

First of all, we may assume that the element $c_{12}$ from that proof contains the element $f$ in the hypothesis in its definable closure.  Adding finitely many elements from the algebraic closures of $a_i$, $i=1,2,3$, to the tuples $a_i$ if necessary, we may assume that the formulas $\chi_c(x;a_1,a_2)$, $\chi_d(y; a_2,a_3)$, and $\chi_e(z;a_1,a_3)$ isolate the types of $c_{12}$, $d_{23}$, and $e_{13}$ over the algebraic closures of their parameters. That is, $\chi_c(x;a_1,a_2)$ isolates the type $\tp(c_{12}/\ov{a}_1\ov{a}_2)$, and so on.

The rest of the argument remains the same, we construct the tuple $f_{12}$ and add, if necessary, more elements from the algebraic closures of $a_i$ to make sure that the type $\tp(f_{12}/\ov{a}_1\ov{a}_2)$ is isolated by the type $\tp(f_{12}/a_1a_2)$. Then the elements $f_{12}$, $f_{23}$ and $f_{13}$ will form a full symmetric witness.
\end{proof}

\subsection{Abelian automorphism groups}

\begin{proposition}\label{abelian}
Suppose that $\G$ is a definable connected finitary groupoid with at least two distinct objects. Suppose that for some $a\ne b\in \ob(\G)$, for any $f,g\in \mor(a,b)$ we have $\tp(f/\ov{a}\,\ov{b})=\tp(g/\ov{a}\,\ov{b})$. Then the group $G_a$ is abelian for some (equivalently, for all) $a\in \ob(\G)$.
\end{proposition}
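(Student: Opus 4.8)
The plan is to run a binding-group argument: I will show that the hypothesis forces every element of $G_a$ into the center of $G_a$. Since $\G$ is connected, all the binding groups $\mor_\G(x,x)$ are isomorphic to one another (conjugation by any morphism $x\to y$ is a group isomorphism $G_x\to G_y$), so it suffices to treat the single group $G_a$ for the distinguished object $a$ appearing in the hypothesis.

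The first thing I would record is that each binding group sits inside the algebraic closure of its object: since $\G$ is finitary and type-definable, the set $\mor_\G(x,x)=\{h\in\mor(\G): i_0(h)=i_1(h)=x\}$ is finite and definable over $x$, hence $G_x\subseteq\ov{x}$. Consequently any $\pi\in\Aut(\C/\ov{a}\,\ov{b})$ fixes both $a$ and $b$, so it maps the $ab$-definable set $\mor(a,b)$ to itself and fixes $G_a$ and $G_b$ \emph{pointwise}. The hypothesis that all elements of $\mor(a,b)$ realize the same type over $\ov{a}\,\ov{b}$ is precisely the statement that $\Aut(\C/\ov{a}\,\ov{b})$ acts transitively on $\mor(a,b)$.

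Now fix $h_0\in\mor(a,b)$. Every element of $\mor(a,b)$ equals $h_0\circ g$ for a unique $g\in G_a$ (the set $\mor(a,b)$ is a torsor under the right action of $G_a$), so each $\pi\in\Aut(\C/\ov{a}\,\ov{b})$ satisfies $\pi(h_0)=h_0\circ g_\pi$ for a unique $g_\pi\in G_a$, and by transitivity $g_\pi$ ranges over all of $G_a$ as $\pi$ varies. For an arbitrary $g\in G_a$ I would compute $\pi(h_0\circ g)$ in two ways. First, $\pi(h_0\circ g)=\pi(h_0)\circ\pi(g)=h_0\circ g_\pi\circ g$, using that $\pi$ fixes $g\in G_a$. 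Second, writing $h_0\circ g=(h_0\circ g\circ h_0^{-1})\circ h_0$, where $h_0\circ g\circ h_0^{-1}\in G_b$ is fixed by $\pi$, I get $\pi(h_0\circ g)=(h_0\circ g\circ h_0^{-1})\circ\pi(h_0)=(h_0\circ g\circ h_0^{-1})\circ h_0\circ g_\pi=h_0\circ g\circ g_\pi$. Cancelling $h_0$ yields $g_\pi\circ g=g\circ g_\pi$ for every $g\in G_a$, so $g_\pi\in Z(G_a)$; since $g_\pi$ exhausts $G_a$, the group $G_a$ is abelian.

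The routine points needing verification are exactly the two inputs to this computation: that $G_x\subseteq\ov{x}$ (so that automorphisms fixing $\ov{a}\,\ov{b}$ fix the binding groups pointwise), and that $\pi$ commutes with $\circ$ because the groupoid operations are definable over the suppressed parameters. I do not anticipate a real obstacle here; the one place that requires care is the torsor bookkeeping in the last paragraph, namely the passage from ``each $g_\pi$ is central'' to ``$G_a$ is abelian,'' which is what makes transitivity of the automorphism action do the essential work.
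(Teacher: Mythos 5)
Your proof is correct and is essentially the paper's argument run in the opposite (contrapositive-free) direction: both hinge on the facts that $G_a\subseteq\ov{a}$ and $G_b\subseteq\ov{b}$, so that automorphisms over $\ov{a}\,\ov{b}$ fix the vertex groups pointwise, and on the observation that the conjugation map $\tau\mapsto f\circ\tau\circ f^{-1}$ distinguishes $f$ from $f\circ\sigma$ unless $\sigma$ is central. Where the paper picks noncommuting $\sigma,\tau$ and exhibits the resulting type difference between $f$ and $f\circ\sigma$ over $\ov{a}\,\ov{b}$, you let an automorphism realize the assumed type equality and deduce that each $g_\pi$ is central -- the same computation read in the other direction.
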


\begin{proof}
Since the groupoid is connected, all the groups of automorphisms of its objects are isomorphic. Let $a\ne b\in \ob(\G)$ be such that $\tp(f/\ov{a}\,\ov{b})=\tp(g/\ov{a}\,\ov{b})$ for all $f,g\in \mor(a,b)$. Suppose for contradiction that $G_a$ is not abelian. Let $\sigma,\tau\in G_a$ be such that $\sigma\circ \tau\circ \sigma^{-1} \ne \tau$. Choose an arbitrary $f\in \mor(a,b)$ and let $g=f\circ \sigma$. Since the groupoid is finitary, the automorphism group of each object is contained in the algebraic closure of the object; so to reach a contradiction it is enough to show that $\tp(f/aG_abG_b)\ne \tp(g/aG_abG_b)$. The types are indeed distinct since $f\circ\tau\circ f^{-1}$ and $g\circ \tau \circ g^{-1} = f\circ \sigma\circ \tau\circ \sigma^{-1}\circ f^{-1}$ are different elements of $G_b$.
\end{proof}

\begin{corollary}
The groupoid constructed from a full symmetric witness is necessarily abelian.
\end{corollary}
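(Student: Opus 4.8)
The plan is to read the statement as an instance of Proposition~\ref{abelian}: the extra clause~(5) in the definition of a \emph{full} symmetric witness is exactly the hypothesis needed to run that proposition on the groupoid $\G$ attached to $W=\{a_1,a_2,a_3,f_{12},f_{23},f_{13}\}$ in~\cite{GK}. That groupoid is connected, finitary and type-definable, and its objects include realizations of $\tp(a_1)$; since $a_1$ and $a_2$ are independent realizations of a non-algebraic type they are distinct, so $\G$ has at least two objects. Thus it suffices to exhibit two distinct objects $a,b$ of $\G$ for which all morphisms in $\mor_{\G}(a,b)$ have the same type over $\ov{a}\,\ov{b}$, and then Proposition~\ref{abelian} together with the connectedness of $\G$ forces every binding group $G_a$ to be abelian.

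Taking $a=a_1$ and $b=a_2$, I would first recall from the construction in~\cite{GK} that the morphisms in $\mor_{\G}(a_1,a_2)$ are precisely the conjugates of $f_{12}$ over $a_1a_2$ (the set whose size is the order of the binding group, on which $G_{a_1}$ acts simply transitively). In particular every morphism $f\in\mor_{\G}(a_1,a_2)$ satisfies $\tp(f/a_1a_2)=\tp(f_{12}/a_1a_2)$, so any two morphisms in $\mor_{\G}(a_1,a_2)$ have the same type over the pair of objects $a_1a_2$.

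Clause~(5) then supplies the upgrade to the algebraic closures: it asserts that $\tp(f_{12}/a_1a_2)$ isolates $\tp(f_{12}/\ov{a}_1\ov{a}_2)$, i.e. the former has a unique completion over $\ov{a}_1\ov{a}_2$. Hence any realization of $\tp(f_{12}/a_1a_2)$ also realizes $\tp(f_{12}/\ov{a}_1\ov{a}_2)$, and so all $f,g\in\mor_{\G}(a_1,a_2)$ satisfy $\tp(f/\ov{a}_1\ov{a}_2)=\tp(g/\ov{a}_1\ov{a}_2)$. This is exactly the hypothesis of Proposition~\ref{abelian} with $a=a_1$, $b=a_2$, and the corollary follows. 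The only real work is in the first step: one must extract from the~\cite{GK} construction the exact description of $\mor_{\G}(a_1,a_2)$ as a single $\tp(f_{12}/a_1a_2)$-class (checking in particular that the equivalence used to make composition well-defined does not split this type), so that the common type over $a_1a_2$ is genuinely delivered by the construction. Clause~(5) of a full symmetric witness is then precisely engineered to promote sameness of type over $a_1a_2$ to sameness of type over $\ov{a}_1\ov{a}_2$, which is the input Proposition~\ref{abelian} demands.
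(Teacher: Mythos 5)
Your proof is correct and follows essentially the same route as the paper: the paper likewise invokes the definable bijection between $\mor_{\G}(a,b)$ and the realizations of $\tp(f_{12}/a_1a_2)$ to conclude that all morphisms share a type over $\ov{a}\,\ov{b}$, and then applies Proposition~\ref{abelian}. Your write-up is in fact slightly more explicit than the paper's, since you spell out that clause~(5) is what upgrades sameness of type over $a_1a_2$ to sameness over $\ov{a}_1\ov{a}_2$, a step the paper leaves implicit.
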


\begin{proof}
For independent $a,b\in\ob(\G)$, we know that there is a definable bijection between the set $\mor(a,b)$ and the set $\{f\mid (a,b,f)\equiv (a_1,a_2,f_{12})\}$. Thus, all the morphisms have the same type over $\ov{a}\, \ov{b}$. The statement now follows from Proposition~\ref{abelian}.
\end{proof}

Combining the above corollary with Proposition~\ref{full symm}, we get the following.

\begin{corollary}
If $T$ is a stable theory that fails 3-uniqueness, then there is an abelian definable groupoid in $T$.
\end{corollary}

\begin{definition}\label{binding-group}
Suppose that $\G$ is a finitary  abelian groupoid.  Let $\sigma\in G_a$ and $\sigma'\in G_b$ be arbitrary elements. We write $\sigma\sim \sigma'$ if for some (any) $f\in \mor(a,b)$ we have $f\circ \sigma \circ f^{-1}=\sigma'$. Then $\sim$ is a finite equivalence relation on the elements of the automorphism groups of the objects.

We will use the symbol $\bsigma$ to denote the equivalence class of the relation $\sim$. The set of all equivalence classes together with the operation inherited from the groups $G_a$ forms a group isomorphic to each of the groups $G_a$. We use the symbol $G$ to denote the group and call $G$ the \emph{binding group of the groupoid $\G$}.

The group $G$ naturally acts on the set $\mor(\G)$. For $f\in \mor_{\G}(a,b)$, the left action $\bsigma.f $ is given by the composition $\sigma\circ f$, where $\sigma$ is the unique element in $\bsigma\cap G_b$; the right action $f.\bsigma$ is given by $f\circ \sigma$ where $\sigma$ is the unique element in $\bsigma\cap G_a$.
\end{definition}

We collect some properties of the action of $G$ in $\mor(\G)$ in the following claim; all the properties follow immediately from the definitions.

\begin{claim}\label{action}
Let $\G$ be a finitary abelian groupoid and let $G$ be the set of equivalence classes with respect to the relation $\sim$ with the group operation inherited from the groups $G_a$, $a\in \ob(\G)$. Then for all $f\in \mor(a,b)$, $g\in \mor(b,c)$, for all $\bsigma,\btau\in G$ we have
\begin{enumerate}
\item
$\bsigma.f=f.\bsigma$;
\item
$(g\circ f).\bsigma = g\circ (f.\bsigma)$;
\item
$f.(\bsigma\circ \btau)=(f.\bsigma).\btau$.
\end{enumerate}
\end{claim}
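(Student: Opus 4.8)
The statement to prove is Claim~\ref{action}, which asserts three compatibility properties of the binding group action on morphisms of a finitary abelian groupoid. The author explicitly says ``all the properties follow immediately from the definitions,'' so the plan is simply to unwind Definition~\ref{binding-group} carefully, using the abelianness of the groups $G_a$ at the one point where it is needed.

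\begin{proof}[Proof sketch]
The plan is to verify each of the three items directly from the definitions of the left and right actions. Recall that for $f\in\mor(a,b)$ and a class $\bsigma$, the right action $f.\bsigma$ is $f\circ\sigma_a$ where $\sigma_a$ is the unique representative of $\bsigma$ in $G_a$, and the left action $\bsigma.f$ is $\sigma_b\circ f$ where $\sigma_b$ is the unique representative in $G_b$.

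For (1), I would observe that by the definition of $\sim$, the representatives $\sigma_a\in G_a$ and $\sigma_b\in G_b$ of the same class $\bsigma$ satisfy $f\circ\sigma_a\circ f^{-1}=\sigma_b$, i.e.\ $\sigma_b\circ f=f\circ\sigma_a$. The left side is $\bsigma.f$ and the right side is $f.\bsigma$, so they agree. For (2), writing $\bsigma$'s representative in $G_a$ as $\sigma_a$, the right-hand side is $g\circ(f\circ\sigma_a)=(g\circ f)\circ\sigma_a$, which is exactly $(g\circ f).\bsigma$ since $g\circ f\in\mor(a,c)$ and the relevant representative of $\bsigma$ lives in $G_a$; this is just associativity of composition. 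For (3), with $\sigma_a,\tau_a$ the representatives of $\bsigma,\btau$ in $G_a$, the right-hand side $(f.\bsigma).\btau$ equals $(f\circ\sigma_a)\circ\tau_a=f\circ(\sigma_a\circ\tau_a)$; since the class of $\sigma_a\circ\tau_a$ in $G_a$ represents the product $\bsigma\circ\btau$ (the group operation on classes is inherited from each $G_a$), this equals $f.(\bsigma\circ\btau)$.

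There is no real obstacle here, but the one subtlety worth flagging is where abelianness enters. In verifying that the three identities are consistent and well-defined across different objects—and in matching the left and right actions in item (1)—one uses that $\sim$ is compatible with the group structure and that conjugation by a morphism gives a well-defined isomorphism $G_a\to G_b$ sending classes to classes; commutativity of $G_a$ guarantees that this isomorphism does not depend on the choice of $f\in\mor(a,b)$, which is precisely what makes the equivalence classes $\bsigma$ (and hence the actions) well-defined in the first place. Thus the only genuine content is that the well-definedness established in Definition~\ref{binding-group} already encodes everything needed, and the three claimed identities are then immediate reassociations of composition in the groupoid.
\end{proof}
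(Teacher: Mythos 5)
Your verification is correct and matches the paper, which itself gives no argument beyond asserting that all three properties "follow immediately from the definitions"; your unwinding of the left/right actions and the observation that abelianness is what makes the classes $\bsigma$ (and hence the actions) well-defined is exactly the intended content.
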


\subsection{Coherence and multiple composition rules}

We now describe a somewhat surprising connection between the  formula $\theta$ that ``binds'' the symmetric witness and the composition in the definable groupoid constructed from the symmetric witness.
In~\cite{GK}, the construction of a definable non-eliminable groupoid from a symmetric witness to failure of 3-uniqueness uses two notions of composition. One is the auxiliary notion: the unique element $z$ such that $\theta(f_{12}, f_{23}, z)$ holds can be thought of as the ``composition'' of $f_{12}$ with $f_{23}$. The other is the composition in the groupoid defined on the equivalence classes of paths. It is natural to ask whether the two composition notions coincide, and whether a different choice of the elements realizing the same types as $f_{ij}$ would produce a different groupoid.

Let $W=\{a_1, a_2, a_3, f_{12}, f_{23}, f_{13}\}$ be a full symmetric witness to the failure of 3-uniqueness. In this section we show that, (1) for $W$, the two compositions coincide; (2) any choice of elements $f'_{ij}$ such that $\tp(a_ia_jf'_{ij})=\tp(a_1a_2f_{12})$ also gives a symmetric witness, $W'$, to failure of $3$-uniqueness; (3) the groupoid $\G'$ constructed from $W'$ is isomorphic to the groupoid $\G$ constructed from $W$; and (4) the objects and morphisms of $\G$ and $\G'$ are the same, but the composition rules may be different.

\begin{notation}\label{binding-group-action}
Let $W=\{a_1, a_2, a_3, f_{12}, f_{23}, f_{13}\}$ be a full symmetric witness to the failure of 3-uniqueness and let $\G$ be the definable groupoid constructed from $W$. By Lemma~2.14 of~\cite{GK}, there is an $a_ia_j$-definable bijection between the set $\Pi_{ij}:=\{f\mid f\equiv_{a_ia_j} f_{ij}\}$ and the set of morphisms $\mor(a_i,a_j)$. Given $f\models \tp(f_{ij}/a_ia_j)$, we use the symbol $[f]$ to denote the corresponding morphism from $a_i$ to $a_j$ in $\G$.

The bijection allows us to extend the action of $G$ on the set $\mor_{\G}$ to the set $\Pi_{ij}$ in the natural way: we let $f.\bsigma$ be the unique element $g\in \Pi_{ij}$ such that $[g]=[f].\bsigma$. The left action is defined in a similar way.
\end{notation}

\begin{proposition}\label{coherence1}
Suppose that $\G$ is a groupoid constructed from a full symmetric witness in a stable theory (we assume the base set $A=\emptyset$). Suppose that $a,b\in \ob(\G)$ are independent. Then $\models \theta(f_{ab},f_{bc},f_{ac})$ if and only if $[f_{bc}]\circ [f_{ab}]=[f_{ac}]$.
\end{proposition}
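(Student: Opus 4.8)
The plan is to recast the biconditional as the equality of two functions and then pin that equality down at a single witness configuration, propagating it by homogeneity. Since every independent triple of objects is conjugate to the beginning of the Morley sequence, I may take $a=a_1$, $b=a_2$, $c=a_3$. Clause (4) of Definition~\ref{symm_witness} shows that for each pair $(f,g)\in\Pi_{ab}\times\Pi_{bc}$ there is at most one $z$ with $\models\theta(f,g,z)$, and that such a $z$ lies in $\dcl(f,g)$; on the groupoid side, $[g]\circ[f]$ is a single morphism in $\mor(a,c)$ which, via the $ac$-definable bijection of Notation~\ref{binding-group-action}, names a unique element of $\Pi_{ac}$, again in $\dcl(f,g)$. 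Thus the two sides define partial maps $c_\theta,c_\G\colon\Pi_{ab}\times\Pi_{bc}\to\Pi_{ac}$, and the proposition is exactly $c_\theta=c_\G$ (the totality of $c_\theta$ will fall out along the way).

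First I would record that both maps are equivariant for the action of $\Aut(\C/abc)$, which is essentially automatic. The groupoid $\G$, its composition ``$\circ$'', and the bijections $[\cdot]$ are definable over $a,b,c$ (the first two over $\emptyset$, the bijections over the relevant edges), so any $\alpha\in\Aut(\C/abc)$ satisfies $\alpha([f])=[\alpha f]$ and commutes with composition; hence $\alpha(c_\G(f,g))=c_\G(\alpha f,\alpha g)$. Likewise $\theta$ is over $\emptyset$ and $\alpha$ fixes $a$ and $c$ pointwise, so it preserves $\Pi_{ac}$ and $\alpha(c_\theta(f,g))=c_\theta(\alpha f,\alpha g)$ whenever the left side is defined. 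Next I would establish transitivity of $\Aut(\C/abc)$ on $\Pi_{ab}\times\Pi_{bc}$. This is where fullness enters: condition (5) of Definition~\ref{full_symm_witness} gives that $\tp(f_{ab}/ab)$ isolates $\tp(f_{ab}/\ov a\,\ov b)$, and, combined with the independence of $c$ from $ab$ and stationarity over the relevant closed set, this upgrades to $\tp(f_{ab}/ab)\vdash\tp(f_{ab}/abc\,f_{bc})$, so every element of $\Pi_{ab}$ is an $\Aut(\C/abc\,f_{bc})$-conjugate of $f_{ab}$; symmetrically for $\Pi_{bc}$. Together these yield transitivity on the product.

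With equivariance and transitivity in hand, the biconditional reduces to a single identity at the distinguished triple, namely $c_\theta(f_{ab},f_{bc})=c_\G(f_{ab},f_{bc})=f_{ac}$ for the witness elements. Indeed, once this base value is known, choosing $\alpha\in\Aut(\C/abc)$ carrying $(f_{ab},f_{bc})$ to an arbitrary $(f,g)$ gives $c_\theta(f,g)=\alpha(f_{ac})=c_\G(f,g)$ in one stroke, so $c_\theta=c_\G$ everywhere, which is the claim. The $\theta$-half of the base value is immediate: $\models\theta(f_{ab},f_{bc},f_{ac})$ holds by the definition of the symmetric witness. Everything therefore comes down to showing $[f_{bc}]\circ[f_{ab}]=[f_{ac}]$ for the standard generators, i.e.\ that the class of the length-two path $a\xrightarrow{f_{ab}}b\xrightarrow{f_{bc}}c$ equals the length-one morphism named by the $\theta$-composite $f_{ac}$.

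I expect this base-point identification to be the main obstacle. It is precisely the place where the path composition used to build $\G$ in~\cite{GK} must be reconciled with the auxiliary $\theta$-composition, and it cannot be obtained by any soft homogeneity argument: it requires unwinding the defining equivalence relation on paths together with the explicit bijection of Lemma~2.14 of~\cite{GK} and checking that the $\theta$-triangle is exactly one of the basic identifications of two paths. A secondary subtlety is the stationarity step inside the transitivity claim, where one must verify that fullness genuinely promotes the type of $f_{ab}$ over $ab$ to its type over $abc\,f_{bc}$ — in particular that the independent parameter $c$ and the edge element $f_{bc}$ do not split the $abc$-conjugacy class of $f_{ab}$ — since $\ov a\,\ov b$ need not itself be algebraically closed.
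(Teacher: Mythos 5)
Your reduction is sound as far as it goes: both the $\theta$-composition and the groupoid composition induce $\Aut(\C/abc)$-equivariant partial maps $\Pi_{ab}\times\Pi_{bc}\to\Pi_{ac}$, and (granting transitivity of $\Aut(\C/abc)$ on the product, which fullness does deliver with some care about $\ov{a}\,\ov{b}$ not being algebraically closed) the biconditional collapses to the single identity $[f_{bc}]\circ[f_{ab}]=[f_{ac}]$ at the witness triple. The problem is that this identity \emph{is} the proposition --- it is the forward implication at the one configuration where the left-hand side is known to hold --- and you do not prove it. You defer it to ``unwinding the defining equivalence relation on paths'' and ``checking that the $\theta$-triangle is exactly one of the basic identifications of two paths,'' but that is not a routine definition chase: in the construction of \cite{GK} the composite $[f_{bc}]\circ[f_{ab}]$ is computed by passing through a fourth independent object $d$, so a priori it could differ from $[f_{ac}]$ by a fixed nontrivial element of the binding group. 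That this is a genuine coherence condition rather than a formality is underlined by Corollary~\ref{twisted witness}: twisting the witness by $\bsigma$ forces the binding formula to change to $\theta'(x,y,z)=\theta(x.\bsigma^{-1},y,z)$, so the truth of the base identity depends on the precise match between $\theta$ and the chosen witness, not merely on the shape of the construction.

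The paper's proof is devoted entirely to closing exactly this gap. It introduces the discrepancy $[g]\in\mor(b,b)$ defined by $[f_{bc}]\circ[g]\circ[f_{ab}]=[f_{ac}]$, uses the fourth point $d$ through which $[f_{bc}]\circ[f_{ab}]=[f'_{ac}]$ is defined to produce the three triangles $\theta(f_{da},f_{ab},f_{db})$, $\theta(f_{db},f_{bc},f_{dc})$, $\theta(f_{da},f'_{ac},f_{dc})$, converts each into a groupoid equation carrying the \emph{same} twist $[g]$ (using that all $\theta$-triangles over independent pairs are conjugate to the witness triangle and that $[g]$ names an automorphism-invariant class of the abelian binding group), and then right-cancels $[g]\circ[f_{da}]$ to obtain $[f'_{ac}]=[f_{bc}]\circ[g]\circ[f_{ab}]=[f_{ac}]$, forcing $f'_{ac}=f_{ac}$ and hence $[g]=\id$. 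Some argument of this kind --- neither homogeneity nor definition-unwinding --- is what your proposal is missing, and without it the proof is incomplete at its essential step.
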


\begin{proof}
Let $f_{ab}$, $f_{bc}$, and $f_{ac}$ be a full symmetric witness; let $\G$ be the corresponding definable groupoid. Let $[g]$ be the unique element in $\mor(b,b)$ such that $[f_{bc}]\circ [g]\circ [f_{ab}]=[f_{ac}]$; let $f'_{ac}$ be the unique element in the $\acl(ac)$ such that $[f_{bc}]\circ [f_{ab}]=[f'_{ac}]$. We aim to show that $f'_{ac}=f_{ac}$ (this would imply that $[g]$ is the identity and that the first condition holds).

Since $[f_{bc}]\circ [f_{ab}]=[f'_{ac}]$, there are elements $f_{da}$, $f_{db}$, and $f_{dc}$ such that $\theta(f_{da},f_{ab},f_{db})$, $\theta(f_{db},f_{bc},f_{dc})$, and $\theta(f_{da},f'_{ac},f_{dc})$ hold. Therefore, we have
$$
[f_{ab}]\circ [g]\circ [f_{da}]=[f_{db}];\quad
[f_{bc}]\circ [g]\circ [f_{db}]=[f_{dc}];\quad
[f'_{ac}]\circ [g]\circ [f_{da}]=[f_{dc}].
$$
It follows that
$$
f_{dc}=[f'_{ac}]\circ [g]\circ [f_{da}]=[f_{bc}]\circ [g]\circ [f_{ab}]\circ [g]\circ [f_{da}],
$$
hence, using right cancelation in $\G$, we get $[f'_{ac}]=[f_{bc}]\circ [g]\circ [f_{ab}]$. So $f_{ac}=f'_{ac}$.
\end{proof}

The above proposition allows to obtain an interesting property of the formula $\theta$ that binds the symmetric witness.

\begin{corollary}
Suppose that $\G$ is a groupoid constructed from a full symmetric witness. Then for any four independent  $a,b,c,d\in \ob(\G)$, we have
$$
\theta(f_{da},f_{ab},f_{db})\land
 \theta(f_{db},f_{bc},f_{dc})\land
  \theta(f_{da},f_{ac},f_{dc})\to \theta(f_{ab},f_{bc},f_{ac}).
$$
\end{corollary}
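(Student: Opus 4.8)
The plan is to translate every instance of the binding formula $\theta$ appearing in the hypothesis and in the conclusion into an equation about composition of morphisms in $\G$, using the equivalence from Proposition~\ref{coherence1}, and then to verify the resulting identity by a direct computation using only associativity and right cancellation in the groupoid. First I would note that since $a,b,c,d$ are mutually independent, every pair among them is independent, so Proposition~\ref{coherence1} applies to each of the triples occurring below. Reading the general pattern $\theta(f_{pq},f_{qr},f_{pr})$ as the statement $[f_{qr}]\circ[f_{pq}]=[f_{pr}]$, where $[f_{pq}]$ denotes the morphism from $p$ to $q$, the three hypotheses become
$$
[f_{ab}]\circ[f_{da}]=[f_{db}],\qquad
[f_{bc}]\circ[f_{db}]=[f_{dc}],\qquad
[f_{ac}]\circ[f_{da}]=[f_{dc}],
$$
while the desired conclusion $\theta(f_{ab},f_{bc},f_{ac})$ becomes $[f_{bc}]\circ[f_{ab}]=[f_{ac}]$.

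Next I would carry out the computation. Substituting the first equation into the second and using associativity gives $[f_{dc}]=[f_{bc}]\circ[f_{ab}]\circ[f_{da}]$, whereas the third equation directly gives $[f_{dc}]=[f_{ac}]\circ[f_{da}]$. Equating these two expressions for $[f_{dc}]$ and cancelling the invertible morphism $[f_{da}]$ on the right yields $[f_{bc}]\circ[f_{ab}]=[f_{ac}]$. Finally, applying the reverse direction of Proposition~\ref{coherence1} (legitimate because $a$ and $b$ are independent) translates this groupoid identity back into $\theta(f_{ab},f_{bc},f_{ac})$, which is exactly the conclusion of the corollary.

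I do not anticipate any serious obstacle: the argument is essentially the observation that the ``composition via $\theta$'' is forced to be coherent around the tetrahedron on $\{a,b,c,d\}$ precisely because it coincides with genuine composition in $\G$. The only points requiring attention are the index bookkeeping in matching each occurrence of $\theta$ to the correct composition of morphisms, and checking that the independence hypothesis of Proposition~\ref{coherence1} is met for each triple, which holds automatically since the four objects are mutually independent.
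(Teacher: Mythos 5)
Your proposal is correct and is exactly the paper's argument: the paper also proves this corollary by applying Proposition~\ref{coherence1} to each occurrence of $\theta$ and then invoking associativity (and cancellation) of the groupoid composition, merely stating it in one line where you spell out the index bookkeeping. No further comment is needed.
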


\begin{proof}
This follows immediately from Proposition~\ref{coherence1} by associativity of the groupoid composition.
\end{proof}

\begin{corollary}\label{twisted witness}
Suppose that $W=\{a_1, a_2, a_3, f_{12}, f_{23}, f_{13}\}$ is a full symmetric witness to the failure of 3-uniqueness. Then for any $f'_{ij}\in \acl(a_ia_j)$ such that $f_{ij}\equiv_{a_ia_j} f'_{ij}$, the set
$W'=\{a_1, a_2, a_3, f'_{12}, f'_{23}, f'_{13}\}$ is also a full symmetric witness to the failure of 3-uniqueness.
\end{corollary}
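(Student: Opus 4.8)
The plan is to verify directly that $W'=\{a_1,a_2,a_3,f'_{12},f'_{23},f'_{13}\}$ satisfies all five conditions defining a full symmetric witness (conditions (1)--(4) of Definition~\ref{symm_witness} together with condition (5) of Definition~\ref{full_symm_witness}). Conditions (1) and (3) are immediate. For (1), $f'_{ij}\in\acl(a_ia_j)=\ov{a}_{ij}$ by hypothesis. For (3), the assumption $f'_{ij}\equiv_{a_ia_j}f_{ij}$ gives $a_ia_jf'_{ij}\equiv a_ia_jf_{ij}$ for each pair; chaining these through the original condition (3) for $W$, namely $a_1a_2f_{12}\equiv a_2a_3f_{23}\equiv a_1a_3f_{13}$, yields $a_1a_2f'_{12}\equiv a_2a_3f'_{23}\equiv a_1a_3f'_{13}$.

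Conditions (2) and (5) are where fullness of $W$ enters, and they both follow from a single observation: $f'_{12}\equiv_{\ov{a}_1\ov{a}_2}f_{12}$. Indeed, condition (5) for $W$ says $\tp(f_{12}/a_1a_2)$ isolates $\tp(f_{12}/\ov{a}_1\ov{a}_2)$, i.e. $\tp(f_{12}/a_1a_2)\vdash\tp(f_{12}/\ov{a}_1\ov{a}_2)$; since $f'_{12}$ realizes $\tp(f_{12}/a_1a_2)$, it also realizes $\tp(f_{12}/\ov{a}_1\ov{a}_2)$. Hence $\tp(f'_{12}/\ov{a}_1\ov{a}_2)=\tp(f_{12}/\ov{a}_1\ov{a}_2)$ and $\tp(f'_{12}/a_1a_2)=\tp(f_{12}/a_1a_2)$. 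Condition (2) for $W'$ now follows because the former type, having the non-algebraic $f_{12}\notin\dcl(\ov{a}_1\ov{a}_2)$ as a realization, has at least two realizations, so $f'_{12}\notin\dcl(\ov{a}_1\ov{a}_2)$; and condition (5) for $W'$ follows because the isolation assertion is a property of the (now identical) pair of types.

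The remaining condition (4) is the heart of the matter: I must produce a single formula $\theta'$ over the base witnessing that each $f'_{ij}$ is the \emph{unique} realization of $\theta'$ in its coordinate given the other two. The plan is to pass to the definable groupoid $\G$ constructed from $W$ and use the correspondence of Notation~\ref{binding-group-action}: since $f'_{ij}\equiv_{a_ia_j}f_{ij}$, each $f'_{ij}$ lies in $\Pi_{ij}$ and so determines a morphism $[f'_{ij}]$, with $f'_{ij}=f_{ij}.\bsigma_{ij}$ for a unique $\bsigma_{ij}$ in the binding group $G$. Because $G$ is a finite $\emptyset$-definable group and the base is algebraically closed — so that $\acl(\emptyset)=\dcl(\emptyset)$ — each $\bsigma_{ij}$ is a base-definable constant and the $G$-action on $\mor(\G)$ is base-definable. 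Applying Proposition~\ref{coherence1}, which identifies the base-definable $\theta$ with groupoid composition, together with the action identities of Claim~\ref{action}, a short computation shows $[f'_{ac}]=\bigl([f'_{bc}]\circ[f'_{ab}]\bigr).\btau$ for one fixed $\btau\in G$ determined by $\bsigma_{12},\bsigma_{23},\bsigma_{13}$ (independent of the particular morphisms, by commutativity of $G$). The formula $\theta'$ is then obtained by twisting the third argument of $\theta$ by the action of $\btau$, roughly $\theta'(x,y,z):\equiv\exists w\,\bigl(\theta(x,y,w)\wedge z=w.\btau\bigr)$; the single element $\btau$ serves all three coordinates by the cyclic symmetry of the roles of the $\bsigma_{ij}$ and associativity in $G$.

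I expect the main obstacle to be showing that $\theta'$ can really be taken \emph{over the base}, since the twisting relation ``$z=w.\btau$'' is a priori only definable using the parameters $a_i,a_j$ (the bijection $\Pi_{ij}\leftrightarrow\mor(a_i,a_j)$ is $a_ia_j$-definable). Overcoming this is exactly analogous to what Proposition~\ref{coherence1} achieves for composition: one must check that the restriction to $\Pi_{ac}$ of the base-definable $G$-action is captured by a base-definable relation on elements, and that $\btau$ is genuinely in $\dcl(\emptyset)$ (which is where algebraic closedness of the base is used). It is worth noting that the naive shortcut — finding a single automorphism carrying $(f_{12},f_{23},f_{13})$ to $(f'_{12},f'_{23},f'_{13})$ and reusing the original $\theta$ — is bound to fail, since such an automorphism is precisely what $3$-uniqueness would supply; the nontrivial twist $\btau$ measures this failure and is the reason the binding formula, and hence the composition rule, genuinely changes.
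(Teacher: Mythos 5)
Your proof is correct and follows essentially the same route as the paper's: the paper likewise treats conditions (1)--(3) and (5) as immediate and obtains $\theta'$ by twisting $\theta$ by a binding-group element, the only real difference being that it first uses stationarity to reduce to the case $f'_{23}=f_{23}$, $f'_{13}=f_{13}$ so that a single twist $\bsigma$ in the first coordinate suffices (taking $\theta'(x,y,z):=\theta(x.\bsigma^{-1},y,z)$), whereas you combine the three twists $\bsigma_{ij}$ into one $\btau$ acting on the third coordinate. The definability-over-the-base issue you flag for the action ``$z=w.\btau$'' is equally present in the paper's formula $\theta(x.\bsigma^{-1},y,z)$ and is left there as ``routine to check,'' so your treatment is, if anything, more explicit.
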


\begin{proof}
let $W'=\{a_1, a_2, a_3, f'_{12}, f'_{23}, f'_{13}\}$ be a set of elements such that $f_{ij}\equiv _{a_ia_j} f'_{ij}$ for all $1\le i<j\le 3$. By stationarity, there is an automorphism of $\acl(a_1a_2a_3)$ that fixes $a_1a_2a_3$ and sends $f'_{23}f'_{13}$ to $f_{23}f_{13}$. Therefore, we may assume that $W'$ has the form $\{a_1, a_2, a_3, f'_{12}, f_{23}, f_{13}\}$. We need to show that $W$ is a full symmetric witness.

The properties (1)--(3) of a symmetric witness in Definition~\ref{symm_witness} are satisfied for $W'$. The property (5) of a full symmetric witness is immediate. To show that (4) holds as well, we need to construct an appropriate formula $\theta'$ that would ``bind'' $f'_{12}$, $f_{23}$, and $f_{13}$.

Let $\bsigma$ be the unique element of $G$ such that $f'_{12}=f_{12}.\bsigma$. It is routine to check that the formula
$$
\theta' (x,y,z):=\theta(x.\bsigma^{-1},y,z)
$$
is as needed.
\end{proof}

This raises the question: if $W=\{a_1, a_2, a_3, f_{12}, f_{23}, f_{13}\}$ and $W'=\{a_1, a_2, a_3, f'_{12}, f'_{23}, f'_{13}\}$ are two full symmetric witnesses to the failure of $3$-uniqueness, then are the groupoids constructed from $W$ and $W'$ isomorphic?  The answer turns out to be ``yes.''  This follows from results in \cite{Hr} about liaison groupoids of finite internal covers, but we note here that there is a more direct argument. Recall that if $(a_1, a_2, a_3, f_{12}, f_{23}, f_{13})$ is a full symmetric witness, then the symbol $\Pi_{12}$ denotes the set $\{f\mid f\equiv_{a_1a_2} f_{12}\}$.  For ease of notation below, we let ``$\Aut(f_{12} / a_1 a_2)$'' denote the group of all permutations of the set $\Pi_{12}$ induced by elementary bijections of $\overline{a_1 a_1}$ that fix $a_1 \cup a_2$ pointwise.

\begin{proposition}
\label{isomorphic_groupoids}
If $\G$ is the groupoid constructed from a full symmetric witness $(a_1, a_2, a_3, f_{12}, f_{23}, f_{13})$, then $\mor_{\G}(a_1, a_1) \cong \Aut(f_{12} / a_1 a_2)$.  Hence the isomorphism type of the groupoid $\G$ depends only on $\tp(a_1)$ and $\tp(f_{12}/a_1 a_2)$.
\end{proposition}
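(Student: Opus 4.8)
The plan is to realize the asserted isomorphism as the natural action of the binding group on the set $\Pi_{12}$, and to identify $\Aut(f_{12}/a_1a_2)$ with the sharply transitive right–translation action of the vertex group. Throughout I use the $a_1a_2$–definable bijection $[\,\cdot\,]\colon \Pi_{12}\to \mor_{\G}(a_1,a_2)$ of Lemma~2.14 of~\cite{GK}, so that an element of $\Aut(f_{12}/a_1a_2)$ is the same thing as a permutation of $\mor_{\G}(a_1,a_2)$ induced by an elementary bijection of $\ov{a_1a_2}$ fixing $a_1\cup a_2$. Since $\G$ is connected, $\mor_{\G}(a_1,a_2)$ is a torsor under right composition by $G_{a_1}=\mor_{\G}(a_1,a_1)$, so this group acts regularly on $\Pi_{12}$ via the right action $f\mapsto f.\bsigma$ of Notation~\ref{binding-group-action}. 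It therefore suffices to show that $\Aut(f_{12}/a_1a_2)$ is exactly this group of right translations.

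First I would build a homomorphism $\Phi\colon G_{a_1}\to\Aut(f_{12}/a_1a_2)$ sending $\sigma$ to the permutation $f\mapsto f.\bsigma$, where $\bsigma\in G$ is the class of $\sigma$. The key point is that each such translation is induced by an elementary map fixing $a_1\cup a_2$, and this is precisely where fullness enters: by property~(5) of Definition~\ref{full_symm_witness} we have $f_{12}\equiv_{\ov a_1\ov a_2} f_{12}.\bsigma$, so there is $\rho\in\Aut(\C)$ fixing $\ov a_1\ov a_2$ pointwise with $\rho(f_{12})=f_{12}.\bsigma$. As $\rho$ is a functor of the $\emptyset$–definable groupoid fixing $a_1,a_2$ and fixing $G_{a_1}\subseteq\ov a_1$ pointwise, functoriality gives $\rho([f_{12}]\circ\nu)=([f_{12}].\bsigma)\circ\nu$ for every $\nu\in G_{a_1}$; that is, $\rho$ acts on all of $\Pi_{12}$ as right translation by $\bsigma$. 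Hence $\Phi(\sigma)\in\Aut(f_{12}/a_1a_2)$, and $\Phi$ is an injective homomorphism by Claim~\ref{action}(3) together with commutativity of $G$ and freeness of the torsor action.

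The heart of the argument is surjectivity of $\Phi$. Given $\pi\in\Aut(f_{12}/a_1a_2)$, it permutes $\Pi_{12}$ (all of whose elements realize $\tp(f_{12}/a_1a_2)$), so $\pi(f_{12})=f_{12}.\bsigma$ for a unique $\bsigma$; replacing $\pi$ by $\Phi(\sigma)^{-1}\circ\pi$ we may assume $\pi$ fixes $f_{12}$. The whole statement then reduces to the rigidity assertion
$$\Pi_{12}\subseteq\dcl(a_1a_2f_{12}),$$
equivalently that the stabilizer of $f_{12}$ in $\Aut(f_{12}/a_1a_2)$ is trivial, i.e. the action is regular. This is the step I expect to be the main obstacle: an elementary map fixing $a_1,a_2,f_{12}$ could a priori act nontrivially on the finite group $G_{a_1}\subseteq\acl(a_1)$, since it need not fix $\acl(a_1)$ pointwise. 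Concretely such a map sends $f_{12}.\bsigma$ to $f_{12}.\overline{\pi(\sigma)}$, so rigidity is equivalent to $G_{a_1}\subseteq\dcl(a_1a_2f_{12})$: naming the torsor base point $f_{12}$ (which is \emph{not} in $\dcl(a_1a_2)$, by property~(2)) must render the finite binding group rigid over $a_1a_2$. I would establish this from fullness together with stability: fullness makes $\Pi_{12}$ a single orbit over $\ov a_1\ov a_2$, and, $\Pi_{12}$ being a finite torsor with a named base point, stationarity should force the algebraic type of each $f.\bsigma$ over $a_1a_2f_{12}$ to have a unique realization. (This is exactly the regularity of the binding–group action on a connected fibre; alternatively it follows from Hrushovski's analysis of liaison groupoids of finite internal covers in~\cite{Hr}, as remarked before the statement.) Granting rigidity, the reduced $\pi$ is the identity on $\Pi_{12}$, so $\pi=\Phi(\sigma)$ and $\Phi$ is onto; thus $\mor_{\G}(a_1,a_1)\cong\Aut(f_{12}/a_1a_2)$.

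Finally, for the concluding ``Hence'': the group $\Aut(f_{12}/a_1a_2)$ is defined purely from $\Pi_{12}$ and the elementary maps fixing $a_1\cup a_2$, so it depends only on $\tp(a_1)$ and $\tp(f_{12}/a_1a_2)$. Since $\G$ is connected, its isomorphism type as a definable groupoid is determined by the type of its objects—namely $\tp(a_1)$—together with its vertex group and its composition law; the composition is recovered from $\theta$ by Proposition~\ref{coherence1}, and by Corollary~\ref{twisted witness} any two full symmetric witnesses with the same $\tp(a_1a_2f_{12})$ yield isomorphic groupoids. Combining these with the isomorphism just established shows that the isomorphism type of $\G$ depends only on $\tp(a_1)$ and $\tp(f_{12}/a_1a_2)$.
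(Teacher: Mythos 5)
Your proposal is correct and follows essentially the same route as the paper: both exhibit $\Pi_{12}$ as carrying two commuting regular actions --- one by the binding group (via translation) and one by $\Aut(f_{12}/a_1a_2)$ --- and deduce the isomorphism, with the crux in each case being the rigidity fact that $h\in\dcl(g\cup a_1\cup a_2)$ for $g,h\in\Pi_{12}$. The only cosmetic difference is that the paper packages the conclusion as an abstract claim about commuting regular group actions rather than constructing the homomorphism $\Phi$ explicitly, and it asserts the regularity of the $\Aut(f_{12}/a_1a_2)$-action with the same brevity you do.
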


\begin{proof}
First note that since the symmetric witness is full, the groupoid $\G$ is abelian. Let $G$ be the binding group of $\G$ described in Definition~\ref{binding-group}. As we pointed out in Notation~\ref{binding-group-action}, there is a natural left group action of $G$ on the set $\Pi_{12}$. This action is regular.

Also, there is a natural left group action of $\Aut(f_{12}/ a_1 a_2)$ on $\Pi_{12}$: if $\sigma \in \Aut(f_{12} / a_1 a_2)$ and $g \in \Pi_{12}$, just let $\sigma . g = \sigma(g)$.  This action is transitive because any two elements of $\Pi_{12}$ are conjugate over $a_1 \cup a_2$, and the action is regular because for any two $g, h \in \Pi_{12}$, $h \in \dcl(g \cup a_1 \cup a_2)$ (using the fact that $a_1$ and $a_2$ are part of a full symmetric witness).

If $g \in \Pi_{12}$, $\sigma \in \Aut(f_{12} / a_1 a_2)$, and $h \in G$, then $$\sigma . (h . g) = \sigma  (h \circ g) = \sigma(h) \circ \sigma(g)$$ $$= h \circ \sigma(g) = h . (\sigma . g),$$ so the proposition is a consequence of the following elementary fact:

 \begin{claim}
 \label{commuting_group_actions}
 Suppose that $G$ and $H$ are two groups with regular left actions on the same set $X$ (recall: a group action is \emph{regular} if for every pair of points $(x,y)$ from $X$, there is a unique $g \in G$ such that $g . x = y$).  Furthermore, suppose that the actions of $G$ and $H$ commute: that is, for any $g$ in $G$, any $h$ in $H$, and any $x$ in $X$, $g . (h. x) = h. (g .x)$.  Then $G \cong H$.
 \end{claim}



\end{proof}

Our final result of this section is that a certain automorphism group $\Gamma$ which is naturally associated with failures of $3$-uniqueness must be abelian.  First we set some notation.  Suppose that $p$ is a complete type over an algebraically closed set (and without loss of generality, $p \in S(\emptyset)$) and $a$ and $b$ are independent realizations of $p$.  Let $\widetilde{ab}$ denote the set $\dcl(\ov{ac},\ov{bc})\cap \acl(ab)$, where $c\models p$ is independent from $ab$. By stationarity, the set $\widetilde{ab}$ does not depend on the choice of $c$.  Our goal here is to describe the group $\Aut(\widetilde{ab} / \overline{a} \overline{b})$. We think of this group as the group of partial elementary maps from $\widetilde{ab}$ onto itself which fix $\overline{a} \cup \overline{b}$ pointwise.

First note that $\Aut(\widetilde{ab} / \overline{a} \overline{b})$ is nontrivial if and only if there is a witness $(a,b,f)$ to the failure of $3$-uniqueness, where $f \in \widetilde{ab}$.  Let $I:=\{f\in \widetilde{ab}\mid \textrm{$(a, b, f)$ is a symmetric witness}\}$.  We consider elements of $I$ only up to interdefinability.  For each $f \in I$, let $G_f = \Aut(f / \ov{a} \, \ov{b})$; as shown above, $G_f$ is isomorphic to the binding group of the groupoid constructed from $(a,f)$, and so $G_f$ is finite and abelian.  Also note that if $f$ and $g$ are interdefinable, then there is a isomorphism between $G_f$ and $G_g$.  We put a partial order $\leq$ on $I$ by declaring that $f \leq f'$ if and only if $f \in \dcl(f')$.

\begin{claim}
The set $\{G_f\mid f\in I\}$ is a directed system of groups.
\end{claim}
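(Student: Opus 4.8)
The goal is to show that $\{G_f \mid f \in I\}$, with the partial order $f \leq f'$ iff $f \in \dcl(f')$, forms a directed system of groups. There are two things to verify: first, that the index set $(I, \leq)$ is directed, i.e. any two elements $f, g \in I$ have a common upper bound $h \in I$ with $f, g \leq h$; and second, that whenever $f \leq f'$ there is a canonical group homomorphism $G_{f'} \to G_f$ (the connecting maps), and that these are compatible with composition. Let me sketch how I would handle each.

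**Directedness of the index set.**
The plan is to take any $f, g \in I$ and build a single element $h \in \widetilde{ab}$ that codes both. The natural candidate is to let $h$ be a code for the pair $(f,g)$, i.e. an element of $\dcl^{eq}(f,g)$ with $f, g \in \dcl(h)$; in $\C^{eq}$ one can take $h = (f,g)$ directly. The issue is to check that $h \in I$, i.e. that $h \in \widetilde{ab}$ and that $(a,b,h)$ is a symmetric witness. Membership in $\widetilde{ab} = \dcl(\ov{ac},\ov{bc})\cap \acl(ab)$ is immediate since both $f$ and $g$ lie in this $\dcl$-closed and $\acl$-contained set, so any element of $\dcl(f,g)$ does too. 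To see $(a,b,h)$ is a symmetric witness, I would invoke Proposition~\ref{full symm}: since $h \in \ov{a}_{12} \cap \dcl(\ov{a}_{13}, \ov{a}_{23})$, the only nontrivial point is that $h \notin \dcl(\ov{a}\,\ov{b})$. But this holds because $f \in \dcl(h)$ and $f \notin \dcl(\ov{a}\,\ov{b})$ (as $f \in I$ means $(a,b,f)$ is already a symmetric witness, so property (2) of Definition~\ref{symm_witness} holds for $f$). Then Proposition~\ref{full symm} extends $h$ to a full symmetric witness, in particular witnessing $h \in I$, and clearly $f \leq h$ and $g \leq h$.

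**The connecting homomorphisms.**
When $f \leq f'$, so $f \in \dcl(f')$, I would define $\pi_{f',f} \colon G_{f'} \to G_f$ as follows. Each element of $G_{f'} = \Aut(f'/\ov{a}\,\ov{b})$ is realized by an elementary bijection $\sigma$ of $\acl(ab)$ (or $\ov{ab}$) fixing $\ov{a}\,\ov{b}$ pointwise; since $f \in \dcl(f')$, such a $\sigma$ restricts to a well-defined permutation of the realizations of $\tp(f/\ov{a}\,\ov{b})$, giving an element of $G_f$. The map $\pi_{f',f}$ sends the class of $\sigma$ in $G_{f'}$ to its restriction, and this is well-defined and a homomorphism because restriction of automorphisms respects composition and because $f \in \dcl(f')$ guarantees that the action on $f$ is determined by the action on $f'$. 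Functoriality $\pi_{f',f} = \pi_{f'',f}\circ \pi_{f',f''}$ for $f \leq f'' \leq f'$ follows since restriction is transitive. I would remark that these are exactly the restriction maps of automorphism groups, so all the compatibility conditions are routine.

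**Main obstacle.**
The only genuine content, and the step I expect to require the most care, is verifying that the common upper bound $h$ actually lies in $I$ — specifically that $(a,b,h)$ is a \emph{symmetric} witness rather than merely an element of $\widetilde{ab}$. The symmetry condition (3) and the binding formula (4) of Definition~\ref{symm_witness} are not obviously inherited by $\dcl$-combinations, which is why the clean route is to appeal to the second, sharper statement of Proposition~\ref{full symm}: it guarantees that \emph{any} element of $\ov{a_{12}}\cap\dcl(\ov{a_{13}},\ov{a_{23}})$ lying outside $\dcl(\ov{a},\ov{b})$ embeds (via $\dcl$) into a full symmetric witness. Thus the burden reduces entirely to the $\dcl$-membership and the non-triviality $h \notin \dcl(\ov{a}\,\ov{b})$, both of which I dispatched above. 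The homomorphism bookkeeping is then purely formal.
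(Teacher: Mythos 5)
Your proof follows essentially the same route as the paper's: the connecting maps are the restriction/projection maps induced by $f \in \dcl(f')$, and directedness comes from applying Proposition~\ref{full symm} to an element coding the pair $(f,g)$. One small correction: Proposition~\ref{full symm} does not show that the pair-code $h$ itself lies in $I$ --- it produces a full symmetric witness $f'$ with $h \in \dcl(f')$, and it is this $f'$ (not $h$) that serves as the common upper bound of $f$ and $g$ --- but this changes nothing essential.
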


\begin{proof}
Suppose $f_1\in \dcl(f_2)$ (and $f_1,f_2\in I$). Then any automorphism of $\C$ that fixes $f_2$ also fixes $f_1$, so there is a natural projection $\pi_{f_2,f_1}:G_{f_2}\to G_{f_1}$.


For a pair $f,g\in I$, then by Proposition~\ref{full symm}, there is a symmetric witness $h$ such that $(f,g) \in \dcl(h)$.
\end{proof}

Let $\Gamma$ be the inverse limit of the system $\{G_f\mid f\in I\}$.  (In the case where $I = \emptyset$, we let $\Gamma$ be the trivial group.)  Since each $G_f$ is a finite abelian group, the group $\Gamma$ is a profinite abelian group.


\begin{theorem}
\label{abelian_automorphism_group}
With the notation above, the group $\Aut(\widetilde{ab}/\ov{a}\,\ov{b})$ is isomorphic to $\Gamma$, and in particular it is profinite and abelian.
\end{theorem}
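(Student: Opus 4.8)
The plan is to exhibit an explicit isomorphism between $\Aut(\widetilde{ab}/\ov{a}\,\ov{b})$ and the inverse limit $\Gamma$ of the system $\{G_f \mid f \in I\}$. The key observation driving everything is that $\widetilde{ab}$ is, up to interdefinability, the ``union'' of the finite sets generated by the symmetric witnesses $f \in I$: every element of $\widetilde{ab}$ that genuinely moves under $\Aut(\widetilde{ab}/\ov{a}\,\ov{b})$ sits inside some $\dcl(f)$ for $f \in I$, since $\Aut(\widetilde{ab}/\ov{a}\,\ov{b})$ is nontrivial exactly when such a witness exists. So I would first argue that $\widetilde{ab} = \dcl(\{f \mid f \in I\}) \cup \dcl(\ov{a}\,\ov{b} \cap \widetilde{ab})$ in the relevant sense, reducing the action of the full automorphism group to its coordinated actions on the individual $\dcl(f)$.

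First I would define the natural map $\Phi \colon \Aut(\widetilde{ab}/\ov{a}\,\ov{b}) \to \Gamma$. Given $\rho \in \Aut(\widetilde{ab}/\ov{a}\,\ov{b})$ and any $f \in I$, restriction gives $\rho\restriction \acl(f) \cap \widetilde{ab}$; since $f \in \widetilde{ab} \subseteq \acl(ab)$ and $\rho$ fixes $\ov{a}\,\ov{b}$ pointwise, this restriction is an element of $G_f = \Aut(f/\ov{a}\,\ov{b})$. Compatibility with the projections $\pi_{f_2,f_1}$ (for $f_1 \in \dcl(f_2)$) is immediate, since the restriction maps are themselves compatible, so $\Phi(\rho) := (\rho\restriction G_f)_{f \in I}$ is a genuine element of the inverse limit $\Gamma$. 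I then need two things: that $\Phi$ is injective, and that $\Phi$ is surjective.

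Injectivity should follow because any $\rho$ in the kernel fixes every $f \in I$ and hence fixes $\dcl(f)$ pointwise for all $f$; combined with the fact that $\rho$ fixes $\ov{a}\,\ov{b}$ pointwise and that $\widetilde{ab}$ contains no further ``moving'' elements, this forces $\rho = \id$. For surjectivity, given a coherent sequence $(\sigma_f)_{f \in I} \in \Gamma$, I want to patch the $\sigma_f \in G_f$ into a single elementary map on $\widetilde{ab}$. The directedness of the system (the preceding Claim, using Proposition~\ref{full symm} to find, for any $f, g \in I$, a common witness $h$ with $(f,g) \in \dcl(h)$) is exactly what guarantees that the $\sigma_f$ fit together consistently on overlaps: on $\dcl(f) \cap \dcl(g)$ both $\sigma_f$ and $\sigma_g$ restrict to $\sigma_{\dcl(f)\cap\dcl(g)}$ by coherence. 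The union $\bigcup_{f \in I} \sigma_f$ is therefore a well-defined permutation of $\widetilde{ab}$ fixing $\ov{a}\,\ov{b}$; that it is \emph{elementary} is the delicate point, which I would handle by noting that a map is elementary iff its restriction to each finite subset is, and every finite subset of $\widetilde{ab}$ lies in some single $\dcl(f)$ on which the map agrees with the elementary map $\sigma_f$.

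\textbf{The main obstacle} I expect is this last elementarity verification for the patched map: one must be sure that finitely many elements of $\widetilde{ab}$ are always captured by a \emph{single} witness $f \in I$ (so that the directedness genuinely covers all of $\widetilde{ab}$, not just a cofinal part), and that the local-to-global principle for elementary maps applies cleanly. This hinges on the strengthening in Proposition~\ref{full symm} — that every relevant element of $\ov{a_{12}} \cap \dcl(\ov{a_{13}},\ov{a_{23}})$ not already in $\dcl(\ov{a}_1,\ov{a}_2)$ lies in $\dcl(f')$ for some full symmetric witness $f'$ — which ensures $I$ is cofinal in the moving part of $\widetilde{ab}$. Once cofinality is in hand, the profinite and abelian conclusions are automatic: $\Gamma$ is an inverse limit of finite abelian groups (each $G_f$ being finite abelian by the earlier results on binding groups of groupoids from full symmetric witnesses), hence profinite abelian, and the isomorphism transports these properties to $\Aut(\widetilde{ab}/\ov{a}\,\ov{b})$.
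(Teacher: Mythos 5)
Your proposal is correct and follows essentially the same route as the paper: the same restriction map $\sigma \mapsto (\sigma\restriction f)_{f\in I}$, injectivity from the cofinality statement in Proposition~\ref{full symm}, and surjectivity by assembling the coherent family $(\sigma_f)_{f\in I}$ into a single elementary map (your explicit patching via directedness and finite character is just an unwinding of the paper's appeal to compactness). The obstacle you flag --- that finitely many moving elements of $\widetilde{ab}$ must be captured by a single witness --- is exactly what the directedness claim together with Proposition~\ref{full symm} supplies, so no gap remains.
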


\begin{proof}

We define a homomorphism $\varphi:\Aut(\widetilde{ab}/\ov{a}\,\ov{b})\to \Gamma$ by sending $\sigma \in \Aut(\widetilde{ab}/\ov{a}\,\ov{b})$ to the element represented by the function $f\in I \mapsto (\sigma\restriction f) \in G_f$. By Proposition~\ref{full symm}, this embedding is one-to-one.

Finally, suppose that $g \in \Gamma$ is represented by a sequence $\langle \sigma_f : f \in I \rangle$ such that for every $f \in I$, the map $\sigma_f$ is in $\Aut(f / \ov{a} \ov{b})$.  Then it follows from the compactness theorem of first-order logic that there is a $\sigma \in \Aut(\widetilde{ab} / \overline{a} \overline{b})$ such that $\varphi(\sigma) = g$.  Hence $\varphi$ is surjective.

\end{proof}

\section{The boundary property $B(n)$ and amalgamation}

\begin{definition}
\begin{enumerate}

\item Let $T$ be a simple theory, let $A$ be a subset of the monster model of $T$ and let $n\ge 2$. We say that \emph{the property $B(n)$ holds over $A$} if for every $A$-independent set $\{a_0, \ldots, a_{n-1}\}$,
\begin{multline*}
\dcl(\overline{\widehat{a_0} \ldots a_{n-1} A}, \ldots, \overline{a_0 \ldots \widehat{a_{n-2}} a_{n-1} A}) \cap \overline{a_0 \ldots a_{n-2} A}
\\
= \dcl(\overline{\widehat{a_0} \ldots a_{n-2} A}, \ldots, \overline{a_0 \ldots \widehat{a_{n-2}} A}).
\end{multline*}

\item
We say that \emph{$B(n)$ holds for $T$} if $B(n)$ holds over every subset $A$ of the monster model.

\item If $n \geq 2$, $T$ is \emph{$B(n)$-simple} if it satisfies $B(k)$ for every $k$ with $2 \leq k \leq n$.

\end{enumerate}
\end{definition}

\begin{remark}
The $B$ in $B(n)$ stands for ``boundary.''  Note that $B(2)$ just says that if $a_0 \nonfork_A a_1$, then $\bdd(a_0 A) \cap \bdd(a_1 A) = \bdd(A)$, so \emph{every} simple theory is $B(2)$-simple.
\end{remark}

The next lemma generalizes the first part of Lemma~3.2 from \cite{Hr}.

\begin{lemma}
\label{B_n}
In any simple theory $T$, for any set $B$ and any $n \geq 3$, the following are equivalent:

\begin{enumerate}
\item $T$ has $B(n)$ over $B$.
\item For any $A$-independent set $\{a_0, \ldots, a_{n-1}\}$ and any $c \in \overline{a}_{\{0, \ldots, n-2\}}$,
$$
\tp(c / \overline{a}_{\{\widehat{0}, \ldots, n-2\}} \ldots \ov{a}_{\{0, \ldots, \widehat{n-2}\}}) \vdash \tp(c / \ov{a}_{\{\widehat{0}, \ldots, n-1\}} \ldots \ov{a}_{\{0, \ldots, \widehat{n-2}, n-1\}}).
$$

\item For any $A$-independent set $\{a_0, \ldots, a_{n-1}\}$ and any map $\sigma$ such that $$\sigma \in \Aut(\ov{a}_{\{0 \ldots n-2\}} / \overline{a}_{\{\widehat{0}, \ldots, n-2\}} \cup \ldots \cup \ov{a}_{\{0, \ldots, \widehat{n-2}\}}),$$ $\sigma$ can be extended to $\ov{\sigma} \in \Aut(\C)$ which fixes $$\ov{a}_{\{\widehat{0}, \ldots, n-1\}} \cup \ldots \cup \ov{a}_{\{0, \ldots, \widehat{n-2}, n-1\}}$$ pointwise.
\end{enumerate}
\end{lemma}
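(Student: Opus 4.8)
The plan is to prove the cycle $(3)\Rightarrow(1)$, $(1)\Rightarrow(2)$, $(2)\Leftrightarrow(3)$, working throughout in $\C^{heq}$ with bounded closures. Write $M:=\ov{a}_{\{0,\dots,n-2\}}$ for the ``top edge'', let $E_0:=\bigcup_{i\le n-2}\ov{a}_{\{0,\dots,n-2\}\setminus\{i\}}$ be the union of the closures of its $(n-2)$-element subfaces (the base appearing in (2) and (3)), and let $F_0:=\bigcup_{i\le n-2}\ov{a}_{(\{0,\dots,n-2\}\setminus\{i\})\cup\{n-1\}}$ be the union of the corresponding faces that also contain the index $n-1$. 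The single observation that drives everything is structural. Since $n\ge 3$, the set $\{0,\dots,n-2\}$ has at least two elements, so every generator $a_j$ with $j\le n-2$ already occurs in some subface $\{0,\dots,n-2\}\setminus\{i\}$ (take any $i\ne j$); hence $E_0$ contains $\{a_0,\dots,a_{n-2}\}$ together with the base and lies inside $M$, and therefore $\bdd(E_0)=M$. Consequently $M\subseteq\bdd(E_0)\subseteq\bdd(F_0)$, so \emph{every} element of $M$ is bounded over both $E_0$ and $F_0$. This collapses the statement to a question about bounded orbits and their codes, and (perhaps surprisingly) no appeal to the independence theorem is needed.

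For $(3)\Rightarrow(1)$ only the inclusion $\dcl(F_0)\cap M\subseteq\dcl(E_0)$ needs proof, the reverse being immediate. I would take $c\in\dcl(F_0)\cap M$ and an arbitrary $\rho\in\Aut(\C)$ fixing $E_0$ pointwise; since $\rho$ fixes each $a_j$ ($j\le n-2$) it maps $M$ onto itself, so $\rho\restriction M\in\Aut(M/E_0)$. By (3) this extends to $\ov{\rho}\in\Aut(\C)$ fixing $F_0$, hence fixing $\dcl(F_0)\ni c$; as $c\in M$ we get $\rho(c)=\ov{\rho}(c)=c$, so $c$ is fixed by every automorphism fixing $E_0$, i.e.\ $c\in\dcl(E_0)$. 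The equivalence $(2)\Leftrightarrow(3)$ is the routine duality between type-implication and extension of partial elementary maps. Given $\sigma\in\Aut(M/E_0)$, I enumerate $M$ as $\bar m$; by finite character it suffices to get $\sigma(\bar m_0)\equiv_{F_0}\bar m_0$ for each finite subtuple $\bar m_0$, and since $\bar m_0$ is coded by a single element of $M=\bdd(E_0)$ in $\C^{heq}$, this is exactly (2). The witnessing automorphism fixes $F_0$ and agrees with $\sigma$ on $M$ (again using that maps fixing $E_0$ fix the $a_j$ and preserve $M$). Conversely, for $c\in M$ and $c'\models\tp(c/E_0)$ I conjugate $c$ to $c'$ by some $\rho$ fixing $E_0$, restrict $\rho$ to an automorphism of $M$, extend it by (3) to fix $F_0$, and read off $c'\equiv_{F_0}c$.

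The substantive direction is $(1)\Rightarrow(2)$, and the orbit picture makes it short. Fix $c\in M$ and let $C$ be its orbit under $\Aut(\C/E_0)$; these automorphisms fix the $a_j$ and preserve $M$, so $C\subseteq M$, and $C$ is bounded because $c\in M=\bdd(E_0)$. The realizations of $\tp(c/E_0)$ form $C$, and those of $\tp(c/F_0)$ form the orbit $C_1$ of $c$ under $\Aut(\C/F_0)\subseteq\Aut(\C/E_0)$, so $C_1\subseteq C$ and (2) amounts to $C_1=C$. Suppose $C_1\subsetneq C$. The canonical parameter $e$ of the bounded orbit $C_1$ is fixed by every automorphism fixing $F_0$, so $e\in\dcl(F_0)$; and $e\in\dcl(C_1)\subseteq M$ because $C_1\subseteq M$ and $M$ is boundedly (hence definably) closed. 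Thus $e\in\dcl(F_0)\cap M$, which equals $\dcl(E_0)$ by (1), so $e$ is fixed by $\Aut(\C/E_0)$ and $C_1$ is invariant under this group; but $c\in C_1$ while $\Aut(\C/E_0)$ acts transitively on $C\supsetneq C_1$, a contradiction. Hence $C_1=C$, which is (2). The only real care needed is the bookkeeping with codes of bounded orbits in $\C^{heq}$ (ensuring $e$ genuinely lands in $\dcl(F_0)\cap M$) and keeping the omitted-index quantifiers straight; the conceptual content is entirely the identity $\bdd(E_0)=M$, which is what lets a transitivity-of-group-action argument stand in for any amalgamation.
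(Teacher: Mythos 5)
Your proof is correct and uses essentially the same ideas as the paper's: the key step $(1)\Rightarrow(2)$ is the paper's argument verbatim in orbit language (code the bounded solution set of $\tp(c/F_0)$, observe it lies in $\dcl(F_0)\cap\overline{a}_{\{0,\dots,n-2\}}$, and apply $B(n)$), and $(2)\Leftrightarrow(3)$ is the same type/automorphism duality the paper invokes. The only cosmetic difference is that you close the cycle via $(3)\Rightarrow(1)$ rather than the paper's $(2)\Rightarrow(1)$, which is an equivalent reformulation of the same observation.
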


\begin{proof}

Without loss of generality, $A = \emptyset$.

(1) $\Rightarrow$ (2): Suppose that $c \in \overline{a}_{\{0, \ldots, n-2\}}$, and let $X$ be the solution set of $\tp(c / \ov{a}_{\{\widehat{0}, \ldots, n-1\}} \ldots \ov{a}_{\{0, \ldots, \widehat{n-2}, n-1\}})$.  Since $X$ is of bounded size, there is a code for $X$ in $\ov{a}_{\{0, \ldots, n-2\}}$, and also $X \in \dcl(\ov{a}_{\{\widehat{0}, \ldots, n-1\}}, \ldots, \ov{a}_{\{0, \ldots, \widehat{n-2}, n-1\}})$.  So by $B(n)$, $X \in \dcl(\overline{a}_{\{\widehat{0}, \ldots, n-2\}}, \ldots, \ov{a}_{\{0, \ldots, \widehat{n-2}\}})$, and (2) follows.

(2) $\Rightarrow$ (1): If $c \in \ov{a}_{\{0, \ldots, n-2\}} \cap \dcl(\ov{a}_{\{\widehat{0}, \ldots, n-1\}}, \ldots, \ov{a}_{\{0, \ldots, \widehat{n-2}, n-1\}})$, then by (2), $\tp(c / \overline{a}_{\{\widehat{0}, \ldots, n-2\}} \ldots \ov{a}_{\{0, \ldots, \widehat{n-2}\}})$ has a unique solution, that is, $c \in \dcl(\overline{a}_{\{\widehat{0}, \ldots, n-2\}}, \ldots, \ov{a}_{\{0, \ldots, \widehat{n-2}\}})$.

(2) $\Rightarrow$ (3): (2) implies that any such $\sigma$ can be extended to an elementary permutation of $\ov{a}_{\{0, \ldots, n-1\}}$ which fixes $\ov{a}_{\{\widehat{0}, \ldots, n-1\}} \cup \ldots \cup \ov{a}_{\{0, \ldots, \widehat{n-2}, n-1\}}$ pointwise, and from there use saturation of $\C$ to extend to $\ov{\sigma} \in \Aut(\C)$.

(3) $\Rightarrow$ (2) is immediate.

\end{proof}

\begin{definition}
If $A\subset B\subset \C$, by an \emph{automorphism of $B$ over $A$} we mean an elementary map from $B$ onto itself which fixes $A$ pointwise.
\end{definition}

\begin{definition}
\label{relative_uniqueness}
Let $B$ be a set.  We say that \emph{relative $n$-uniqueness holds over $B$} if for every $B$-independent collection of boundedly  closed sets $a_0,\dots,a_{n-1}$ and for any set $\{\sigma_u \mid u \subset_{n-1} n\}$ such that:
\begin{enumerate}
\item
$\sigma_u$ is an automorphism of $\overline{a}_u$; and
\item
For all $v\subsetneq u$, $\sigma_u$ is an identity on $\overline{a}_v$,
\end{enumerate}
we have that $\bigcup _{u} \sigma_{u}$ is an elementary map (i.e., can be extended to the automorphism of $\C$).

A theory $T$ has the \emph{relative $n$-uniqueness property} if it has the relative $n$-uniqueness property over any set $B$.
\end{definition}

\begin{remark}
\label{st.rel2}
Relative $2$-uniqueness says: if $a_0$ and $a_1$ are boundedly  closed sets containing $B$ such that $a_0 \nonfork_B a_1$, and for $i = 0, 1$, $\sigma_i \in \Aut(a_i / \bdd(B))$, then $\sigma_0 \cup \sigma_1$ is an elementary map.  So if $T$ is stable, then the stationarity of strong types implies that $T$ has relative $2$-uniqueness.

\end{remark}

In the stable case, relative $2$-uniqueness was used substantially in the groupoid construction in \cite{GK}, so if we want to generalize these results simple unstable case, we will need to get around the failure of relative $2$-uniqueness somehow.  In fact, it turns out that relative $2$-uniqueness is the same as stability, as we will show next.

First, some notation: we say that $p=\tp(a/A)$ is Lascar strong type if it is an amalgamation base, equivalently for any $b\models p$,
$a\equiv_A^Lb$ holds; or $a\equiv_{\bdd(A)}b$ holds; or $p\vdash \tp(a/\bdd(A))$.

The next fact is folklore, and is not hard to establish directly from the definitions, so we omit the proof.

\begin{fact}
\label{dcl}
 Assume $d$, $b$  are given (possibly infinite) tuples.  The following are equivalent:
\begin{enumerate}
\item
$\tp(d/b)$ is a Lascar strong type.

\item For any $f\equiv_bf'$ with $f,f'\in\bar b$, we have
$f\equiv_{db}f'$.
\item
For any $f\in \bdd(b)-\dcl(b)$, we have $f\notin\dcl(db)$.

\item $\bdd(b)-\dcl(b)\subseteq \bdd(db)-\dcl(db)$.

\item $ \dcl(db)\cap \bdd(b)\subseteq \dcl(b)$.

\item $ \dcl(db)\cap \bdd(b)= \dcl(b)$.

\end{enumerate}
\end{fact}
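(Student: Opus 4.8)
The plan is to split the six conditions into a routine block and a substantive core. First I would record that conditions (3)--(6) are mere reformulations of one another. Since $b\subseteq db$ and $\dcl(b)\subseteq\bdd(b)$, the inclusions $\dcl(b)\subseteq\dcl(db)\cap\bdd(b)$ and $\bdd(b)\subseteq\bdd(db)$ hold automatically; hence (5) $\Leftrightarrow$ (6) (the nontrivial inclusion is the one in (5)), while (4) asserts exactly that every $f\in\bdd(b)\setminus\dcl(b)$ lies in $\bdd(db)\setminus\dcl(db)$, and because $\bdd(b)\subseteq\bdd(db)$ the only content there is $f\notin\dcl(db)$, which is also the contrapositive reading of (5). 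Thus (3), (4), (5), (6) are equivalent by inspection, and the real work is to connect this block to (1) and (2), which I would do by establishing $(1)\Leftrightarrow(2)$ and $(2)\Leftrightarrow(3)$.

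For the automorphism-theoretic equivalence $(1)\Leftrightarrow(2)$ I would argue as follows, recalling that by the definition preceding the statement (1) reads ``$d\equiv_b d'$ implies $d\equiv_{\bdd(b)}d'$''. For $(1)\Rightarrow(2)$, given $f,f'\in\bdd(b)$ with $f\equiv_b f'$, choose $\phi\in\Aut(\C/b)$ with $\phi(f)=f'$; since $\phi(d)\equiv_b d$, condition (1) yields $\psi\in\Aut(\C/\bdd(b))$ with $\psi\phi(d)=d$, so $\psi\phi$ fixes $db$ and sends $f\mapsto\psi(f')=f'$ (as $f'\in\bdd(b)$), giving $f\equiv_{db}f'$. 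For $(2)\Rightarrow(1)$, fix $\phi\in\Aut(\C/b)$ with $\phi(d)=d'$; it suffices to find $\rho\in\Aut(\C)$ fixing $d$ (and $b$) that agrees with $\phi^{-1}$ on all of $\bdd(b)$, for then $\sigma:=\phi\rho$ fixes $\bdd(b)$ pointwise and sends $d\mapsto d'$, which is exactly (1). Such a $\rho$ is produced by compactness: for any finite tuple from $\bdd(b)$, code it as a single hyperimaginary $e\in\bdd(b)$; then $\phi^{-1}(e)\equiv_b e$, so (2) gives $e\equiv_{db}\phi^{-1}(e)$, i.e. every finite restriction of the intended map is partial elementary over $db$, and saturation of $\C$ realizes the full map.

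For $(2)\Leftrightarrow(3)$ the forward direction is easy: if $f\in\bdd(b)\setminus\dcl(b)$, then $f$ has a $b$-conjugate $f'\neq f$ in $\bdd(b)$, and (2) makes $f'$ a $db$-conjugate of $f$, so the $db$-orbit of $f$ is nontrivial and $f\notin\dcl(db)$. The reverse $(3)\Rightarrow(2)$ is the one place I expect genuine content: assuming $(2)$ fails, there are $f\equiv_b f'$ in $\bdd(b)$ with $f\not\equiv_{db}f'$, so the single $b$-orbit of $f$ splits into at least two $db$-orbits; I would then let $e$ be the hyperimaginary coding the $db$-orbit of $f$. Since this orbit is a bounded set of elements of $\bdd(b)$ permuted with bounded orbit by $\Aut(\C/b)$, we get $e\in\bdd(b)$, and $e\in\dcl(db)$ by construction; but $e\notin\dcl(b)$ because $\Aut(\C/b)$ acts transitively on the $b$-orbit and cannot fix setwise the proper nonempty sub-orbit $e$ codes. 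This $e$ contradicts (3).

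The main obstacle, and the only step that is more than bookkeeping, is the passage from single-element statements to the simultaneous/tuple statements: coding an arbitrary finite tuple of $\bdd(b)$ as one hyperimaginary in $\bdd(b)$ (needed for the compactness step in $(2)\Rightarrow(1)$) and coding a finite $db$-orbit of hyperimaginaries as an element of $\bdd(b)\setminus\dcl(b)$ (needed for $(3)\Rightarrow(2)$). Both rest on the standard behavior of bounded closure and of finite orbits of hyperimaginaries in $\C^{heq}$ together with the Galois characterizations $f\in\dcl(X)\Leftrightarrow f$ is fixed by $\Aut(\C/X)$ and $f\in\bdd(X)\Leftrightarrow f$ has bounded $\Aut(\C/X)$-orbit; once these are invoked, every implication reduces to the short automorphism manipulations above, which is consistent with the statement being folklore.
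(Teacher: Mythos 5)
The paper gives no proof of this fact --- it is explicitly declared folklore and omitted --- so there is nothing to compare against; judged on its own, your argument is correct and is the standard one: $(3)$--$(6)$ by inspection, $(1)\Leftrightarrow(2)$ by the automorphism manipulation, and $(2)\Leftrightarrow(3)$ via orbits. The only step with real content is the one you flag, namely coding a bounded $\Aut(\mathfrak{C}/db)$-orbit (equivalently, working with bounded $b$-type-definable equivalence relations) as a hyperimaginary in $\bdd(b)\cap\dcl(db)\setminus\dcl(b)$ for $(3)\Rightarrow(2)$; note also that the converse direction $(2)\Rightarrow(5)$ admits an even shorter route (if $f\in\dcl(db)$ then $f$ is the unique realization of $\tp(f/db)$, so by $(2)$ every $b$-conjugate of $f$ equals $f$), though your version via $(2)\Rightarrow(3)$ is equally valid.
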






Due to Fact~\ref{dcl}, the property $B(3)$ (over $\emptyset$) is equivalent to the statement that
for independent $\{a,b,c\}$, $\tp(\ov{ac}, \ov{bc}/\bar a\bar b)$ is a Lascar strong type.

\begin{lemma}
The following are equivalent for a simple theory $T$:

\begin{enumerate}
\item[(a)]
Relative 2-uniqueness.
\item[(b)]
For any $B$-independent $\{a,b\}$, we
have $$\tp(\ov{bB}/\bdd(B)b)\vdash \tp(\ov{bB}/\bdd(Ba)b).$$
\item[(c)]
Any nonforking extension of a Lascar strong type is a Lascar strong type.
\item[(d)] For $a\ind_B b$ with $B=\bdd(B)$, we have
$\dcl(\bdd(Ba), b)\cap \bdd(Bb)= \dcl(Bb)$.
\item[(e)]
 $T$ is stable.
\end{enumerate}

\end{lemma}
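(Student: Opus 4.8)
The plan is to establish the five conditions equivalent through the cycle $(e)\Rightarrow(a)\Rightarrow(b)\Rightarrow(d)\Rightarrow(c)\Rightarrow(e)$. The first arrow is already available: Remark~\ref{st.rel2} records that stationarity of strong types in a stable theory yields relative $2$-uniqueness. The arrows among (b), (c), (d) are bookkeeping once one reads each of them through the equivalent characterizations of ``Lascar strong type'' collected in Fact~\ref{dcl}, so the serious work is concentrated entirely in closing the cycle back to stability.

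For $(a)\Rightarrow(b)$ I would rewrite (b) as a statement about automorphisms: $\tp(\ov{bB}/\bdd(B)b)\vdash\tp(\ov{bB}/\bdd(Ba)b)$ says exactly that every elementary permutation $\tau$ of $\bdd(Bb)$ fixing $\bdd(B)\cup\{b\}$ pointwise remains elementary after adjoining the identity on $\bdd(Ba)$. Such a $\tau$ fixes $\bdd(B)$, so relative $2$-uniqueness applied to the independent pair $\bdd(Ba),\bdd(Bb)$ with $\sigma_0=\id$ on $\bdd(Ba)$ and $\sigma_1=\tau$ delivers precisely this. For $(b)\Rightarrow(d)$, the base in (d) is boundedly closed, so $\bdd(B)=B$ and (b) becomes: for $f,f'\in\bdd(Bb)$ with $f\equiv_{Bb}f'$ one has $f\equiv_{\bdd(Ba)b}f'$. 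By clause (2) of Fact~\ref{dcl}, with $\bdd(Ba)$ and $Bb$ in the roles of $d$ and $b$, this is exactly the assertion that $\tp(\bdd(Ba)/Bb)$ is a Lascar strong type, and clause (6) then turns this into the displayed $\dcl$-equality of (d). Finally $(c)\Leftrightarrow(d)$ is a direct application of Fact~\ref{dcl}: applying (d) with $d$ in place of $a$ shows that the nonforking extension $\tp(d/Bb)$ of a Lascar strong $\tp(d/B)$ satisfies clause (6), hence is Lascar strong, which is $(d)\Rightarrow(c)$; and since $\tp(\bdd(Ba)/B)$ is a type over a boundedly closed set (so Lascar strong) whose nonforking extension to $Bb$ is forced to be Lascar strong by (c), clause (6) returns (d).

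The decisive step is $(c)\Rightarrow(e)$, which I would argue contrapositively: assuming $T$ unstable, I must exhibit a Lascar strong type with a nonforking extension that is not Lascar strong. The guiding observation is that, via Fact~\ref{dcl}, failure of Lascar-strongness of $\tp(d/Bb)$ is equivalent to $\tp(d/Bb)$ having more than one completion to $\bdd(Bb)$ --- a genuine bounded hyperimaginary of $\bdd(Bb)$ that is pinned down by $d$ but has several conjugates over $Bb$. Hence (c) amounts to the requirement that nonforking extensions of Lascar strong types never acquire new bounded hyperimaginaries, a stationarity-type property; reducing this to genuine stability, and deriving its failure from instability, is the crux. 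I would close by producing, from a formula with the order property, a type over a model (automatically a Lascar strong type) whose nonforking extension to a suitable one-point expansion acquires such an extra hyperimaginary.

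I expect this last paragraph to be the main obstacle, and two points need care. First, one must check that the ``which completion'' datum is really an element of $\bdd(Bb)\setminus\dcl(Bb)$ lying in $\dcl(\bdd(Ba),b)$, i.e.\ that non-uniqueness of the nonforking extension is faithfully recorded as a failure of the $\dcl$-equality in (d); this is where the translation through Fact~\ref{dcl} must be carried out precisely rather than heuristically. Second, and more fundamentally, one must extract honest non-stationarity from instability: the order property must be used to build an indiscernible configuration in which a single type over a boundedly closed set splits into two distinct nonforking extensions, and it is the asymmetry of the ordering formula that prevents these extensions from being conjugated by an automorphism fixing the base. This is the same asymmetry that obstructs relative $2$-uniqueness directly, so an alternative would be to prove $\neg(e)\Rightarrow\neg(a)$ by constructing two independent boundedly closed sets together with an internal symmetry of one of them that cannot be reconciled with the cross-configuration to the other; arranging a genuine internal symmetry while keeping that cross-configuration asymmetric is itself the delicate point of that route.
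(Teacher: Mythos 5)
Your reduction of the lemma to the single implication ``instability implies failure of one of (a)--(d)'' is sound, and the bookkeeping among (a), (b), (c), (d) via Fact~\ref{dcl} is essentially the paper's own argument, just traversed in a different order (the paper runs (a)$\Rightarrow$(b)$\Rightarrow$(c)$\Rightarrow$(d)$\Rightarrow$(e)$\Rightarrow$(a); your detour through (d) before (c) changes nothing of substance). The problem is that the decisive step is not actually proved: your last two paragraphs describe what a witness to the failure of (c) or (d) would have to look like and candidly identify the construction of such a witness as ``the main obstacle,'' but no construction is given. As written, the proof of $\neg(e)\Rightarrow\neg(c)$ (or $\neg(e)\Rightarrow\neg(a)$) is a plan, not an argument, so the cycle is not closed.

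For comparison, the paper closes the cycle with a short concrete construction refuting (d): by instability there is a type $p=\tp(b/M)$ over a model with two distinct coheir extensions over $Ma$, witnessed by realizations $b,b'$ of $p$ and a formula $\varphi(x;ma)$ with $\models\varphi(b;ma)\wedge\neg\varphi(b';ma)$; by the independence theorem one may arrange $a\ind_M bb'$. Taking $d$ to be the \emph{unordered pair} $\{b,b'\}$, one has $a\ind_M d$, and $b\in\dcl(dMa)\cap\acl(dM)$ (the formula $\varphi(x;ma)$ picks $b$ out of the pair once $a$ is available) while $b\notin\dcl(Md)$, so the displayed $\dcl$-equality in (d) fails over $M$. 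This is exactly the ``extra bounded (hyper)imaginary acquired after a nonforking extension'' that you gesture at, but realized by a finite imaginary (a two-element set) rather than by an indiscernible configuration built from the order property; the coheir/independence-theorem setup does the work of guaranteeing both the independence $a\ind_M d$ and the symmetry $b\equiv_{Md}b'$ that you correctly identify as the delicate points. You would need to supply something of this kind for your proof to be complete.
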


\begin{proof}
(a)$\Rightarrow$(b) Assume (a). Let $B$-independent $\{a,b\}$
be given where $B=\bdd(B)$.  For any enumeration $\ov{bB}'$ such that
$\ov{bB}'\equiv_{Bb}\ov{bB}$, due to (a) we have
$\ov{bB}'\equiv_{\bdd(Ba)b}\ov{bB}.$
Hence (b) holds.

(b)$\Rightarrow$(c) Let a Lascar strong type $p=\tp(a/B)$ be given where $B$ is boundedly closed.
Given $b$ with $a\ind_B b$, we want to show $q=tp(a/Bb)$ is a Lascar strong type. Namely for $a'\models q$,
we shall see $a\equiv_{\ov{Bb}}a'$. Now  let $\ov{Bb}'$ be an image
of  $\ov{Bb}$ under a $Bb$-automorphism sending $a$ to $a'$. Hence $a\ov{Bb}\equiv a'\ov{Bb}'$.
But due to (b),
$\ov{Bb}\equiv_{a'B}\ov{Bb}'$. Thus $a\ov{Bb}\equiv a'\ov{Bb}$ as desired.

(c)$\Rightarrow$(d) This follows from Fact~\ref{dcl} above.

(d)$\Rightarrow$(e) Suppose $T$ is simple and unstable.  By instability, there is a type $p = \tp(b/M)$ over a model $M$ with two distinct coheir extensions over $Ma$ (for some $a$).  In other words, there are realizations $b$ and $b'$ of $p$ and a formula $\varphi(x; m a)$ over $Ma$ such that $\models \varphi(b;ma) \wedge \neg \varphi(b'; ma)$.  By the independence theorem, we may assume that $a \ind_M b b'$.  Let $d=\{b,b'\}$.  Then $a \ind_M d$ and $b\in \dcl(dMa)\cap \acl(dM)$, but $b \notin \dcl(Md)$, so (d) fails over $M$.

(e)$\Rightarrow$(a) Remark~\ref{st.rel2} says it.


\end{proof}

Our next goal is to describe the connections between $n$-uniqueness, $n$-existence, and $B(n)$ for various values of the dimension $n$.  First, recall the following  fact (Lemma~3.1(2) from \cite{Hr}):

\begin{fact}
\label{n_ex_n_uq}
(Hrushovski) If $T$ is simple and $T$ has $n$-existence and $n$-uniqueness for some $n \geq 2$, then $T$ has $(n+1)$-existence.
\end{fact}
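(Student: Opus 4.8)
The plan is to prove $(n+1)$-existence directly: given a closed independent $(n+1)$-amalgamation problem $\A\colon\Pm(n+1)\to\cC$, I want to produce an independent solution, i.e.\ a boundedly closed set $\A(n+1)$ together with transition maps $\A_{s,n+1}$ commuting with the given ones. The guiding idea is to single out the vertex $0$ and split the boundary into the faces that contain $0$ and the unique top face $\{1,\dots,n\}$ that does not. The faces containing $0$ should be assembled into $\A(n+1)$ using $n$-existence, and then the leftover top face should be glued in coherently using $n$-uniqueness.

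First I would localize at $\{0\}$. By Definition~\ref{localization}, the functor $\A|_{\{0\}}$ has domain $\{s\subseteq\{1,\dots,n\}: s\cup\{0\}\in\Pm(n+1)\}=\Pm(\{1,\dots,n\})$, so after identifying $\{1,\dots,n\}$ with $n$ it is an $n$-amalgamation problem, with $\A|_{\{0\}}(s)=\A(\{0\}\cup s)$. Since $\A$ is closed and independent, so is $\A|_{\{0\}}$ (its base is $\A(\{0\})$, its vertices are the $\A(\{0,i\})$, and closedness and independence are inherited from $\A$ via the usual transitivity of nonforking through bounded closures). Applying $n$-existence to $\A|_{\{0\}}$ yields an independent solution whose top value is $\A|_{\{0\}}(\{1,\dots,n\})=\A(n+1)$, the set I will use, together with elementary embeddings $\beta_s\colon\A(\{0\}\cup s)\to\A(n+1)$ for every $s\subseteq\{1,\dots,n\}$. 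Composing with the original transition maps, set $\gamma_s:=\beta_s\circ\A_{s,\{0\}\cup s}\colon\A(s)\to\A(n+1)$ for each $s\subsetneq\{1,\dots,n\}$; by functoriality these form a compatible family.

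Next I would glue in the top face $\A(\{1,\dots,n\})$. The key observation is that $\A\restriction\Pm(\{1,\dots,n\})$ is itself an $n$-amalgamation problem which now has \emph{two} closed independent solutions: the given one supplied by the top value $\A(\{1,\dots,n\})$ with its inclusion maps, and the one realized inside $\A(n+1)$ whose top is $\overline{\bigcup_{i\in\{1,\dots,n\}}\gamma_{\{i\}}(\A(\{i\}))}$ with the maps $\gamma_s$. By $n$-uniqueness (see Proposition~\ref{n-uq_characterization}, taking the boundary maps to be identities) these two solutions are isomorphic, giving an elementary embedding $\rho\colon\A(\{1,\dots,n\})\to\A(n+1)$ with $\rho\circ\A_{s,\{1,\dots,n\}}=\gamma_s$ for all $s\subsetneq\{1,\dots,n\}$. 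I then define the remaining transition maps by $\A_{\{1,\dots,n\},n+1}:=\rho$ and, for $s$ containing $0$, $\A_{s,n+1}:=\beta_{s\setminus\{0\}}$; for $s\subsetneq\{1,\dots,n\}$ the two candidate definitions agree precisely because $\rho$ commutes with the face maps. A short check of the remaining commutativity relations shows the extended $\A\colon\P(n+1)\to\cC$ is a functor, and a bookkeeping argument using that the solution of $\A|_{\{0\}}$ was independent (so the $\A(\{0,i\})$ are independent over $\A(\{0\})$) together with transitivity of nonforking shows that $\{\A(\{i\}):i\le n\}$ is independent over $\A(\emptyset)$; hence the solution is independent.

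The hard part will be the application of $n$-uniqueness in the gluing step. I must confirm that the configuration realized inside $\A(n+1)$ really is a \emph{closed} and \emph{independent} solution of $\A\restriction\Pm(\{1,\dots,n\})$, so that $n$-uniqueness applies: closedness is arranged by passing to the bounded closure of the union of the realized vertices, and independence should follow from independence of the solution of $\A|_{\{0\}}$, but verifying that the two solutions share exactly the same boundary data (so that the isomorphism furnished by $n$-uniqueness genuinely fixes the lower faces) requires careful tracking of the transition maps. The other routine-but-delicate point is the final independence bookkeeping, namely promoting independence over the base $\A(\{0\})$ to independence over $\A(\emptyset)$ via transitivity and the fact that adjoining the boundedly closed set $\A(\{0\})$ does not create forking.
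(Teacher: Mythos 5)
The paper states this as a Fact and cites Lemma~3.1(2) of \cite{Hr} without reproducing the argument, so there is no in-paper proof to compare against; your proposal is correct and is essentially the standard proof of Hrushovski's lemma. Localizing at the vertex $0$ to get an $n$-problem over the base $\A(\{0\})$, solving it by $n$-existence, and then invoking $n$-uniqueness to match the given face $\A(\{1,\dots,n\})$ with the copy realized inside the solution is exactly the intended route, and the two points you flag as delicate --- verifying that the realized copy is a closed independent solution of the same $n$-problem, and upgrading independence over $\A(\{0\})$ to independence over $\A(\emptyset)$ via transitivity together with the independence of the faces containing $0$ --- are precisely the checks that are needed, and both go through.
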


The $n=3$ case of the next theorem was proved in \cite{Hr} as Lemma~3.2.

\begin{theorem}
\label{B_n_rel_uniqueness}
If $T$ is simple, $n \geq 3$, and $B$ is any set, then $T$ has relative $n$-uniqueness over $B$ if and only if $T$ has $B(n)$ over $B$.
\end{theorem}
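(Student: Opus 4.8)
The plan is to prove both implications through characterization (3) of $B(n)$ furnished by Lemma~\ref{B_n}, which isolates the single-face form of the statement and so does the real work for us. Write $u_i := n \setminus \{i\}$ for the $(n-1)$-element subsets of $n$, identify the boundedly closed sets $a_0,\dots,a_{n-1}$ of Definition~\ref{relative_uniqueness} with the vertices $\ov{a}_{\{i\}}$, and record the elementary intersection fact that $\ov{a}_u \cap \ov{a}_v = \ov{a}_{u\cap v}$ for any $u,v\subseteq n$: since $\ov{a}_u \nonfork_{\ov{a}_{u\cap v}} \ov{a}_v$, any $d\in\ov{a}_u\cap\ov{a}_v$ has $d\nonfork_{\ov{a}_{u\cap v}}\ov{a}_v$ with $d\in\bdd(\ov{a}_v)$, forcing $d\in\ov{a}_{u\cap v}$. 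As a consequence, whenever each $\sigma_u$ fixes every proper subface $\ov{a}_v$ ($v\subsetneq u$) pointwise, the maps $\{\sigma_{u_i}\}$ agree on all pairwise overlaps, so $\bigcup_i \sigma_{u_i}$ is a well-defined partial map.

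For the implication from $B(n)$ to relative $n$-uniqueness, I would start with a system $\{\sigma_{u_i}\}$ as in Definition~\ref{relative_uniqueness}. Each $\sigma_{u_i}$ is an automorphism of the face $\ov{a}_{u_i}$ fixing its maximal proper subfaces pointwise, so by Lemma~\ref{B_n}(3) (after relabelling so that $i$ plays the role of the omitted index) it extends to a global $\bar\sigma_i\in\Aut(\C)$ that fixes $\ov{a}_{u_j}$ pointwise for every $j\neq i$. I then set $\Sigma := \bar\sigma_0\circ\bar\sigma_1\circ\cdots\circ\bar\sigma_{n-1}$ and claim $\Sigma\restriction\ov{a}_{u_i}=\sigma_{u_i}$ for each $i$: reading the composite from the right, every factor $\bar\sigma_j$ with $j\neq i$ fixes $\ov{a}_{u_i}$ pointwise, while the single factor $\bar\sigma_i$ sends $x\in\ov{a}_{u_i}$ to $\sigma_{u_i}(x)$, which lies back in $\ov{a}_{u_i}$ and is therefore left fixed by the remaining factors. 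Hence $\Sigma$ is a global automorphism extending $\bigcup_i\sigma_{u_i}$, which is exactly what relative $n$-uniqueness demands.

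For the converse, I would verify Lemma~\ref{B_n}(3) directly. Given $\sigma\in\Aut(\ov{a}_{u_{n-1}}/\bigcup_k \ov{a}_{u_{n-1}\setminus\{k\}})$ fixing the maximal proper subfaces of $u_{n-1}$, note that $\sigma$ fixes every $\ov{a}_v$ with $v\subsetneq u_{n-1}$ pointwise, since each such $v$ lies in a maximal proper subface. Setting $\sigma_{u_{n-1}}:=\sigma$ and $\sigma_{u_j}:=\id_{\ov{a}_{u_j}}$ for $j\neq n-1$ gives a system satisfying conditions (1) and (2) of Definition~\ref{relative_uniqueness}, with consistency on overlaps guaranteed by the intersection fact. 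Relative $n$-uniqueness then makes $\bigcup_i\sigma_{u_i}$ elementary, and any global automorphism extending it restricts to $\sigma$ on $\ov{a}_{u_{n-1}}$ while fixing each $\ov{a}_{u_j}$ ($j\neq n-1$) pointwise — precisely condition (3) of Lemma~\ref{B_n}. Since $B$ is arbitrary and the indices play symmetric roles, this yields $B(n)$ over $B$.

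The genuinely delicate point is the telescoping computation in the first implication: one must exploit that each $\bar\sigma_i$ was chosen to fix all \emph{other} faces pointwise (not merely setwise), so that composing the $\bar\sigma_j$ does not disturb the action already recorded on $\ov{a}_{u_i}$, and one must use that $\sigma_{u_i}(x)$ stays inside $\ov{a}_{u_i}$ so the later factors leave it fixed. Everything else is bookkeeping, precisely because Lemma~\ref{B_n} has already converted the definable-closure formulation of $B(n)$ into the single-face extension property (3) on which both directions pivot.
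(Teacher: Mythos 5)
Your proof is correct and takes essentially the same route as the paper's: the substantive direction ($B(n)$ implies relative $n$-uniqueness) is exactly the paper's argument --- extend each face automorphism via Lemma~\ref{B_n}(3) to a global map fixing the other $(n-1)$-faces pointwise, compose, and verify the telescoping restriction --- and both directions pivot on Lemma~\ref{B_n} as the key tool. The only cosmetic difference is in the converse: you verify condition (3) of Lemma~\ref{B_n} directly by padding $\sigma$ with identity maps, whereas the paper argues by contraposition, extracting an explicit element $e$ that witnesses non-elementarity of the union; the two are interchangeable.
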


\begin{proof}
As usual, we may assume that $B = \emptyset$.

$\Rightarrow$: Suppose that $B(n)$ fails, and the failure is witnessed by the independent set $\{a_0, \ldots, a_{n-1}\}$.  So there is an element $e \in \acl(a_0, \ldots, a_{n-2})$ such that $$e \in \dcl(\overline{\widehat{a_0} \ldots a_{n-1}}, \ldots, \overline{a_0 \ldots \widehat{a_{n-2}} a_{n-1}}) \setminus \dcl(\overline{\widehat{a_0} \ldots a_{n-2}}, \ldots, \overline{a_0 \ldots \widehat{a_{n-2}}}).$$  So there is a map $\sigma_{n-1} \in \Aut(\overline{a_0 \ldots a_{n-2}})$ such that $\sigma_{n-1}$ fixes each of the sets $\overline{a_0 \ldots \widehat{a_i} \ldots a_{n-2}}$ pointwise but $\sigma_{n-1}(e) \neq e$.  For each $i$ between $0$ and $n-2$, let $\sigma_i$ be the identity map on $\acl(a_0 \ldots \widehat{a_i} \ldots a_{n-1})$.  Then $\sigma_0 \cup \ldots \cup \sigma_{n-1}$ is not elementary.

$\Leftarrow$: First we fix some notation.  For $i$ between $0$ and $n-1$, let $$A_i = \ov{a_0 \ldots \widehat{a_i} \ldots a_{n-1}},$$ $$B_i = \bigcup_{j \neq i} \ov{a_0 \ldots \widehat{a_i} \ldots \widehat{a_j} \ldots a_{n-1}},$$ $$C_i = \bigcup_{j \neq i} \ov{a_0 \ldots \widehat{a_j} \ldots a_{n-1}}.$$  Suppose that for each $i$, $\sigma_i \in \Aut(A_i / B_i)$.  Then by $B(n)$ and Lemma~\ref{B_n}, there are maps $\ov{\sigma_i} \in \Aut(\C / C_i)$ extending $\sigma_i$.  Let $\sigma := \ov{\sigma_{n-1}} \circ \ldots \circ \ov{\sigma_0}$.  Then if $j \neq i$, $\ov{\sigma_j}$ fixes $A_i$ pointwise, so $\sigma \upharpoonright A_i = \sigma_i$.  Therefore $\sigma_0 \cup \ldots \cup \sigma_{n-1}$ is elementary, proving relative $n$-uniqueness.

\end{proof}

Next, we give a category-theoretic reformulation of the property $B(n)$.

\begin{definition}
Let $n\ge 2$ and let $\A:\P(n)\to \cC$ be a functor which is closed, independent, and untwisted.  A \emph{one-side twisting of $\A$} is a natural isomorphism $\eta: \A^- \to \A^-$ such that all the components of $\eta$, except possibly one, are identity maps.

\end{definition}

\begin{remark}
By Lemma~\ref{B_n} above, the property $B(n)$ for  a simple theory is equivalent to the condition that every one-side twisting of an untwisted closed independent functor $\A: \P(n) \to \cC$ can be extended to a natural isomorphism from $\A$ to itself.  In proofs below, we will use this as the definition of $B(n)$.
\end{remark}

The following claim describes what a one-side twisting looks like.

\begin{claim}
Let $\A:\P(n)\to \cC$ be a closed untwisted independent functor and let $\eta$ be a one-side twisting of $\A$.  Then
\begin{enumerate}
\item
for any $s\in \Pm(n)$, if $\eta_s=\id(\A(s))$, then for all $t\subset s$ we have $\eta_t=\id(\A(t))$;
\item
if $\eta_s\ne \id(\A(s))$, then $s$ is an $(n-1)$-element subset of $n$.

\end{enumerate}
\end{claim}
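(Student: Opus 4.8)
The plan is to extract from the hypotheses a single structural fact about $\eta$ and then read off both items from it. The key observation is that, because $\A$ is untwisted, every transition map $\A_{t,s}$ (for $t\subset s$) is literally the inclusion $\A(t)\hookrightarrow\A(s)$. Feeding this into the naturality square that defines the natural isomorphism $\eta\colon\A^-\to\A^-$, namely $\eta_s\circ\A_{t,s}=\A_{t,s}\circ\eta_t$, collapses it to the restriction identity $\eta_s\restriction\A(t)=\eta_t$ for all $t\subset s$ in $\Pm(n)$. I would establish this first, since it is the engine for both parts.

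For part (1), suppose $\eta_s=\id(\A(s))$ and let $t\subset s$; since $s\subsetneq n$ we have $t\in\Pm(n)$, so $\eta_t$ is a genuine component. By the restriction identity, $\eta_t=\eta_s\restriction\A(t)=\id(\A(s))\restriction\A(t)=\id(\A(t))$, which is exactly what is claimed. This step is immediate once the restriction identity is in hand.

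For part (2), I would argue by contraposition together with the defining feature of a one-side twisting, that at most one component of $\eta$ is non-identity. The contrapositive of the local form of part (1) reads: if $\eta_s\ne\id(\A(s))$ and $s\subseteq t\in\Pm(n)$, then $\eta_t\ne\id(\A(t))$. Now assume $\eta_s\ne\id(\A(s))$ but, for contradiction, that $|s|\le n-2$. Choosing any $k\in n\setminus s$ and setting $t=s\cup\{k\}$ gives $s\subsetneq t$ with $|t|\le n-1$, so $t\in\Pm(n)$; by the contrapositive, $\eta_t\ne\id(\A(t))$ as well. But then $\eta_s$ and $\eta_t$ are two distinct non-identity components, contradicting that $\eta$ is one-side. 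Hence $|s|\ge n-1$, and since $s\subsetneq n$ forces $|s|\le n-1$, we conclude $|s|=n-1$.

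There is no substantive obstacle here; the proof is a direct unwinding of the definitions. The only point requiring care is the index bookkeeping in part (2): one must verify that the auxiliary set $t$ genuinely lies in $\Pm(n)$, equivalently that $|t|\le n-1$, so that $\eta_t$ is an actual component of the twisting, which is precisely where the assumption $|s|\le n-2$ is used. Keeping this off-by-one check honest is the whole game.
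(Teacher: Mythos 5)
Your proof is correct. The paper states this claim without proof (it is treated as an immediate consequence of the definitions), and your argument --- reducing naturality over an untwisted functor to the restriction identity $\eta_s\restriction\A(t)=\eta_t$ and then using the ``at most one non-identity component'' clause to rule out $|s|\le n-2$ --- is precisely the routine verification the authors intended to leave to the reader.
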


\begin{proposition}\label{weak uq -> uq}
Let $n\ge 3$ and let $T$ be a simple theory that has $(n-1)$-uniqueness. Then $n$-uniqueness holds if and only if $B(n)$ holds.
\end{proposition}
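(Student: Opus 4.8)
The plan is to prove both implications, invoking $(n-1)$-uniqueness only for the nontrivial direction, and to use throughout the reformulations recorded in the remarks above: that $n$-uniqueness says every natural isomorphism $\eta\colon \A^-\to\B^-$ between the restrictions of closed independent functors $\A,\B\colon\P(n)\to\cC$ extends to a natural isomorphism $\A\to\B$, and that $B(n)$ is equivalent to relative $n$-uniqueness (Theorem~\ref{B_n_rel_uniqueness}). The easy direction is $n$-uniqueness $\Rightarrow B(n)$: a one-side twisting of an untwisted closed independent $\A$ is in particular a natural isomorphism $\A^-\to\A^-$, so $n$-uniqueness extends it to a natural isomorphism $\A\to\A$, which is exactly what $B(n)$ requires. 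This direction does not use $(n-1)$-uniqueness.

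For the converse, assume $(n-1)$-uniqueness and $B(n)$ and let $\eta\colon\A^-\to\B^-$ be a natural isomorphism of closed independent functors. Using the Claim that every functor is naturally isomorphic to an untwisted one, I first reduce to the case where $\A$ and $\B$ are untwisted: transport $\eta$ across those natural isomorphisms, and note that an extension of the transported map pulls back to an extension of $\eta$. So I may assume all transition maps of $\A$ and $\B$ are inclusions, and I must show that $\bigcup_{u\subset_{n-1}n}\eta_u$ (suitably read into $\A(n)\to\B(n)$) is elementary. The key device is localization. For each $i\in n$, the localization $\A|_{\{i\}}$ is a closed independent functor on $\P(n\setminus\{i\})$ — an $(n-1)$-dimensional problem whose top object is $\A|_{\{i\}}(n\setminus\{i\})=\A(n)$. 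Restricting $\eta$ to the faces containing $i$ yields a natural isomorphism $\A|_{\{i\}}^-\to\B|_{\{i\}}^-$, and by $(n-1)$-uniqueness this extends to a natural isomorphism $\A|_{\{i\}}\to\B|_{\{i\}}$ whose top component is an elementary bijection $\tau_i\colon\A(n)\to\B(n)$. By naturality, $\tau_i$ agrees with $\eta_s$ on $\A(s)$ for every proper $s\ni i$; in particular it already realizes the desired gluing on every top face $s_j:=n\setminus\{j\}$ with $j\ne i$, and can fail only on the single face $s_i$.

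Now fix $i=0$ and set $\rho:=\eta_{s_0}\circ(\tau_0\restriction\A(s_0))^{-1}\in\Aut(\B(s_0))$, the discrepancy on the remaining face. Combining the commuting squares of $\eta$ with those of the extended localization isomorphism, one checks that $\tau_0\restriction\A(v)=\eta_v=\eta_{s_0}\restriction\A(v)$ for every $v\subsetneq s_0$, whence $\rho$ fixes $\B(v)$ pointwise for all $v\subsetneq s_0$. Therefore the system $\{\mu_u\mid u\subset_{n-1}n\}$ with $\mu_{s_0}=\rho$ and $\mu_{s_j}=\id_{\B(s_j)}$ for $j\ne 0$ satisfies the hypotheses of relative $n$-uniqueness (Definition~\ref{relative_uniqueness}). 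Since $B(n)$ holds, Theorem~\ref{B_n_rel_uniqueness} yields $\hat\rho\in\Aut(\C)$ fixing each $\B(s_j)$, $j\ne 0$, pointwise and acting as $\rho$ on $\B(s_0)$; since $\hat\rho$ preserves the faces, its restriction to the closed set $\B(n)$ lies in $\Aut(\B(n))$. Then $\hat\rho\circ\tau_0\colon\A(n)\to\B(n)$ restricts to $\eta_{s_j}$ on every top face: on $s_j$ with $j\ne 0$ because $\hat\rho$ fixes $\B(s_j)$ and $\tau_0\restriction\A(s_j)=\eta_{s_j}$, and on $s_0$ because $\hat\rho\circ\tau_0\restriction\A(s_0)=\rho\circ(\rho^{-1}\circ\eta_{s_0})=\eta_{s_0}$. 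By Proposition~\ref{n-uq_characterization} this map witnesses the required extension, establishing $n$-uniqueness.

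The conceptual heart — and the main obstacle — is the localization step: recognizing that $(n-1)$-uniqueness, applied through each $\A|_{\{i\}}$, manufactures candidate top maps $\tau_i$ that already solve all but one face, and that the lone leftover discrepancy is precisely a relative $n$-uniqueness (one-side twisting) datum that $B(n)$ can absorb. The principal technical verification is that $\rho$ fixes every proper subface $\B(v)$, $v\subsetneq s_0$, which forces one to track carefully the interaction between the naturality of $\eta$ and the commuting squares of the extended localization isomorphism; a secondary routine check is that each $\A|_{\{i\}}$ is again closed and independent, so that $(n-1)$-uniqueness genuinely applies.
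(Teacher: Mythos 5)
Your proof is correct and follows essentially the same route as the paper's: localize at a single vertex, apply $(n-1)$-uniqueness to the localization to get a candidate top map that is correct on every $(n-1)$-face through that vertex, observe that the residual discrepancy on the one remaining face is an automorphism fixing all of its proper subfaces, and absorb it using $B(n)$. The only differences are cosmetic — you localize at $0$ rather than $n-1$, invoke $B(n)$ through the relative $n$-uniqueness formulation (Theorem~\ref{B_n_rel_uniqueness}) rather than through the one-side-twisting reformulation, and correct the candidate map on the $\B$-side rather than on the $\A$-side.
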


\begin{proof}
By Lemma~\ref{B_n}, it is clear that $n$-uniqueness implies $B(n)$ for any theory without the assumption of $(n-1)$-uniqueness.

We prove the converse. Let $\A:\P(n)\to \cC$ be a closed independent functor. Without loss of generality, we may assume that $\A$ is untwisted. Let $\A'$ be an arbitrary functor that extends $\A^-$. We need to construct an elementary bijection $\sigma:\A'(n)\to \A(n)$ such that $\sigma \circ \A'_{s, n}=\A_{s,n}$ for all $s\in \Pm(n)$.

By $(n-1)$-uniqueness, the natural isomorphism $\nu:(\A'|_{\{n-1\}})^-\to (\A|_{\{n-1\}})^-$ given by the identity components can be extended to a natural isomorphism $\bar \nu:\A'|_{\{n-1\}}\to \A|_{\{n-1\}}$.

\begin{claim}
For $s\in \Pm(n)|_{\{n-1\}}$, let $\eta_s:=\id(\A(s))$, and let $\eta_{n-1}:=\bar \nu_{n-1}\circ \A'_{n-1,n}$. Then the  collection $\eta=\{\eta_s\mid s\in \Pm(n)\}$ is a one side twisting of $\A$.
\end{claim}

\begin{proof}
We only need to check that $\eta$ is a natural isomorphism from $\A$ to itself, i.e., that the component $\eta_{n-1}$ commutes with the identity components of $\eta$. Let $s$ be a proper subset of $n-1$. Then
$\A_{s,n-1}\circ \eta_s=\id(\A(s))$. On the other hand,
\begin{multline*}
\eta_{n-1}\circ \A_{s,n-1} = \eta_{n-1}\circ \id(\A(s)) =
\bar \nu_{n-1}\circ \A'_{n-1,n}\circ \id(\A(s))
\\
=\bar\nu_{n-1}\circ \A'_{s,n}=\A_{s,n}\circ \nu_s=\id(\A(s)).
\end{multline*}
Thus, $\eta_{n-1}\circ \A_{s,n-1} = \A_{s,n-1}\circ \eta_s$.
\end{proof}

By $B(n)$, there is an elementary map $\bar \eta$ that extends the natural isomorphism $\eta$. It is now easy to check that the map $\sigma:=\left(\bar\eta_n\right)^{-1} \circ \bar\nu_{n-1}$ is as needed.
\end{proof}

That $T$ has $(n-1)$-uniqueness is essential in Proposition~\ref{weak uq -> uq}, as for example the random ternary graph has $B(3)$, but it has neither 3-uniqueness nor 2-uniqueness.
Note that 2-uniqueness is equivalent to stability.

\begin{theorem}\label{implies B(n)}
Suppose that $T$ is a simple theory  and $T$ has the $(n-1)$-uniqueness property and the $(n+1)$-existence property for some $n\ge 3$. Then $T$ has $B(n)$.
\end{theorem}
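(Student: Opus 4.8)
The plan is to use the category-theoretic reformulation of $B(n)$ from the remark following Theorem~\ref{B_n_rel_uniqueness}: it suffices to show that every one-side twisting $\eta$ of an untwisted, closed, independent functor $\A:\P(n)\to\cC$ extends to a natural isomorphism $\A\to\A$. By the preceding claim, the unique possibly-nontrivial component $\eta_u$ sits on an $(n-1)$-element set, and after relabeling I may assume $u=\{0,\dots,n-2\}=n\setminus\{n-1\}$; thus $\eta_u\in\Aut(\A(u))$ fixes $\A(v)$ pointwise for every $v\subsetneq u$, while $\eta_s=\id$ for all other $s\in\Pm(n)$. Writing the transition maps as inclusions and setting $a_i:=\A_n(\{i\})$, the goal becomes: extend $\eta_u$ to $\eta_n\in\Aut(\ov{a_0\cdots a_{n-1}})$ fixing each face $\ov{a_0\cdots\widehat{a_i}\cdots a_{n-1}}$ ($i\le n-2$) pointwise.

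First I would build an $(n+1)$-amalgamation problem $\B:\Pm(n+1)\to\cC$ on the index set $n+1=\{0,\dots,n\}$ that ``doubles'' the apex $n-1$ by a fresh independent copy indexed $n$. Let $\pi$ relabel $n\mapsto n-1$ and fix $0,\dots,n-2$. On subsets omitting $n$ put $\B(s):=\A(s)$, and on subsets omitting $n-1$ put $\B(s):=\A(\pi(s))$; these agree on subsets of $u$. The twisting is injected on the shared top face: the inclusion of $\ov{a}_u$ into the $(n-1)$-side and its inclusion into the $n$-side are required to differ by $\eta_u$. For subsets $s$ containing both $n-1$ and $n$ (necessarily omitting some $i\le n-2$, since $s\ne n+1$) one must amalgamate the two copies over their common part; here is where $(n-1)$-uniqueness enters, exactly as in Proposition~\ref{weak uq -> uq}. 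Localizing at $\{n-1,n\}$ turns the gluing data into a natural isomorphism between $(n-1)$-dimensional functors whose only nontrivial part is $\eta_u$, and $(n-1)$-uniqueness guarantees it extends coherently across all these faces. This makes $\B$ a well-defined closed independent functor on $\Pm(n+1)$.

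Next I would apply $(n+1)$-existence to obtain a closed independent solution $\B':\P(n+1)\to\cC$. Inside the amalgam $\B'(n+1)$ lie two copies of $\A(n)$: the face $F:=\B'_{n+1}(\{0,\dots,n-1\})$ from the $\A$-side and the face $F':=\B'_{n+1}(\{0,\dots,n-2,n\})$ from the $\pi$-side, each carrying a canonical identification with $\A(n)$. Composing these identifications with $\pi$ produces an elementary map whose restriction to $\ov{a}_u$ is precisely $\eta_u$ (the discrepancy we inserted) and which fixes each face $\ov{a_0\cdots\widehat{a_i}\cdots a_{n-1}}$, $i\le n-2$, since on index sets omitting such $i$ the two sides were glued by the identity. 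Restricting to $\ov{a_0\cdots a_{n-1}}$ and extending by saturation of $\C$ yields the desired $\eta_n$, completing the extension of the one-side twisting and hence establishing $B(n)$.

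The main obstacle is the coherence of $\B$: one must verify that inserting $\eta_u$ on the shared top face is consistent with every transition map of the doubled system, in particular across each face containing both apexes $n-1$ and $n$. This is exactly the point that forces the hypothesis of $(n-1)$-uniqueness; without it the twist $\eta_u$ cannot in general be propagated consistently through the $(n-1)$-dimensional gluing, and $\B$ fails to be well-defined. The remaining verifications (independence and closedness of $\B$, and that the extracted map is elementary with the stated restrictions) are routine diagram chases using functoriality and the definition of a one-side twisting.
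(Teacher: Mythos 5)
Your overall architecture -- double the apex vertex $n-1$ by a fresh independent copy $n$, insert the one-side twisting $\eta_u$ on the shared face, and apply $(n+1)$-existence to the resulting problem -- is the same as the paper's (which runs the argument contrapositively). But the final extraction step has a genuine gap. The two ``copies of $\A(n)$'' inside the amalgam are the faces $F=\B'_{n+1}(\{0,\dots,n-1\})$ and $F'=\B'_{n+1}(\{0,\dots,n-2,n\})$, which are \emph{distinct} subsets of $\B'(n+1)$ (one contains the vertex $a_{n-1}$, the other the fresh vertex $a_n$; their intersection is just $\ov{a}_u$). Comparing the two canonical embeddings of $\A(n)$ therefore yields an elementary bijection from $F$ onto $F'$, not an automorphism of $F$, and it does not fix the faces $\ov{a_0\cdots\widehat{a_i}\cdots a_{n-1}}$ -- it carries each of them to the corresponding face through $a_n$. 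To produce the automorphism that $B(n)$ demands you must further identify $F'$ with $F$ compatibly with all the shared lower-dimensional data, and that identification is exactly where the content of the theorem lies: inside the twisted amalgam, the type of the opposite face $\vec g^{**}$ over $\ov{a}_u$ is itself shifted by $\eta_u$ (because that is how you attached it), so the discrepancy you inserted cancels and you recover no information about whether $f\equiv_{\vec g}\eta_u(f)$.

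The paper resolves this by a comparison that your proposal omits, and it spends the hypothesis of $(n-1)$-uniqueness there rather than where you spend it. Building the twisted $(n+1)$-problem requires no uniqueness at all: one first builds an untwisted closed independent functor $\widehat{\A}$ on all of $\P(n+1)$ (just realize a nonforking copy of $a_{n-1}$ and take bounded closures with inclusions), and then alters the single transition map $\widehat{\A'}_{n-1,n}:=\eta_{n-1}$; functoriality is automatic because a one-side twisting is the identity on every proper face, so your worry about propagating the twist through the faces containing both apexes does not arise. The real use of $(n-1)$-uniqueness comes at the end: localizing the untwisted and twisted amalgams at the pair $\{n-1,n\}$ produces two closed independent solutions $\B$, $\B'$ of the \emph{same} $(n-1)$-amalgamation problem, and $(n-1)$-uniqueness would force an isomorphism $\sigma$ between them; chasing the element $f\in\dcl(g_0\cdots g_{n-2})$ through $\sigma$ along the two routes (via the face $n$ and via the face $(n+1)\setminus\{n-1\}$) forces $\widehat{\A'}_{n-1,n+1}(f)=\widehat{\A'}_{n-1,n+1}(\eta_{n-1}^{-1}(f))$, contradicting injectivity. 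As written, your proof uses $(n-1)$-uniqueness only to set up $\B$ and never performs this comparison, so the step ``the composite map restricts to $\eta_u$ and fixes the other faces'' is unjustified and, I believe, false.
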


\begin{proof}
The plan is to assume that $B(n)$ fails but $(n+1)$-existence holds, and show that $(n-1)$-uniqueness must fail. So let $\A:\P(n)\to\cC$ be a closed untwisted independent functor and let $\eta$ be a one side twisting of $\A$ that cannot be extended to a natural isomorphism from $\A$ to itself. Without loss of generality, we may assume that the component $\eta_{n-1}$ that maps $\A(n-1)$ to itself is not an identity map.
This means that we can pick an element $f \in \A(n-1)$ such that $f\in\dcl(g_0...g_{n-2})$ where $g_i =\A(n \setminus \{i\})$,  and $f'_0\not\equiv_{\overline{g}}f$ where $f'_0=\eta^{-1}_{n-1} (f)\ne f$.

Define the independent functor $\A':\P(n)\to \cC$ by letting $\A'(s):=\A(s)$ for all $s\in \P(n)$ and letting all the transition maps be identity maps, except for one: we let $\A'_{n-1,n}:=\eta_{n-1}$.

Next we pick a closed untwisted independent functor $\widehat{\A} : \P(n+1) \rightarrow \cC$ which extends $\A$ and such that $\widehat{\A} \upharpoonright \P( (n+1) \setminus \{n-1\})$ is isomorphic to $\A$ (via a natural isomorphism that associates $s \cup \{n\}$ to $s \cup \{n-1\}$ for any $s \subseteq (n-1)$).  So, if we write
$g_i^* = \widehat{\A}((n+1) \setminus \{i,n-1\})$, then
\begin{equation}
\label{star}\vec g := g_0...g_{n-2}\equiv_{\widehat{\A}(n-1)}\vec g^*=g^*_0...g^*_{n-2}.
\end{equation}
Since $\widehat{\A}(n-1)={\A}(n-1)$, in particular $\vec g\equiv_{f}\vec g^*$.

By $(n+1)$-existence, we can construct another closed independent functor $\widehat{\A'} : \P(n+1) \rightarrow \cC$ such that

\begin{enumerate}
\item If $s \subsetneq n+1$, then $\widehat{\A'}(s) = \widehat{\A}(s)$;
\item If $s \subseteq t \subsetneq n+1$ and $(s,t) \neq (n-1, n)$, then $\widehat{\A'}_{s,t} = \id_s$; and
\item $\widehat{\A'}_{n-1, n} = \eta$.
\end{enumerate}

Note that this means that the ``face'' $\widehat{\A'} \upharpoonright \P(n)$ is equal to $\A'$.

Finally, let $\B = \widehat{\A}|_{\{n-1,n\}}$ and $\B' = \widehat{\A'}|_{\{n-1,n\}}$, and we claim that these witness the failure of $(n-1)$-uniqueness.  By definition, $\B^{-} = (\B')^-$, so we will assume towards a contradiction that there is an elementary map $\sigma: \widehat{\A}(n+1) \rightarrow \widehat{\A'}(n+1)$ such that for any $s \subsetneq n - 1$, $$\sigma \circ \widehat{\A}_{s \cup \{n-1, n\}, n+1} = \widehat{\A'}_{s \cup \{n-1, n\}, n+1}.$$  Since $\widehat{\A}$ is untwisted, this last equation can be rewritten as
\begin{equation}
\label{sigma}\sigma \upharpoonright \widehat{\A}(s \cup \{n-1, n\}) = \widehat{\A'}_{s \cup \{n-1, n\}, n+1}. \end{equation}


Now, for $i$ between $0$ and $n-2$, write
 $$g'_i := \widehat{\A'}_{n \setminus \{i\}, n+1} (g_i)=\widehat{\A'}_{ n+1}(n \setminus \{i\}),$$ and
 similarly $$g''_i := \widehat{\A'}_{(n+1) \setminus \{i,n-1\}, n+1} (g_i^*)
 =\widehat{\A'}_{ n+1}((n+1) \setminus \{i,n-1\}).$$
 By equation~\ref{sigma} above plus the fact that $\widehat{\A'}_{n \setminus \{i\}, (n+1) \setminus \{i\}}$ is an identity map, it follows that $g'_i = \sigma(g_i)$, and similarly $g''_i = \sigma(g^*_i)$. Thus $\sigma$ witnesses
  $\vec g\vec g^*\equiv \vec g'\vec g''$. Note also by equation~\ref{star} above, $f\vec g\equiv f\vec g^*$. Hence there must exist the unique element $f'=\sigma(f)$  such that
  $$f\vec g\equiv f'\vec g'\equiv f'\vec g''.$$

On the other hand, due to   compatibility,
$$\widehat{\A'}_{n, n+1}\restriction \vec g =\sigma \restriction \vec g.$$
Then since $f\in\dcl(\vec g)$, we have $\widehat{\A'}_{n, n+1}(f\vec g)=\sigma(f\vec g)=f'\vec g'$,
and so $f'=\widehat{\A'}_{n, n+1}(f)=(\widehat{\A'}_{n, n+1} \circ \widehat{\A'}_{n-1,n})(f'_0)
=\widehat{\A'}_{n-1, n+1} (f'_0).$ Also we have
$$\widehat{\A'}_{(n+1) \setminus \{n-1\}, n+1}\restriction \vec g^* =\sigma \restriction \vec g^*.$$ Thus similarly $f'$ must be equal to $\widehat{\A'}_{(n+1) \setminus \{n-1\}, n+1}(f)$.
But since this time $ \widehat{\A'}_{n-1,(n+1) \setminus \{n-1\}}$
is an identity map, we have
$$\widehat{\A'}_{(n+1) \setminus \{n-1\}, n+1}(f)=(\widehat{\A'}_{(n+1)\setminus \{n-1\}, n+1} \circ \widehat{\A'}_{n-1,(n+1) \setminus \{n-1\}})(f),$$
and so
$f'=\widehat{\A'}_{n-1, n+1} (f)$. This leads a contradiction as $f\ne f'_0$ and
$\widehat{\A'}_{n-1,n+1}$ is injective.

\end{proof}

\begin{corollary}
\label{n-uniqueness}
Suppose $T$ is simple, and has $(n-1)$-uniqueness and $(n+1)$-existence properties for some $n\ge 3$. Then $T$ has the $n$-uniqueness property.
\end{corollary}

\begin{proof}

By the previous theorem and Proposition~\ref{weak uq -> uq}.

\end{proof}





\begin{corollary}
\label{uq<->ex}
$T$ simple. Assume $T$ has $k$-uniqueness for all
$3\leq k<n$  ($4\leq n$). Then the following are equivalent.
\begin{enumerate}
\item
$T$ has
$n$-uniqueness.
\item
$T$ has  $(n+1)$-amalgamation.
\item
 $B(n)$ holds in $T$.
 \end{enumerate}
 If $T$ is stable (so 2-uniqueness holds), then above equivalence  holds for $n=3$
 too.
\end{corollary}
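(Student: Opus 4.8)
The first observation is that $n$-existence, although not listed among the hypotheses, is forced by them. Since $T$ is simple it has $3$-existence automatically. I would then show by induction that $k$-existence holds for every $k$ with $3\le k\le n$: if $m$-existence holds for some $m$ with $3\le m\le n-1$, then $m<n$ gives $m$-uniqueness by hypothesis, so Fact~\ref{n_ex_n_uq} yields $(m+1)$-existence; iterating from $m=3$ up to $m=n-1$ delivers $n$-existence. The point is that the range $3\le k<n$ of the uniqueness hypothesis supplies precisely the $m$-uniqueness needed at each stage of this bootstrap.

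With $n$-existence now in hand, the equivalence is a matter of routing through the results already proved in this section. For (1)$\Leftrightarrow$(3) I would apply Proposition~\ref{weak uq -> uq}, whose hypotheses $n\ge 3$ and $(n-1)$-uniqueness both hold (the latter since $n-1<n$), and whose conclusion is exactly that $n$-uniqueness is equivalent to $B(n)$. For (1)$\Rightarrow$(2), assuming $n$-uniqueness and combining it with the $n$-existence derived above, Fact~\ref{n_ex_n_uq} produces $(n+1)$-existence, which is $(n+1)$-amalgamation in the terminology of the definition. For (2)$\Rightarrow$(1), I would invoke Corollary~\ref{n-uniqueness}, whose hypotheses are $(n-1)$-uniqueness (which holds) and $(n+1)$-existence (which is (2)), and whose conclusion is $n$-uniqueness. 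These three implications close the cycle among (1), (2), and (3).

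For the stable case with $n=3$ the uniqueness range $3\le k<3$ is empty, so there is no induction to run, but none is needed because $3$-existence is automatic for simple theories. The only adjustment is that the role played by $(n-1)$-uniqueness in the three implications above is now filled by $2$-uniqueness, which holds for stable $T$ by stationarity; with this in hand, Proposition~\ref{weak uq -> uq} (for $n=3$) gives (1)$\Leftrightarrow$(3), Fact~\ref{n_ex_n_uq} applied to $3$-existence together with $3$-uniqueness gives (1)$\Rightarrow$(2), and Corollary~\ref{n-uniqueness} (for $n=3$, using $2$-uniqueness and $4$-existence) gives (2)$\Rightarrow$(1). The main obstacle, such as it is, lies only in recognizing that $n$-existence must be manufactured from the uniqueness assumptions rather than taken for granted; once that bootstrapping step is carried out, each implication is a direct citation of an earlier result, and the remaining work reduces to verifying that the uniqueness index demanded by each cited result falls within the hypothesized range.
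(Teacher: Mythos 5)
Your proposal is correct and follows essentially the same route as the paper: (1)$\Leftrightarrow$(3) by Proposition~\ref{weak uq -> uq}, (2)$\Rightarrow$(1) by Corollary~\ref{n-uniqueness}, and (1)$\Rightarrow$(2) by Fact~\ref{n_ex_n_uq} together with the inductive bootstrap of $k$-existence from the assumed $k$-uniqueness for $3\le k<n$. The paper compresses that bootstrap into the phrase ``use Fact~\ref{n_ex_n_uq} and induction,'' which is exactly the step you spelled out.
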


\begin{proof}

Proposition~\ref{weak uq -> uq} says (1)$\Leftrightarrow$(3), and Corollary~\ref{n-uniqueness}  says
 (2)$\Rightarrow$(1).
For  (1)$\Rightarrow$(2), use Fact~\ref{n_ex_n_uq} and induction.

\end{proof}

Although 2-uniqueness is equivalent to stability, some unstable theory can have $k$-uniqueness for
$k\geq 3$ (e.g. the random graph). Thus Corollary~\ref{uq<->ex} properly covers the unstable case.

\section{Amalgamation of non-closed functors}

In this subsection, we discuss properties involving the amalgamation of functors which are not closed (i.e. $\a(s)$ could be a proper subset of $\acl(\a_s(\{i\}) : i \in s)$).  This yields both new uniqueness and new existence properties, which we will show are related to the old amalgamation properties.
In below we take $\CC_A$ for  the target category, which  is  essentially not at all different from
taking $\CC$ for that.

\begin{definition} Fix a boundedly  closed set
$A$ and let
$\CC_A=\CC_A(T)$ be the category of sets containing $A$
 with $A$-elementary embeddings.  Suppose that $S \subseteq \CP(n)$ is closed under subsets and $\A : S \rightarrow \CC_A$ is a functor.
\be
\item
For $1 \leq k < n$, a functor $\A$ is {\em $k$-skeletal (over $A$)} if

\be
\item
 $\A$ is independent; and
\item
 For every $u \in S$, $\A(u)=
\bigcup \{\bdd(\A_u(v))\mid v\subseteq u, \ |v|\leq k\}.$
\ee

\item
We say $T$ has $(n,k)$-{\em amalgamation} over $A$
if any $k$-skeletal functor $\A:\CP^-(n)\to \CC_A$  can be
extended to a $k$-skeletal functor $\hat \A:\CP(n)\to\CC_A$. Recall that
$T$ has $n$-{\em amalgamation} (over $A$) if it has
$(n,n-1)$-{\em amalgamation} (over $A$.)

\ee
\end{definition}

\begin{remark}
\label{rkonnap}
\be
\item
The notion of $(n,k)$-amalgamation $(n>k)$ above is different from that of $(n,k')$-amalgamation
$(k'>n+1)$ in \cite{Hr}.
\item
Note that for $A$-independent $\{a_0,\dots ,a_{n-1}\}$,
$$\bigcup_{\substack{v\subseteq n \\ |v|=n-1}} \bdd(a_i \mid i\in v)
\setminus
\bigcup_{\substack{w\subseteq n \\ |w|=n-2}} \bdd(a_i \mid i\in w)
$$
is the union of disjoint sets
$$
\bdd(a_i\mid i\in v)\setminus \bigcup_{\substack{w\subseteq v \\ |w|=n-2}} \bdd(a_i\mid i\in w)
$$
for $v\subseteq_{n-1} n$.
Hence it easily follows that $T$ has $n$-amalgamation
over $A$ if and only if it has $(n,n-2)$-amalgamation over $A$ (see the proof of
Theorem~\ref{relonnap} below.)

\item
Due to stationarity, any stable theory has  $(n,1)$-amalgamation.

\ee
\end{remark}

\begin{definition}
\label{relativenkuniqueness}
Let $A$ be a set.  We say that \emph{relative $(k,n)$-uniqueness holds over $A$}  ($k\leq n$) if for every $A$-independent collection of boundedly closed sets $a_0,\dots,a_{n-1}$, each containing $A$, and for any set $\{\sigma_u \mid u \subset_{k-1} n\}$ such that:
\begin{enumerate}
\item
$\sigma_u$ is an automorphism of $\overline{a}_u$, and
\item
For all $v\subsetneq u$, $\sigma_u$ is an identity on $\overline{a}_v$;
\end{enumerate}
we have that $\bigcup _{u} \sigma_{u}$ is an elementary map.

A theory $T$ has the \emph{relative $(k,n)$-uniqueness property} if it has the relative $n$-uniqueness property over any set $A$.
\end{definition}

Note that relative $(k,k)$-uniqueness is just relative $k$-uniqueness.

\begin{lemma}
\label{relknuni}
If $T$ has $B(k)$ over $A$ ($k\geq 3$), then relative $(k,n)$-uniqueness over $A$ holds  for any $n\geq k$.
\end{lemma}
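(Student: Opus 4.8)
The plan is to argue by induction on $n\ge k$, the base case $n=k$ being immediate: relative $(k,k)$-uniqueness is just relative $k$-uniqueness, which (as $k\ge 3$) is equivalent to $B(k)$ by Theorem~\ref{B_n_rel_uniqueness}. For the inductive step I would first reduce the full gluing statement to a single-face statement by exactly the composition device used in the $\Leftarrow$ direction of Theorem~\ref{B_n_rel_uniqueness}. Namely, it suffices to prove the following \emph{Key Extension} property: for every $A$-independent $a_0,\dots,a_{n-1}$, every $u\subseteq_{k-1} n$, and every $\sigma\in\Aut(\ov{a}_u)$ that fixes $\ov{a}_v$ pointwise for all $v\subsetneq u$, the map $\sigma$ extends to some $\hat\sigma\in\Aut(\C)$ fixing $\ov{a}_w$ pointwise for \emph{every} $w\subseteq_{k-1} n$ with $w\ne u$. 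Indeed, given the data $\{\sigma_u\}$ of relative $(k,n)$-uniqueness, choosing such an $\hat\sigma_u$ for each $u$ and composing them in any order yields a map whose restriction to each $\ov{a}_u$ equals $\sigma_u$, since every factor $\hat\sigma_{u'}$ with $u'\ne u$ fixes $\ov{a}_u$ pointwise; hence $\bigcup_u\sigma_u$ is elementary.

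The second step is to prove Key Extension. I would reformulate it, via Fact~\ref{dcl} (as in Lemma~\ref{B_n}(2)), as a type-isolation statement: $\sigma$ extends as required precisely when, for every $\bar c\in\ov{a}_u$,
$$\tp\bigl(\bar c/\textstyle\bigcup_{v\subsetneq u}\ov{a}_v\bigr)\vdash \tp\bigl(\bar c/\textstyle\bigcup_{w\ne u}\ov{a}_w\bigr),$$
the union on the right ranging over $w\subseteq_{k-1} n$. The case $n=k$ of this implication is exactly Lemma~\ref{B_n}(2) applied to the $k$-element index set $u\cup\{j\}$, where $j$ is the unique index outside $u$; more generally, a single application of Lemma~\ref{B_n}(2) to $u\cup\{j\}$ shows that the left-hand type already isolates the type of $\bar c$ over the ``distance-one'' faces $(u\setminus\{i\})\cup\{j\}$. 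I would then enlarge the conditioning set one face at a time, in order of increasing distance $d(w):=|w\setminus u|$ from $u$, and chain these steps into the full implication using transitivity of isolation (from $p\vdash q$ and $q\vdash r$ conclude $p\vdash r$).

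The main obstacle is the outward propagation to the faces with $d(w)\ge 2$. Since $B(k)$ constrains only configurations lying inside a common $k$-element index set, and $u$ together with a far face $w$ spans more than $k$ coordinates, there is no single $k$-set carrying both $\ov{a}_u$ (hence $\bar c$) and $\ov{a}_w$; one must instead route the isolation through the intermediate faces already absorbed into the conditioning set, applying Lemma~\ref{B_n}(2) to a bridging $k$-set for which $w$ is one of the $(k-1)$-element faces and the remaining such faces have strictly smaller distance, and using the independence of $\{a_0,\dots,a_{n-1}\}$ to factor the interaction of $\bar c$ with $\ov{a}_w$ through $\ov{a}_{u\cap w}$. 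It is essential that each stage be a genuine isolation step rather than a stationarity or unique-nonforking argument: the latter is unavailable here, since unique nonforking extension of Lascar strong types amounts to stability, whereas $T$ is only assumed simple. Choosing the order of absorption and the bridging $k$-sets so that every step is a legitimate one-distance application of $B(k)$ is the crux of the argument.
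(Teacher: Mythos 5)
Your reduction of the lemma to the ``Key Extension'' property (each $\sigma_u$ separately extends to an automorphism of $\C$ fixing $\ov{a}_w$ pointwise for every other $w \subset_{k-1} n$, after which the $\hat\sigma_u$ are composed) is sound, and the base case is handled correctly. The gap is in your proof of Key Extension itself. First, isolation statements over different supersets do not combine: from $\tp(\bar c/X) \vdash \tp(\bar c/X\cup Y_1)$ and $\tp(\bar c/X) \vdash \tp(\bar c/X\cup Y_2)$ you cannot conclude $\tp(\bar c/X) \vdash \tp(\bar c/X\cup Y_1\cup Y_2)$; so even absorbing the distance-one faces $(u\setminus\{i\})\cup\{j\}$ one value of $j$ at a time is not justified. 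Second, and more seriously, for a face $w$ with $|w\setminus u|\ge 2$ there is no $k$-element subset of the original index set containing both $u$ and $w$, so $B(k)$ applied to ``bridging $k$-sets'' of original points never speaks about the element $\bar c\in\ov{a}_u$ at all; the proposed ``routing through intermediate faces'' would need exactly the transitivity (nonforking extensions of Lascar strong types being Lascar strong) that you correctly observe is equivalent to stability and hence unavailable. You flag this step as ``the crux,'' but you do not supply a mechanism for it, and I do not believe one exists within the constraints you impose on yourself.

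The missing idea — and the one the paper uses — is that $B(k)$ holds for \emph{arbitrary} $A$-independent collections of boundedly closed sets, so one may \emph{merge} several of the points $a_i$ into a single closed set and apply $B(k)$ (or, in the paper's induction, relative $(k,n)$-uniqueness) to the merged configuration. The paper inducts on $n$, merging one pair $\{a_{2i},a_{2i+1}\}$ at a time into $\ov{a}_{\{2i,2i+1\}}$ and composing roughly $k/2$ resulting elementary maps $\mu_i$. In fact your Key Extension follows from a single such merge: apply Lemma~\ref{B_n}(3) to the $k$-element independent collection $\{a_i : i\in u\}\cup\{\ov{a}_{n\setminus u}\}$ to extend $\sigma_u$ to $\hat\sigma_u\in\Aut(\C)$ fixing
$$\bigcup_{i\in u}\ov{a}_{(u\setminus\{i\})\cup(n\setminus u)}=\bigcup_{i\in u}\ov{a}_{n\setminus\{i\}}$$
pointwise, and note that every $w\subset_{k-1} n$ with $w\ne u$ omits some $i\in u$ and hence $\ov{a}_w$ lies in this union. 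Without some such merging device your propagation scheme cannot be completed.
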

\begin{proof}
By Theorem~\ref{B_n_rel_uniqueness} above, we know $B(k)$ implies  relative $(k,k)$-uniqueness.
Now for induction assume relative $(k,n)$-uniqueness over $A$.
To show  relative $(k,n+1)$-uniqueness, suppose that an $A$-independent set $\{a_0,\dots,a_{n}\}$
($a_i=\ov{a_iA}$) and maps $\{\sigma_u \mid u \subset_{k-1} n+1\}$ are given such that
\begin{enumerate}
\item
$\sigma_u$ is an automorphism of $\overline{a}_u$; and
\item
For all $v\subsetneq u$, $\sigma_u$ is an identity on $\overline{a}_v$.
\end{enumerate}
We want to show that  $\sigma=\bigcup \sigma_{u}$
is elementary.  Let $\ell$ be the least number such that $2 \ell > k$, and for $i<\ell$, let
$$
S_i:=\{u\subset_{k-1}n+1 \mid  \{2i,2i+1\}\not \subseteq u\}
$$
and let $U_i:=\{u\subset_{k-1}n+1 \mid  \{2i,2i+1\} \subseteq u\}$. Also
let $B_{i}:=\bigcup\{\ov{a_u}\mid  u\in S_i\}$ and let
$C_{i}:=\bigcup\{\ov{a_u}\mid  u\in U_i\}$. Note that $\dom(\sigma)=B_i\cup C_i$.
Now due to $B(k)$ over $A$,
for any $i$ and $v$ such that $\{2i,2i+1\} \subseteq v\subset_k n+1$,
\begin{multline*}
\bigcup \{\sigma_u\mid  u\in S_i\cap\bigcap_{j<i} U_j \mbox{ and }u\subsetneq v \}\\
\cup \bigcup \{\id_{\ov{a_u}}\mid  u\in U_i\cup(S_i \setminus \bigcap_{j<i} U_j),\mbox{ and }u\subsetneq v\}
\end{multline*}
is elementary (note that $S_i\cap\bigcap_{j<i} U_j =S_i$ when $i=0$.)
 Hence by applying the induction hypothesis to the $n$-element independent set
$I_{i} := \{a_0, \ldots, \overline{a}_{2i, 2i + 1}, \ldots, a_n\}$,
 we have that
$$
\mu_{i}:=\sigma\upharpoonright (B_{i}\cap\bigcap_{j<i}C_j)\cup
 \ \id_{C_{i}\cup(B_i \setminus \bigcap_{j<i}C_j)}
$$
is elementary. Also note that $\dom(\sigma)=\dom(\mu_{i})=B_{i}\cup C_{i}$.
It now follows that
$$
\mu_{i}\circ\mu_{i-1}\cdot\cdot\cdot\circ\mu_0
=\sigma\upharpoonright (\dom\sigma \setminus \bigcap\{C_j\mid j\leq i\})\cup \id_{\bigcap\{C_j \mid j\leq i\}}.$$
Hence
$\sigma=\mu_{\ell-1}\circ\mu_{\ell-2}\cdot\cdot\cdot\circ\mu_0$,
and  $\sigma$ is elementary.
\end{proof}

\begin{theorem}
\label{relonnap}
Let $T$ be a simple theory.
\be
\item If $T$ has $n$-amalgamation over $A$, then it has $(n,n-3)$-amalga\-ma\-tion
over $A$.
\item
 Assume $T$ has  $B(k+1)$ over $A$. Then
 for $n>k$,  $T$ has $(n,k)$-amalgamation over $A$ if and only if it has $(n,k-1)$-amalgamation
 over $A$.
\ee
\end{theorem}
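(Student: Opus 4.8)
The plan is to isolate two ``one-step'' reductions between skeletal amalgamation levels and then read off both parts. The first reduction, from level $k$ down to level $k-1$, will be \emph{unconditional}; the second, from level $k-1$ up to level $k$, will use $B(k+1)$. Throughout I take $A=\emptyset$. For an independent set $\{a_0,\dots,a_{n-1}\}$ and $v\subseteq n$ write $\ov{a}_v=\bdd(\{a_i:i\in v\})$; recall (as in the justification of Remark~\ref{rkonnap}(2), which is an instance of $B(2)$ applied over the base $\ov{a}_{v\cap v'}$) that $\ov{a}_v\cap\ov{a}_{v'}=\ov{a}_{v\cap v'}$ for all $v,v'$, so distinct $k$-faces meet only inside the $(k-1)$-skeleton.

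The downward step asserts: \emph{$(n,k)$-amalgamation implies $(n,k-1)$-amalgamation in every simple $T$.} Given a $(k-1)$-skeletal problem $\A\colon\Pm(n)\to\CC_A$, I first \emph{thicken} it to a $k$-skeletal problem $\A^+$ by setting $\A^+(v):=\bdd(\A(v))=\ov{a}_v$ for each $k$-face $v$ (an intrinsic set, being the closure of the vertices of $v$) and propagating these closures into each $\A^+(u)$, $|u|>k$, through the transition maps of $\A$. Since each $k$-face is thickened once and attached identically in every $u\supseteq v$, this is coherent and preserves independence, so $\A^+$ is a legitimate $k$-skeletal problem. Applying $(n,k)$-amalgamation yields a $k$-skeletal solution $\widehat{\A^+}\colon\P(n)\to\CC_A$, and truncating it to its $(k-1)$-skeleton gives a $(k-1)$-skeletal functor on $\P(n)$ whose restriction to $\Pm(n)$ is again $\A$ — the desired solution. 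Iterating this step from $n$-amalgamation, i.e.\ $(n,n-1)$-amalgamation, produces $(n,n-1)\Rightarrow(n,n-2)\Rightarrow(n,n-3)$ with no boundary hypotheses, which is part (1); the first implication is exactly the forward direction of Remark~\ref{rkonnap}(2).

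The upward step asserts: \emph{if $B(k+1)$ holds and $n>k$, then $(n,k-1)$-amalgamation implies $(n,k)$-amalgamation.} Given a $k$-skeletal problem $\A$, truncate to its $(k-1)$-skeleton $\A_0$ and solve it by $(n,k-1)$-amalgamation, obtaining a $(k-1)$-skeletal $\hat\A_0\colon\P(n)\to\CC_A$; by the characterization that a functor on $\Pm(n)$ is untwistable iff it has a solution, I may assume $\hat\A_0$ is untwisted, so that $\hat\A_0(n)$ is literally the $(k-1)$-skeleton $\bigcup_{|v|\le k-1}\ov{a}_v$ of an independent system realized in $\C$. It remains to adjoin the $k$-faces. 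For each $k$-face $v$ (here $|v|=k<n$, so $v\in\Pm(n)$), functoriality of $\A$ together with the global untwisting of the $(k-1)$-skeleton pins down a single attachment $\tau_v\in\Aut(\ov{a}_v/\text{its }(k-1)\text{-skeleton})$, independent of which $(n-1)$-face $s\supseteq v$ is used, since any two such $s$ already agree on $\A(s\cap s')$ and hence on the placement of $\ov{a}_v$. By the disjointness above, $\bigcup_{|v|=k}\tau_v$ together with the identity on $\hat\A_0(n)$ is a union of automorphisms of the $k$-faces, each fixing all proper subfaces; relative $(k+1,n)$-uniqueness — which holds by Lemma~\ref{relknuni} because $B(k+1)$ does — makes this union elementary. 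Setting $\hat\A(n):=\bigcup_{|v|\le k}\ov{a}_v$ with the induced maps then gives a $k$-skeletal solution restricting to $\A$, and combining the two steps yields the equivalence in part (2).

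The main obstacle is the elementarity in the upward step. I expect the delicate point to be verifying that the hypotheses of relative $(k+1,n)$-uniqueness are met on the nose — that after untwisting $\hat\A_0$ the transported maps $\tau_v$ really are automorphisms of the $\ov{a}_v$ fixing every proper subface pointwise, and that they are unambiguously determined by $\A$ across the various $s\supseteq v$ — so that Lemma~\ref{relknuni} applies verbatim. The remaining bookkeeping (coherence of the extensions across faces sharing an $(n-2)$-set, and the checks that $\hat\A$ is $k$-skeletal and restricts to $\A$) is routine given the disjointness; note that in the upward step $B(k+1)$ is genuinely needed for this elementarity, whereas the downward step, and therefore part (1), requires no boundary property at all.
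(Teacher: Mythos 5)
Your two-step reduction scheme has the right outline, but the claim on which both parts rest---that the downward step ``$(n,k)$-amalgamation implies $(n,k-1)$-amalgamation'' is \emph{unconditional}---is false for $k\le n-3$, and this is a genuine gap rather than an omitted routine check. The problem is the thickening itself. To turn a $(k-1)$-skeletal problem $\A$ on $\Pm(n)$ into a $k$-skeletal one you must choose, for every pair $v\subset u$ in $\Pm(n)$ with $|v|=k$, an elementary extension of $\A_{v,u}$ to the closed face $\bdd(\A(v))$; such an extension is never unique (the choices differ by automorphisms of $\bdd(\A(v))$ over the $(k-1)$-skeleton $\A(v)$), and the choices are \emph{not} independent of one another: for $v\subset u\subset w$ and $v\subset u'\subset w$, functoriality forces $\A^+_{u,w}\circ\A^+_{v,u}=\A^+_{u',w}\circ\A^+_{v,u'}$, while each $\A^+_{u,w}$ must be a \emph{single} elementary map on the entire union of closed $k$-faces of $u$. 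Reconciling an arbitrary extension with the forced values on the faces it shares with previously handled sets produces a family of automorphisms of closed $k$-faces fixing their $(k-1)$-skeletons pointwise, and the elementarity of the union of these corrections with the identity elsewhere is precisely relative $(k+1,n)$-uniqueness, i.e.\ $B(k+1)$ via Lemma~\ref{relknuni} and Theorem~\ref{B_n_rel_uniqueness}. This is exactly where the paper spends its effort in the $(\Rightarrow)$ direction of part (2) (the inductive construction of the functors $\hat\A^m$ with the correcting maps $\mu_j$); your phrase ``attached identically in every $u\supseteq v$, \dots\ this is coherent'' skips the whole issue. Had the downward step been free, the hypothesis $B(k+1)$ in that direction of (2) would be superfluous and Corollary~\ref{Bn_simplicity} would not need $B$-simplicity at all.

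Part (1) nevertheless survives, but only because the two instances you actually use, $k=n-1$ and $k=n-2$, are exactly the ones where no coherence condition can arise: when thickening an $(n-3)$-skeletal problem, the members of $\Pm(n)$ properly containing an $(n-2)$-set are just the $(n-1)$-sets, which form an antichain, so each $\hat\A_{v,w}$ may be chosen freely and functoriality is checked only against the unmodified maps from sets of size at most $n-3$. This is the paper's proof of (1), and it is the reason the statement stops at $(n,n-3)$ rather than descending further. Your upward step has the correct shape (truncate, solve by $(n,k-1)$-amalgamation, re-attach the closed $k$-faces, and invoke Lemma~\ref{relknuni} for elementarity), but the assertion that the attachment $\tau_v$ is ``pinned down'' independently of the $(n-1)$-face $s\supseteq v$ is also not true as stated: the solution of the truncated problem determines the embeddings into $\hat\A(n)$ only on the $(k-1)$-skeletons, so the placements of $\bdd(\A_s(v))$ induced by different faces $s$ a priori disagree by automorphisms over the $(k-1)$-skeleton, and one must correct face by face exactly as in the paper's recursive construction of the maps $\hat\B(w_i,n)$. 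In short: the backward direction of (2) needs its correction step made explicit, and the forward direction of (2) needs to be re-proved using $B(k+1)$ rather than deduced from a nonexistent unconditional lemma.
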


\begin{proof}
(1) Assume $T$ has $n$-amalgamation over $A$. So it has $(n,n-2)$-amalgamation
over $A$. To show $(n,n-3)$-amalgamation, assume
an $(n-3)$-skeletal functor $\A:\CP^-(n)\to \CC_A$ is given. Due to $(n,n-2)$-amalgamation,
it suffices to show that we can extend $\A$ to an $(n-2)$-skeletal functor $\hat \A:\CP^-(n)\to \CC_A$. But this follows by a similar reason as in Remark~\ref{rkonnap} above. Namely,
for $u\subseteq n$ with $|u|=n-3$, let $\hat \A \upharpoonright \{u\mid u\subset_{\leq n-3}n\}
= \A\upharpoonright \{u\mid  u\subset_{\leq n-3}n\}$. Then for   $v\subset w\subset n$ with $|v|=n-2$ and $|w|=n-1$, simply take $\hat \A(v):=\bdd(\A(v))$ and the elementary map $\hat \A_{v,w}$ to be \emph{any} extension of $\A_{v,w}$.  To see that $\hat \A$ forms a functor, it suffices to check that
for $u\subset v\subset w\subset n$ with $|u|\leq n-3, |v|=n-2, |w|=n-1$, we have $\hat \A_{v,w} \circ \hat \A_{u,v}=\hat \A_{u,w}$. But $\hat \A_{u,v}= \A_{u,v}, \hat \A_{u,w} = \A_{u,w}$, and $\hat  \A_{v,w} \circ \A_{u,v}= \A_{u,w}$.  Hence $\hat \A$ is an $(n-2)$-skeletal functor.

(2) ($\Rightarrow$) Here is the basic idea: given some $(k-1)$-skeletal functor $\A: \P^-(n) \rightarrow \CC_A$ which we want to amalgamate, we convert it into a $k$-skeletal functor $\hat \A$ which ``extends'' $\A$ in the natural sense, and then we can amalgamate $\hat \A$, which will naturally lead to  solution for $\A$ as well.  Although it is clear what the sets $\hat \A(u)$ should be, it is less clear that we can extend the transition maps in $\A$ to $\hat \A$ in a coherent manner, but the hypothesis of $B(k+1)$ (plus  Lemma~\ref{relknuni}) ensures that we can do this.

We assume $(n,k)$-amalgamation and $B(k+1)$ over $A$, and let
a $(k-1)$-skeletal functor $\A:\CP^-(n)\to \CC_A$.
We will show that $\A$ extends
to a $k$-skeletal functor $\hat \A:\CP^-(n)\to \CC_A$.  For
$u\subset  n$, we let $$\hat \A(u)=\bigcup
\left\{\overline{\A_u(\{i_1\}), \ldots, \A_u(\{i_k\})} \mid  \{i_1, \ldots,i_k\}\subseteq u\right\}.$$

To finish the proof, we need to define the transition maps $\hat \A_{u,v}$, and for this it suffices to show:

\begin{claim}
For any $m$ such that $1 \leq m \leq n - k - 1$, there is an independent functor $\hat \A^m : \CP_{k+m}(n) \rightarrow \CC_A$ (where $\CP_{k+m}(n)$ is the set of all $u \subseteq n$ such that $|u| \leq k + m$) such that:
\begin{enumerate}
\item $\hat \A^m (u) = \hat \A(u)$ for any $u \in \CP_{k+m}(n)$; and
\item $\hat \A^m$ extends $\A$, in the sense that whenever $u \subseteq v \subseteq w \subseteq n$ and $|u| \leq k - 1$, then $$\hat \A^m_{v,w} \upharpoonright \A_v(u) = \A_{v,w} \upharpoonright \A_v(u).$$
\end{enumerate}
\end{claim}

\begin{proof}
For the base case, if $u\subset_{k} v\subset_{k+1} n$, let $\hat \A^1_{u,v}: \hat \A(u)\to \hat \A(v)$ be any elementary map extending $\A_{u,v} : \A(u)\to \A(v)$. Then by a similar argument as in the proof of (1), it follows that $\hat \A^1:\{v\mid  v\subset_{\leq k+1} n\}\to \CC_A$ is a $k$-skeletal functor.
For the induction step, assume $\hat \A^m$ extends $\A$ on
$\{v\mid  v\subset_{\leq k+m} n\}$ where $k+m<n$.
We want to extend $\hat \A^m$ to  $\{w_0,\dots ,w_{\ell}\}=\{w\mid  w\subset_{k+m+1} n\}$.
Fix some $s$ such that $0\leq s\leq \ell$ and let
$\{v_0,\dots ,v_r\}=\{v\mid v\subset_{k+m} w_s\}$.  For $i$ such that $0 \leq i \leq r$, we will define the maps $\hat \A^{m+1}_{v_i, w_s}$ by a recursion on $i$.
First we let $\hat \A^{m+1}_{v_0,w_s}$ be an arbitrary elementary map extending $\A_{v_0,w_s}$.
By induction, assume $\hat \A^{m+1}_{v_0,w_s}, \ldots,\hat \A^{m+1}_{v_{i},w_s}$ have been compatibly chosen (for some $i < r$).  We shall define $\hat \A^{m+1}_{v_{i+1},w_s}$. Notice that for $j\leq i$,
it should be compatible with $\hat \A^{m+1}_{v_j\cap v_{i+1},w_s}$, which  is already determined as
$\hat \A^{m+1}_{v_j,w_s}\circ \hat \A^m_{v_j\cap v_{i+1},v_j}.$
Hence first consider an arbitrary  elementary map
$f:\hat \A (v_{i+1})\to\hat \A(w_s)$ extending $\A_{v_{i+1},w_s}$.
Now for each $j \leq i$, there is elementary $\mu_j:\hat \A^m_{w_s}(v_j\cap v_{i+1})
\to \hat \A^m_{w_s}(v_j\cap v_{i+1})$ such that
$$\mu_j\circ f\circ \hat \A^m_{v_j\cap v_{i+1}, v_{i+1}} \upharpoonright \hat \A (v_j\cap v_{i+1})
=  \hat \A^m_{v_j\cap v_{i+1}, w_{s}} \upharpoonright \hat \A(v_j\cap v_{i+1}).$$ Due to Lemma~\ref{relknuni},
$\mu:=\bigcup \{\mu_j\mid  j\leq i\}\cup  \id_B$
is elementary where $B=\hat \A(w_s)\setminus \bigcup_j \dom(\mu_j)$. Then we let
$\hat \A^{m+1}_{v_{i+1},w_s} = \mu\circ f$, which extends $\A_{v_{i+1},w_s}$. We have defined all $\hat \A_{v_i,w_s}$ for $i\leq k+m$.
Then for $v\subset v_i$, let $\hat \A^{m+1}_{v,w_s} = \hat \A^{m+1}_{v_i,w_s}\circ \hat \A^m_{v,v_i}$. By the  construction, this does not depend on the choice of $v_i$.  Then
for $u\subset v(\subseteq v_i)\subset w_s$,
$$\hat \A^{m+1}_{u,w_s}=\hat \A^{m+1}_{v_i,w_s}\circ \hat \A^m_{u,v_i}$$ $$=
\hat \A^{m+1}_{v_i,w_s}\circ \hat \A^m_{v,v_i}\circ \hat \A^m_{u,v}=\hat \A^{m+1}_{v,w_s}\circ\hat \A^{m+1}_{u,v}.$$
Therefore $\hat \A^{m+1}:\CP(w_s)\to \CC_A$ forms a $k$-skeletal functor.
Now  the union of the functors $\hat \A^{m+1}:\bigcup_{s\leq \ell}\CP(w_s)=\{w\mid w\subset_{\leq k+m+1}n\}\to \CC_A$ clearly forms a $k$-skeletal functor extending $\hat \A^m$, as desired.

\end{proof}

($\Leftarrow$) For this direction, the idea is as follows: we assume that $(n, k-1)$-amalgamation and $B(k+1)$ both hold over $A$ and that we are given a $k$-skeletal functor $\hat \A: \CP^-(n) \rightarrow \CC_A$ which we want to amalgamate.  The functor $\hat \A$ has a canonical ``restriction'' to a $(k-1)$-skeletal functor $\A: \CP^-(n) \to \CC_A$, and by $(n, k-1)$-amalgamation, this has a solution $\B: \CP(n) \to \CC_A$.  Then we extend $\B$ to a $k$-skeletal $\hat \B$, and just as in the argument for $\Rightarrow$ above, we can use $B(k+1)$ to ``untwist'' $\hat \B$ as necessary to ensure that it is a solution to the original amalgamation problem $\hat \A$.

We now give some details.  Let $\hat \A$, $\A$, and $\B$ be as in the previous paragraph.  It is clear what set $\hat \B(n)$ needs to be, and for $u \subseteq v \subsetneq n$, we need to have $\hat \B_{u,v} = \A_{u,v}$, so we only need to define $\hat \B_{u,n}$ for each $u \subsetneq n$. We shall first define $\hat \B(w_i,n)$, where $\{w_0, \ldots, w_{n-1}\}=\{w\mid w\subset_{n-1}n\}$, such that $\hat \B(w_i, n)$ extends $\B(w_i,n)$.  We proceed by induction on $i < n$.  For the base case, we may let $\hat \B(w_0,n)$
be any appropriate extension of $\B(w_0,n)$.  By induction, we assume that $\hat \B(w_0,n),\dots ,\hat \B(w_i,n)$ are compatibly chosen and $i+1 < n$. Then for $j \leq i$ and  $w_j\cap w_i(\subset_{n-2}n)$, $\hat \B(w_{i+1},n)$ should be compatible with $\hat \B(w_j\cap w_{i+1},n)$, which must equal $\hat \B(w_j,n)\circ \hat \A(w_j\cap w_{i+1},w_j).$
For this, consider an arbitrary  elementary map
$f:\hat \A(w_{i+1})\to \hat \B(n)$ extending $ \B(w_{i+1},n)$. As in the proof of $\Rightarrow$,
for each $j \leq i$, there is an elementary $\mu_j:\hat \B_{n}(w_j\cap w_{i+1})
\to \hat \B_{n}(w_j\cap w_{i+1})$ such that
$$\mu_j\circ f\circ \hat \A(w_j\cap w_{i+1}, w_{i+1})\upharpoonright \hat \A(w_j\cap w_{i+1})
=  \hat \B(w_j\cap w_{i+1}, n)
\upharpoonright \hat \A(w_j\cap w_{i+1}).$$
Then due to Lemma~\ref{relknuni},
$\mu:=\bigcup \{\mu_j\mid j\leq i\}\cup  \id_B$
is an elementary map, where $B=\hat \B(n)\setminus \bigcup_j \dom(\mu_j)$.
Then we let
$\hat \B(w_{i+1},n)=\mu\circ f$,
which extends $\B(w_{i+1},n)$.

To finish, we need to define $\hat \B(u,n)$ for $u$ an arbitrary subset of $n$.  If $u \subseteq w_i$, then we simply let $\hat \B(u,n) = \hat \B(w_i, n) \circ \hat \A(u, w_i)$.  That this is well-defined, and that this yields a functor, follow exactly as in the proof of $\Rightarrow$.
\end{proof}

\begin{corollary}
\label{Bn_simplicity}
\be
\item
  If $T$ is $B(n-2)$-simple having $n$-amalgamation, then
 it has $(n,k)$-amalgamation for each $0<k<n$. Conversely if
 $T$ is $B(n-1)$-simple having $(n,1)$-amalgamation, then it has
 $(n,k)$-amalgamation for each $0<k<n$.
 \item If $T$ is $B(k)$-stable (i.e. stable having $B(3),\dots,B(k)$), then $(n,k-1)$-amalgamation
 holds for $n\geq k$.
 \ee
\end{corollary}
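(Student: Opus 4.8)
The plan is to treat $(n,k)$-amalgamation as a ladder indexed by $k$ and to move along it one rung at a time using the equivalence in Theorem~\ref{relonnap}(2), where each rung between level $k-1$ and level $k$ costs exactly the boundary property $B(k+1)$ (and requires $n>k$). Throughout I use that $n$-amalgamation is by definition $(n,n-1)$-amalgamation and that $(n,n-1)$- and $(n,n-2)$-amalgamation coincide by Remark~\ref{rkonnap}(2).

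For the first assertion of (1), I begin from $n$-amalgamation, i.e.\ $(n,n-2)$-amalgamation, and apply Theorem~\ref{relonnap}(1) to obtain $(n,n-3)$-amalgamation at no cost. The point of this free step is that descending directly from $(n,n-2)$ to $(n,n-3)$ via Theorem~\ref{relonnap}(2) would require $B(n-1)$, which a $B(n-2)$-simple theory need not have; Theorem~\ref{relonnap}(1) circumvents exactly this. From $(n,n-3)$ I then descend to $(n,1)$, the step from level $k$ to level $k-1$ invoking $B(k+1)$; the properties consumed are $B(n-2),B(n-3),\dots,B(3)$, all available by $B(n-2)$-simplicity, and the side condition $n>k$ holds automatically since $k\le n-3$. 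Together with $(n,n-2)$ and $(n,n-1)$ this yields $(n,k)$-amalgamation for every $0<k<n$.

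For the converse in (1) I climb instead of descend, starting from the hypothesis $(n,1)$-amalgamation and applying Theorem~\ref{relonnap}(2) repeatedly up to $(n,n-2)$; the step from level $k-1$ to level $k$ uses $B(k+1)$ and $n>k$, so the topmost step into level $n-2$ consumes $B(n-1)$. This is precisely why the converse needs the stronger hypothesis $B(n-1)$-simplicity rather than $B(n-2)$: here there is no analogue of the free jump furnished by Theorem~\ref{relonnap}(1). Since $(n,n-2)$ and $(n,n-1)$ agree, all $(n,k)$-amalgamation for $0<k<n$ follow. Part (2) is the same ascent with stability supplying the bottom rung: Remark~\ref{rkonnap}(3) gives $(n,1)$-amalgamation for every $n$, and climbing from $(n,1)$ to $(n,k-1)$ consumes exactly $B(3),\dots,B(k)$, all supplied by $B(k)$-stability, while the side condition $n>k-1$ is guaranteed by $n\ge k$.

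I expect no essential difficulty beyond careful index bookkeeping, since Theorem~\ref{relonnap} does the real work. The one point deserving attention is matching, at each rung, the index of the boundary property being spent --- $B(k+1)$ when passing between levels $k-1$ and $k$ --- against the range actually available; it is this matching, together with the single free descent furnished by Theorem~\ref{relonnap}(1), that pins down the exact hypotheses $B(n-2)$, $B(n-1)$, and $B(k)$ in the three cases. Low-dimensional boundary cases cause no trouble because $B(2)$ holds in every simple theory and the descent or ascent simply terminates earlier.
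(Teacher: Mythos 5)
Your proposal is correct and follows the intended route: the paper states this corollary without proof precisely because it is the chain of applications of Remark~\ref{rkonnap}(2)--(3), Theorem~\ref{relonnap}(1), and repeated use of the equivalence in Theorem~\ref{relonnap}(2) that you describe, with the indices matched exactly as you have them. Your observation that Theorem~\ref{relonnap}(1) supplies the one descent (from $(n,n-2)$ to $(n,n-3)$) that would otherwise cost $B(n-1)$ is exactly why the forward direction of (1) needs only $B(n-2)$-simplicity while the converse needs $B(n-1)$-simplicity.
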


\section{Examples}

This section points out the obstacles in generalizing the construction of a definable groupoid from failure of $3$-uniqueness to simple unstable theories.

\subsection{\boldmath $B(3)$ can fail while 4-existence holds}

Here we give an example of a simple (in fact, supersimple $SU$-rank 1) theory with a symmetric witness to failure of $3$-uniqueness for which $4$-existence still holds. The example shows that (1) failure of $3$-uniqueness (or even failure of $B(3)$) need not imply the failure of 4-existence if 2-uniqueness does not hold; and (2) without stability, failure of $B(3)$ on a Morley sequence does not imply that there is a definable groupoid whose objects are the elements of the sequence.

Each model contains three sorts:
an infinite set $I$, the set of 2-element subsets $K:=[I]^2$,
and a double cover $G^*$ of $K$. We have the membership function,
so that the sets $I$ and $K$ are interdefinable, and the elements of
$G^*$ have the form $\langle \{a,b\},\delta\rangle$,
where $a,b\in I$ and $\delta=0$ or $\delta=1$. There is a projection
$\pi$ from $G^*$ to $K$. It is easy to see that this part of the
example is totally categorical and has the natural quasi-finite axiomatization $T_1$.

Finally, we have a ternary predicate $\Q$ satisfying the following axioms:
\begin{enumerate}
\item
$\Q(x,y,z)$  implies that $x,y,z\in G^*$ and that  $\{\pi(x),\pi(y),\pi(z)\}$ are \emph{compatible}, i.e., form all the 2-element subsets of a 3-element set;
\item
$\Q$ is symmetric with respect to all the permutations of its arguments;
\item
if $\pi(x)$, $\pi(y)$, and $\pi(z)$ form a compatible triple and if
$x'$ is the other element in the fiber above $\pi(x)$, then
$$
\Q( x,y,z)\iff \lnot \Q( x',y,z).
$$
\item
for each $2\le n<\omega$ and every $W\subset [n]^2$ the following holds:
\begin{multline*}
\forall a_0,\dots a_{n-1}, \in I\ \forall x_{ij} \in \pi^{-1}(a_i,a_j)
\\
 \exists b_W\in I\setminus \{a_0,\dots,a_{n-1}\}\ \exists y_i^W \in \pi^{-1}(a_i,b_W)
\\
\bigwedge_{(i,j)\in W} \Q(x_{ij},y^W_i,y^W_j)
\land
\bigwedge_{(i,j)\notin W} \lnot \Q(x_{ij},y^W_i,y^W_j).
\end{multline*}
\end{enumerate}

Let $T$ be the theory $T_1$ together with the axioms (1)--(4) above.

\begin{claim}\label{cl1}
The theory $T$ is consistent.
\end{claim}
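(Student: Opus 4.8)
The plan is to build a single model of $T$ by hand. Since $T_1$ is totally categorical, I would fix the concrete structure with $I$ a countably infinite set, $K=[I]^2$, $G^*=K\times\{0,1\}$, projection $\pi(\langle\{a,b\},\delta\rangle)=\{a,b\}$, and the membership function relating $I$ and $K$; all that remains is to interpret $\Q$ so that axioms (1)--(4) hold. The observation that drives everything is that axioms (1)--(3) force $\Q$ to come from a ``charge'' function on triangles. Declare $\Q(x,y,z)$ false whenever $x,y,z$ are not a compatible triple in $G^*$; on a compatible triple, whose projections are the three edges $\{a,b\},\{b,c\},\{a,c\}$ of a $3$-element set $t=\{a,b,c\}$, axiom (3) says that flipping the fiber coordinate of any one argument negates $\Q$, while axiom (2) says $\Q$ is symmetric. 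Hence $\Q(\langle\{a,b\},\delta_1\rangle,\langle\{b,c\},\delta_2\rangle,\langle\{a,c\},\delta_3\rangle)$ depends only on the parity $\delta_1+\delta_2+\delta_3\pmod 2$, so it is determined by a single bit $\epsilon_t\in\mathbb{Z}/2$ via
$$\Q(\langle\{a,b\},\delta_1\rangle,\langle\{b,c\},\delta_2\rangle,\langle\{a,c\},\delta_3\rangle)\iff \delta_1+\delta_2+\delta_3+\epsilon_t\equiv 1\pmod 2.$$
Conversely, any charge function $\epsilon:[I]^3\to\mathbb{Z}/2$ defines through this formula a predicate $\Q$ satisfying (1)--(3). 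Thus the problem reduces to choosing $\epsilon$ so that (4) holds.

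Next I would translate axiom (4) into an extension property of $\epsilon$. Fix $a_0,\dots,a_{n-1}\in I$, fiber representatives $x_{ij}=\langle\{a_i,a_j\},\delta_{ij}\rangle$, and $W\subseteq[n]^2$. Choosing the trivial fiber elements $y_i^W=\langle\{a_i,b_W\},0\rangle$, the parity formula gives $\Q(x_{ij},y_i^W,y_j^W)\iff \delta_{ij}+\epsilon_{\{a_i,a_j,b_W\}}\equiv 1\pmod 2$, so the demand $\Q(x_{ij},y_i^W,y_j^W)\iff(i,j)\in W$ becomes the prescription $\epsilon_{\{a_i,a_j,b_W\}}= 1+\delta_{ij}+[(i,j)\in W]\pmod 2$ for each pair. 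As $W$ and the $\delta_{ij}$ vary over all possibilities, the right-hand side ranges over every function $c:[\{a_0,\dots,a_{n-1}\}]^2\to\mathbb{Z}/2$. Hence $\Q$ satisfies axiom (4) as soon as $\epsilon$ has the following property: for every finite $A\subseteq I$ and every target $c:[A]^2\to\mathbb{Z}/2$ there is a vertex $b\in I\setminus A$ with $\epsilon_{\{a,a',b\}}=c(\{a,a'\})$ for all $\{a,a'\}\in[A]^2$.

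Finally I would produce a charge function $\epsilon$ with this extension property by a genericity argument. One clean option is a back-and-forth construction: enumerate the countably many requirements $(A,c)$ and process them one at a time, each time choosing a fresh vertex $b$ not occurring in any previously constrained triangle and setting $\epsilon_{\{a,a',b\}}:=c(\{a,a'\})$ for $a,a'\in A$; because $b$ is new these triangles have not been assigned before, so no conflict arises, and any triangle left undefined at the end is set to $0$. Equivalently, a random $\epsilon$ with independent uniform charges works almost surely: for fixed $(A,c)$ the events ``$b$ realizes $c$'' are independent over $b\in I\setminus A$ with constant positive probability, so by Borel--Cantelli infinitely many $b$ succeed, and one intersects over the countably many requirements. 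The predicate $\Q$ built from such an $\epsilon$ satisfies (1)--(4), so the resulting structure is a model of $T$, establishing consistency.

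The only genuinely substantive step is the parity reduction of the first paragraph, i.e.\ recognizing that (1)--(3) force $\Q$ to be the parity of the fiber coordinates twisted by a $\mathbb{Z}/2$-valued charge on each $3$-element set. Once that is in hand, axiom (4) collapses to a routine extension property that a generic charge function meets, and the remainder is bookkeeping to confirm that distinct requirements can be satisfied on disjoint fresh triangles without conflict.
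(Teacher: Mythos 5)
Your construction is correct and is essentially the paper's own proof: your charge function $\epsilon$ with the stated extension property is exactly (the characteristic function of) a random ternary graph $R$ on $I$, which the paper uses to define $\Q$ by the same parity formula, leaving the verification of axiom (4) as routine. (There is a harmless off-by-one in your prescription --- it should read $\epsilon_{\{a_i,a_j,b_W\}}=\delta_{ij}+[(i,j)\in W]\pmod 2$ --- but since the right-hand side still ranges over all of $\mathbb{Z}/2$ as the data vary, the argument is unaffected.)
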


\begin{proof}
We build a structure that satisfies $T$. We begin with an (infinite) structure $M_1$ that satisfies $T_1$. Recall that, for $a\ne b\in I$ an element $x$ in the fiber $\pi^{-1}(ab)$ has the form $\langle a,b,\delta\rangle$, where $\delta\in \{0,1\}$.

Let $R$ be a random ternary graph on $I(M_1)$. Now we define $Q$. For each three-element subset $\{a_0,a_1,a_2\}$ of $I$, for elements $x_{ij}\in \pi^{-1}(a_ia_j)$, $0\le i<j\le 2$, define $Q(x_{01},x_{02},x_{12})$ to hold if and only if one of the following holds:
\begin{gather*}
\lnot R(a_0,a_1,a_2) \textrm{ and } \delta_{01}+\delta_{02}+\delta_{12}=0
\textrm{ modulo 2}
\\
\textrm{or}
\\
R(a_0,a_1,a_2) \textrm{ and } \delta_{01}+\delta_{02}+\delta_{12}=1
\textrm{ modulo 2}.
\end{gather*}
Let $M$ be the reduct of the above structure to the language of $T$ (that is, we remove the predicate $R$ from the language). It is obvious that the axioms (1)--(3) hold in $M$. Checking that (4) holds is also routine.
\end{proof}

\begin{notation}
Let $M$ be a model of $T$ and let $A\subset I(M)$. We call the set $\pi^{-1}([A]^2)$ a \emph{closed substructure of $M$ generated by $A$}.
\end{notation}

The following property is immediate from the axioms of $T$:

\begin{claim}\label{cl2}
Let $M$ be a model of $T$. Then for any $A\subset I(M)$, for any choice of elements $x_{ab}\in \pi^{-1}(a,b)$, $a,b\in A$, the isomorphism type of the closed substructure generated by $A$ is uniquely determined by the isomorphism type of the set $\{x_{ab}\mid a,b\in A\}$.

That is, if $A$ is a subset of $I(M)$, $X_A =\{x_{ab}\in \pi^{-1}(a,b)\mid a\ne b\in A\}$, and $f$ is a partial elementary map defined on $A\cup X_A$, then $f$ can be uniquely extended to a partial elementary map on the closed substructure generated by $A$.
\end{claim}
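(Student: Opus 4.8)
The plan is to reduce the claim to the general fact that a partial elementary map extends uniquely to a partial elementary map on the definable closure of its domain; the real content is simply to check that the entire closed substructure generated by $A$ lies in $\dcl(A\cup X_A)$.

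First I would describe the universe of the closed substructure $\pi^{-1}([A]^2)$, together with the interdefinable sorts $I$ and $K$: it consists of the points of $A$, the pairs in $[A]^2$, and, for each $\{a,b\}\subseteq A$, the two points of the fiber $\pi^{-1}(\{a,b\})$, namely the chosen representative $x_{ab}\in X_A$ and the unique other element $x'_{ab}$. I would then verify that each of these is definable over $A\cup X_A$. Each pair $\{a,b\}\in K$ is the unique element of $K$ whose members (under the membership function) are $a$ and $b$, so $\{a,b\}\in\dcl(a,b)$. The representative $x_{ab}$ belongs to $X_A$ by choice. The flip $x'_{ab}$ is the unique $w$ with $\pi(w)=\pi(x_{ab})$ and $w\neq x_{ab}$; since every fiber of $\pi$ has exactly two elements, this formula isolates $x'_{ab}$, whence $x'_{ab}\in\dcl(x_{ab})$. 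Therefore the closed substructure is contained in $\dcl(A\cup X_A)$.

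Granting this, the second (and main) assertion is immediate. Given a partial elementary map $f$ on $A\cup X_A$, extend it to $\dcl(A\cup X_A)$ in the canonical way: if $d$ is isolated over $A\cup X_A$ by a formula $\varphi(x,\bar c)$, let $\bar f(d)$ be the unique realization of $\varphi(x,f(\bar c))$. This $\bar f$ is a well-defined partial elementary map, and it is the only such extension, since any elementary map must send a definable element to the element defined by the image of the isolating formula. Restricting $\bar f$ to the closed substructure yields the desired extension; concretely it forces $\bar f(\{a,b\})=\{f(a),f(b)\}$ and sends $x'_{ab}$ to the flip of $f(x_{ab})$. The first assertion then follows at once: two choices of representatives with the same isomorphism type are related by a partial elementary map in the monster, which by the above extends uniquely to an isomorphism of the two closed substructures.

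I do not expect a serious obstacle here; the one point requiring care is that the fibers of $\pi$ have exactly two elements, which is what makes the flip definable and hence places the whole closed substructure inside $\dcl(A\cup X_A)$. Note that axiom~(3), governing the behaviour of $\Q$ under flipping, need not be invoked separately: preservation of $\Q$ on the flips is automatic because the canonical extension $\bar f$ is elementary, so the $\Q$-relations among the flips are carried along with everything else.
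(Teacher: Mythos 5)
The paper offers no proof of this claim at all---it is introduced with ``the following property is immediate from the axioms of $T$''---so there is no argument to compare yours against line by line. Your central observation is correct and does establish the ``that is'' formulation: every fiber of $\pi$ has exactly two points, so the flip $x'_{ab}$ is the unique element of $\pi^{-1}(\{a,b\})$ other than $x_{ab}$, each pair $\{a,b\}$ is definable from $a,b$ via the membership function, hence the whole of $\pi^{-1}([A]^2)$ lies in $\dcl(A\cup X_A)$, and a partial elementary map extends uniquely (and elementarily) to the definable closure of its domain.

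There is, however, a soft spot in your closing paragraph. The claim is used in the proof of $\omega$-categoricity to extend maps that are only \emph{isomorphisms between finite closed substructures}, not maps already known to be elementary; and your derivation of the first formulation from the second rests on the assertion that two copies of $A\cup X_A$ with the same isomorphism type are ``related by a partial elementary map in the monster.'' That is precisely the homogeneity statement the back-and-forth is in the process of establishing, so for that application the reduction is circular. The non-circular argument is the direct one: extend $f$ by sending each flip $x'_{ab}$ to the flip of $f(x_{ab})$ and verify by hand that every instance of $\Q$ is preserved. Since each element of a compatible triple is either a chosen representative or its flip, axiom (3), $\Q(x,y,z)\iff\lnot\Q(x',y,z)$, determines $\Q$ on all of $\pi^{-1}([A]^2)$ from its values on $X_A$, and this is exactly what makes the extension an isomorphism. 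So, contrary to your parenthetical remark, axiom (3) is not dispensable; it carries the entire content of the quantifier-free version of the claim, which is the version actually needed downstream.
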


Now an easy back-and-forth argument gives

\begin{proposition}
The theory $T$ is $\omega$-categorical.
\end{proposition}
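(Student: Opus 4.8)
The plan is to prove $\omega$-categoricity by running a back-and-forth between any two countable models $M,N\models T$; since every model is infinite (the sort $I$ is infinite) and the back-and-forth can be started from the empty map, producing an isomorphism $M\cong N$ is exactly the assertion that $T$ has a unique countable model. The key simplification, which makes the back-and-forth "easy", is Claim~\ref{cl2}: a finite partial isomorphism between closed substructures is completely determined by, and can be reconstructed from, a bijection between the underlying finite subsets of $I$ together with a matching of one chosen representative $x_{ab}$ in each fiber $\pi^{-1}(a,b)$, provided the matching preserves $\Q$ on those chosen representatives. So I will work with finite \emph{skeletons} $(A,X_A)$, where $A\subseteq I(M)$ is finite and $X_A=\{x_{ab}\mid a\neq b\in A\}$ selects one point of each fiber, and build an increasing chain of skeleton-isomorphisms, lifting each to closed substructures via Claim~\ref{cl2}. (The same skeleton analysis also shows the quantifier-free type of any tuple on its generated closed substructure is one of finitely many, which is the Ryll--Nardzewski reformulation, but the back-and-forth is the more transparent route.)

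The heart of the argument is the extension step. Suppose $f$ matches $(A,X_A)$ in $M$ with $(B,X_B)$ in $N$, where $B=f(A)$ and $X_B=f(X_A)$, and let $a\in I(M)\setminus A$ be a point we wish to add; enumerate $A=\{a_0,\dots,a_{n-1}\}$. Passing to the closed substructure generated by $A\cup\{a\}$ introduces the new fibers $\pi^{-1}(a,a_i)$; choose representatives $x_{aa_i}$. These choices determine a pattern $W\subseteq[n]^2$ by setting $(i,j)\in W$ iff $\models\Q(x_{a_ia_j},x_{aa_i},x_{aa_j})$. Applying axiom (4) inside $N$, with the points $f(a_i)$ and the fixed representatives $f(x_{a_ia_j})$ playing the role of the universally quantified data and with this same $W$, yields a fresh element $b\in I(N)\setminus B$ and representatives $y_i\in\pi^{-1}(f(a_i),b)$ realizing exactly $W$. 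Extending $f$ by $a\mapsto b$ and $x_{aa_i}\mapsto y_i$ produces a skeleton-map preserving $\Q$ on all chosen representatives (the triangles inside $A$ are preserved since $f$ was already an isomorphism, and the new triangles through $a$ are preserved by the choice of $W$), so by Claim~\ref{cl2} it lifts to an isomorphism of the generated closed substructures. The back direction is symmetric, and for skeletons of size $<2$ there are no fibers and nothing to check. As $I(M)$ and $I(N)$ are countable, alternating the two directions and taking the union of the chain gives the desired isomorphism $M\cong N$.

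I expect the only genuinely delicate point to be the bookkeeping forced by the double cover $\pi\colon G^*\to K$. Replacing a representative $x_{ab}$ by the other element of its fiber flips, via axiom (3), every $\Q$-relation in which it occurs, so the pattern $W$ is not canonical but depends on the section $X_A$. This is harmless here for two complementary reasons: axiom (4) is quantified over \emph{every} pattern $W$ and every choice of representatives, so whatever $W$ our section in $M$ produces can be reproduced verbatim in $N$; and Claim~\ref{cl2} guarantees that agreement on a single coherent section already forces agreement across the entire double cover. Hence the double cover does not obstruct the amalgamation and the extension step goes through uniformly, completing the back-and-forth.
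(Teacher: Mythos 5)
Your proposal is correct and follows essentially the same route as the paper: a back-and-forth between countable models whose extension step combines Claim~\ref{cl2} (reduction to a choice of fiber representatives) with Axiom (4) (realizing the induced pattern $W$ in the other model); the paper leaves this step as ``straightforward'' and you have simply supplied the details. No gaps.
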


\begin{proof}
Let $M$ and $N$ be countable models of $T$. Enumerate the sets $I(M)$ and $I(N)$ as $\{a_i\mid i<\omega\}$ and $\{b_i\mid i<\omega\}$ respectively. Define a chain of partial isomorphisms $f_i:M\to N$ such that
\begin{enumerate}
\item
$f_0(a_0)=b_0$;
\item
for each $i<\omega$, $f_i$ is an isomorphism between finite closed substructures of $M$ and $N$;
\item
for each $i<\omega$, the domain of $f_{2i}$ contains the set $\{a_0,\dots,a_i\}$ and the range of $f_{2i-1}$ contains the set $\{b_0,\dots,b_i\}$.
\end{enumerate}

It is straightforward to define the partial isomorphisms using Claim~\ref{cl2} and Axiom (4) above.
\end{proof}

Formally, the projection function $\pi$ is defined only on the elements of $G^*$. In what follows, it is convenient to treat $\pi$ as a total function on the universe of $\C$, defining $\pi(a)=a$ for all $a\in I(\C)$.

\begin{claim}\label{cl3}
Let $\C$ be the monster model of $T$. Then

(1) for any $C\subset \C$, the algebraic closure of $C$ is the closed substructure of $\C$ generated by $\pi(C)$;

(2) the algebraic closure of a finite set is finite;

(3) for any three algebraically closed sets $A$, $B$, and $C$ in $\C$, we have $\acl(ABC)=\acl(AB)\cup \acl(AC)\cup \acl(BC)$.
\end{claim}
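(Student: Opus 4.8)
The plan is to establish (1) first and then derive (2) and (3) as quick consequences. Since $\acl(C)=\bigcup\{\acl(C_0)\mid C_0\subseteq C\text{ finite}\}$, and the closed substructure $\pi^{-1}([A]^2)$ generated by $A:=\{a\in I\mid a\text{ occurs in some element of }C\}$ is likewise the directed union of the substructures generated by the finite sets $\pi(C_0)$, it suffices to prove (1) when $C$ is finite, so that $A$ is finite. For the inclusion $\pi^{-1}([A]^2)\subseteq\acl(C)$: every $a\in A$ and every pair in $[A]^2$ lies in $\dcl(C)$ (a point of $I$ occurring in $C$, or a coordinate of $\pi(c)$ for some $c\in C$, is definable from $c$), while each element of $G^*$ over a pair in $[A]^2$ lies in a $2$-element fiber above a parameter in $\dcl(C)$, hence is algebraic over $C$. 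This direction needs neither finiteness nor any axiom beyond the description of the sorts.

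The substance is the reverse inclusion $\acl(C)\subseteq\pi^{-1}([A]^2)$, which I would prove by showing that any element outside $\pi^{-1}([A]^2)$ has infinitely many conjugates over $C$. The $K$- and $G^*$-cases reduce to the $I$-case: an element of $K\setminus[A]^2$ is a pair one of whose coordinates lies in $I\setminus A$, and an element of $G^*$ above such a pair is inter-algebraic with it via its $2$-element fiber, so in each case moving the offending $I$-point moves the element through infinitely many values. So fix $e\in I\setminus A$; I must produce, for every finite $E\subseteq I\setminus A$, a point $e^*\in I\setminus(A\cup E)$ with $e^*\equiv_C e$. Fix base fibers $x_{ab}\in\pi^{-1}(\{a,b\})$ for $a,b\in A$ and fibers $\bar y_a\in\pi^{-1}(\{e,a\})$, and let $W$ record the pattern of the relations $\Q(\bar y_a,\bar y_b,x_{ab})$. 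Applying axiom (4) to the point set $A\cup E$, the base fibers $x_{ab}$, and a target pattern extending $W$ arbitrarily on $E$ yields a fresh point $e^*\notin A\cup E$ together with fibers $y_a^*\in\pi^{-1}(\{e^*,a\})$ realizing exactly the pattern $W$ over the $x_{ab}$; note that axiom (3) guarantees that matching $\Q$ for one choice of base fiber per pair matches it for the other. Hence the map fixing $\pi^{-1}([A]^2)$ pointwise and sending $e\mapsto e^*$, $\bar y_a\mapsto y_a^*$ preserves every instance of $\Q$ inside the closed substructure generated by $A\cup\{e\}$, since the only compatible triples meeting $e$ have the form $\{\{e,a\},\{e,b\},\{a,b\}\}$. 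By Claim~\ref{cl2} this extends to an isomorphism of closed substructures, which is partial elementary, and by the homogeneity of $\C$ ($T$ is $\omega$-categorical) it extends to an automorphism of $\C$ fixing $C$ and carrying $e$ to $e^*$. Iterating over larger $E$ produces infinitely many conjugates, so $e\notin\acl(C)$.

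I expect this matching-of-$\Q$-patterns step to be the main obstacle, as it is exactly where the genericity of axiom (4), together with the parity bookkeeping from axiom (3), is indispensable: the $\delta$-labels are not canonical, so not every fresh $e^*$ will do, and one genuinely must invoke axiom (4) to locate a good one rather than simply transposing $I$-points.

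Finally, (2) is immediate from (1): when $C$ is finite, $A$ is finite, so $[A]^2$ is finite and all its fibers have size $2$, whence $\pi^{-1}([A]^2)$ is finite. For (3), (1) gives for any algebraically closed set $X$ the equality $X=\pi^{-1}([X\cap I]^2)$, so $X$ is determined by its trace $X_I:=X\cap I$. Writing $A_I,B_I,C_I$ for the traces of $A,B,C$ and using that $\pi^{-1}$ commutes with unions, the asserted identity $\acl(ABC)=\acl(AB)\cup\acl(AC)\cup\acl(BC)$ reduces to the purely combinatorial statement $[A_I\cup B_I\cup C_I]^2=[A_I\cup B_I]^2\cup[A_I\cup C_I]^2\cup[B_I\cup C_I]^2$. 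This holds because any $2$-element subset of $A_I\cup B_I\cup C_I$ has each of its two points in one of the three traces, and is therefore contained in the union of at most two of them, i.e.\ in one of the three pairwise unions.
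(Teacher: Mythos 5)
Your proposal is correct and follows essentially the same route as the paper: reduce to finite $C$, observe the easy inclusion $\pi^{-1}([A]^2)\subseteq\acl(C)$, and show every element outside the closed substructure is non-algebraic, with (2) and (3) falling out as you describe. The paper merely asserts the non-algebraicity step and that the union in (3) is closed, whereas you supply the actual argument (producing infinitely many conjugates via axiom (4) and the parity bookkeeping of axiom (3), then extending by Claim~\ref{cl2} and homogeneity), which is exactly the intended justification.
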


\begin{proof}
(1) Since the union of a directed system of closed substructures is a closed substructure and $\acl(C)=\bigcup _{c\subset_{fin} C}\acl(c)$, it is enough to prove the statement for finite sets $C$.

Let $\bar C$ be the closed substructure generated by a finite set $C$. It is clear that $\bar C\subset \acl(C)$. To show the reverse inclusion, we note that any element of $\C$ not contained in $\bar C$ is not algebraic over $C$.

The statement (2) is immediate from (1).

(3) It is easy to check that the union of closed structures $\acl(AB)\cup \acl(AC)\cup \acl(BC)$ is itself closed.
\end{proof}

\begin{proposition}\label{simple}
The theory $T$ is supersimple of $SU$-rank 1.
\end{proposition}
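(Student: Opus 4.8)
The plan is to exhibit an explicit ternary relation, verify the hypotheses of the Kim--Pillay characterization of simple theories \cite{KP}, and then read off both simplicity and the rank bound from the description of algebraic closure in Claim~\ref{cl3}. For subsets $A$, $B$, $C$ of $\C$ I would define
$$A \nonfork_{C} B \quad \Longleftrightarrow \quad \acl(AC)\cap \acl(BC)=\acl(C).$$
By Claim~\ref{cl3}(1) this says exactly that the families of $I$-points occurring in $\acl(AC)$ and in $\acl(BC)$ meet only inside $\acl(C)$, i.e.\ that $A$ and $B$ use disjoint sets of new points of $I$ over the base. Invariance and symmetry are immediate, and monotonicity together with transitivity follow from the distributive identity $\acl(ABC)=\acl(AB)\cup\acl(AC)\cup\acl(BC)$ of Claim~\ref{cl3}(3), which is precisely the statement that this ``disjointness of supports'' relation is well behaved.

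First I would dispatch the routine axioms. By Claim~\ref{cl2}, finite character and extension reduce to the level of $I$-points and fibre choices: extension amounts to producing a copy of the configuration of $A$ on fresh points of $I$ realizing the same $\Q$-pattern over $C$, which is exactly what Axiom (4) supplies. Since $\acl$ of a finite set is finite (Claim~\ref{cl3}(2)), the set $B_0=\acl(A)\cap\acl(B)$ is finite and satisfies $A \nonfork_{B_0} B$; this gives local character in its strong, finite form, and it is this finiteness that upgrades simplicity to supersimplicity once the remaining axiom is verified.

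The main obstacle is the independence theorem over a model $M$ (equivalently, over an algebraically closed base): given $a \nonfork_{M} b$ and $c_1\equiv_M c_2$ with $c_1\nonfork_{M} a$ and $c_2\nonfork_{M} b$, one must build $c$ with $c\equiv_{Ma}c_1$, $c\equiv_{Mb}c_2$, and $c\nonfork_{M} ab$. By Claim~\ref{cl2} the type of such a $c$ over $Mab$ is determined by the $\Q$-values on the triangles using a new point of $c$ together with points of $Mab$. The prescriptions from $\tp(c_1/Ma)$ and from $\tp(c_2/Mb)$ overlap only on triangles lying inside $M$, where they agree because $c_1\equiv_M c_2$; triangles with one vertex from $a\setminus M$ and one from $b\setminus M$ are constrained by neither side and may be assigned freely. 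Hence there is no incompatibility, and the new points of $c$ can be introduced one at a time by Axiom (4), which realizes an arbitrary prescribed $\Q$-pattern relative to any finite set of existing points. The delicate point is to check that the double-cover parity of Axiom (3) does not prevent matching the prescribed fibre elements $y_i$ and the prescribed $\Q$-values at the same time; here one uses that flipping a fibre flips the corresponding $\Q$-value, so that the full range of patterns offered by Axiom (4) is exactly what is required to reconcile the two sides. Granting this, Kim--Pillay gives that $T$ is simple and that $\nonfork$ is nonforking independence.

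Finally I would compute the rank. For a single point $a\in I$ with $a\notin\acl(C)$ and any $D\supseteq C$ with $a\notin\acl(CD)$, one has $\acl(Ca)\cap\acl(CD)=\acl(C)$, so $\tp(a/CD)$ does not fork over $C$; thus $SU(a/C)\le 1$, while non-algebraicity gives $SU(a/C)\ge 1$, and so the generic type of $I$ has $SU$-rank $1$. Since by Claim~\ref{cl3} every element lies in the finite algebraic closure of a tuple from $I$ and forking independence is disjointness of these $I$-supports, the ranks are finite and carried by the sort $I$; combined with the finite local character above this shows that $T$ is supersimple of $SU$-rank $1$.
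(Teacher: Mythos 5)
Your proposal is correct and follows essentially the same route as the paper: define $A\nonfork_C B$ by $\acl(AC)\cap\acl(BC)\subseteq\acl(C)$, verify the Kim--Pillay axioms (with finite local character coming from Claim~\ref{cl3}(2) and extension plus the Independence Theorem coming from Axiom~(4) and compactness), and then get $SU$-rank $1$ from the fact that any forking $1$-type is algebraic. The paper's proof is just a terser version of yours; your added detail on reconciling the $\Q$-patterns in the Independence Theorem is a fair expansion of what the paper dismisses as routine.
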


\begin{proof}
We define the notion of independence in a monster model of $T$, and show that this notion satisfies the axioms of independence (Definition~4.1 of~\cite{KP}) as well as the Independence Theorem. By Theorem~4.2 of~\cite{KP}, this will show that $T$ is simple and that the notion of independence must coincide with non-forking. The latter part will allow to show that the $SU$-rank is equal to 1.


For $A,B,C\subset \C$, define $A\nonfork_C B$ if and only if $\acl(AC)\cap \acl(BC)\subset \acl(C)$.

It is immediate that the relation $\nonfork_{}$ satisfies the invariance, local character, finite character, symmetry and transitivity. Note that the local character is finite: for any finite tuple $\bar a$ and a set $B$, the set $\acl(\bar a)\cap \acl(B)$ is finite by Claim\ref{cl3}.

The extension property and Independence Theorem follow from compactness and Axiom (4). The above shows that $T$ is supersimple. It remains to note that every 1-type that forks is algebraic over its domain. So $T$ has $SU$-rank 1.
\end{proof}

\begin{proposition}
The theory $T$ has weak elimination of imaginaries.
\end{proposition}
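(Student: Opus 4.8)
The plan is to verify the standard criterion that $T$ has weak elimination of imaginaries precisely when every imaginary is definable over the real elements algebraic over it: writing $A_e := \acl^{eq}(e)\cap\C$, I must show $e\in\dcl^{eq}(A_e)$ for each $e\in\C^{eq}$. Since $T$ is $\omega$-categorical I may represent $e$ as a class $\bar a/E$ with $\bar a$ a finite real tuple and $E$ a $\emptyset$-definable equivalence relation. By Claim~\ref{cl3}(1) every real algebraically closed set is a closed substructure, so $A_e=\cl(A_I)$ for some finite $A_I\subseteq I$, and I may take $\bar a$ to enumerate $\cl(F)$ for a finite $F\supseteq A_I$. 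Unwinding the automorphism description of $\dcl^{eq}$, the goal becomes the assertion that $\tp(\bar a/A_e)$ already determines the $E$-class of $\bar a$; equivalently, any $\bar a'$ with $\bar a'\equiv_{A_e}\bar a$ satisfies $\bar a'\mathrel{E}\bar a$.

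To prove this I would work over the real algebraically closed base $A_e$ (which we may treat as boundedly closed, since the theory is supersimple and eliminates hyperimaginaries) and use that $T$ is simple with the concrete independence relation $X\ind_Z Y\iff\acl(XZ)\cap\acl(YZ)\subseteq\acl(Z)$ established in Proposition~\ref{simple}. Given $\bar a'\equiv_{A_e}\bar a$, I would choose a further realization $\bar a''$ of $\tp(\bar a/A_e)$ independent from $\bar a\bar a'$ over $A_e$; by transitivity of $E$ it then suffices to show that any two realizations of $\tp(\bar a/A_e)$ that are independent over $A_e$ are $E$-equivalent. At this point the independence theorem over $A_e$ produces an amalgam, while Claim~\ref{cl3}(3) (the union formula $\acl(ABC)=\acl(AB)\cup\acl(AC)\cup\acl(BC)$) and the coordinatization of Claim~\ref{cl2} let me compute all the algebraic closures and transition maps involved explicitly, localizing the question to the behaviour of the double cover $G^*$ over the new points of $F\setminus A_I$.

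The step I expect to be the main obstacle is the analysis of this double cover. The residual freedom in an automorphism fixing $A_e$ is exactly a system of fiber-swaps over the pairs meeting $F\setminus A_I$, and axioms (2)--(3) --- under which $\Q$ changes sign when a single argument is replaced by its fiber-mate --- force such swaps to behave like a $\mathbb{Z}/2$-valued cochain constrained by a parity (coboundary) condition on triangles. What must be checked is that this torsor of fiber-swaps contributes only \emph{algebraic}, two-element data to $\acl^{eq}(e)$, so that it is already absorbed into $A_e$; granting this, any automorphism fixing $A_e$ pointwise can be corrected by an admissible fiber-swap into one fixing $e$, which yields $\bar a'\mathrel{E}\bar a$ and hence $e\in\dcl^{eq}(A_e)$.

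Once this cover computation is carried out, weak elimination of imaginaries follows. I note that an essentially equivalent organization would be to establish directly the intersection property $\dcl^{eq}(B)\cap\dcl^{eq}(C)=\dcl^{eq}(B\cap C)$ for real algebraically closed $B$ and $C$; this reduces to the same structural input, namely Claims~\ref{cl2} and~\ref{cl3} together with the $\mathbb{Z}/2$-cover bookkeeping forced by the defining axioms of $\Q$.
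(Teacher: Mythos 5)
Your overall strategy is genuinely different from the paper's. The paper does not analyze imaginaries directly at all: it invokes Proposition~2.2 of \cite{Ik07}, which reduces weak elimination of imaginaries to the purely independence-theoretic condition that $a\nonfork_A B$ and $a\nonfork_B A$ imply $a\nonfork_{\acl(A)\cap\acl(B)} AB$, and then verifies this condition by a short computation with the projection $\pi$ (the key point being Claim~\ref{cl4}, $\acl(a(\bar A\cap\bar B))=\acl(aA)\cap\acl(aB)$, which holds because $\pi$ is a function and $\pi(A)=\pi(\bar A)$). All of the work happens in the real sorts, and the double cover $G^*$ never has to be examined. You instead attack the criterion $e\in\dcl^{eq}(\acl^{eq}(e)\cap\C)$ head on, which forces you to understand arbitrary $\emptyset$-definable equivalence relations on enumerations of $\cl(F)$, and in particular the parity structure of $G^*$.

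The difficulty is that the step you yourself flag as the main obstacle is exactly where the entire content of the proposition lives, and it is not carried out; moreover, the way you propose to close it is circular as stated. You say an automorphism $\sigma$ fixing $A_e$ pointwise ``can be corrected by an admissible fiber-swap into one fixing $e$''; but if $\tau$ is the correcting fiber-swap and $\tau\circ\sigma$ fixes $e$, you may conclude that $\sigma$ fixes $e$ only if you already know that $\tau$ does --- and $\tau$ fixing $A_e$ pointwise does not give this, since ``every automorphism fixing $A_e$ pointwise fixes $e$'' is precisely the statement being proved. What is actually needed is an argument that the $\mathbb{Z}/2$-valued coset data retained by the class $\bar a/E$, beyond the isomorphism type of $\cl(F)$ over $A_e$, is invariant under every such fiber-swap (equivalently, that the finitely many $E$-classes in the orbit of $\bar a$ under $\Aut(\C/F\cup A_e)$ collapse to a single class); this requires an honest analysis of $E$ against the triangle-parity constraints of axioms (2)--(3), and nothing in the proposal supplies it. Your reduction to independent realizations via transitivity of $E$ is sound, and the use of Claims~\ref{cl2} and~\ref{cl3} to compute the real algebraic closures is correct, but the double-cover bookkeeping is the theorem here, not a remark. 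If you wish to avoid that computation entirely, the paper's route through \cite{Ik07} and the explicit independence relation of Proposition~\ref{simple} is the efficient one.
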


\begin{proof}
By Proposition~2.2 of~\cite{Ik07}, it suffices to show the following: for all finite tuples $a$ and sets $A$, $B$, if $a\nonfork_A B$ and $a\nonfork _B A$, then
$$
a\nonfork _{\acl(A)\cap \acl(B)} AB.
$$

To make the notation less cumbersome, we use the symbols $\bar A$ and $\bar B$ for $\acl(A)$ and $\acl(B)$, respectively. Using the description of non-forking from Proposition~\ref{simple}, we need to show that if
$$
\acl(aA)\cap \acl(AB)\subset \bar A \textrm{ and }
\acl(aB)\cap \acl(AB)\subset \bar B,
$$
then $\acl(a (\bar A\cap \bar B))\cap \acl(AB)\subset \bar A\cap \bar B$.

\begin{claim}\label{cl4}
For all sets $A$ and $B$, for any finite tuple $a$, we have
$\acl(a(\bar A \cap \bar B)=\acl(aA)\cap \acl(aB)$.
\end{claim}

\begin{proof}
Note that $\acl(X\cap Y)$ can be a proper subset of $\acl(X)\cap \acl(Y)$ for $X,Y\subset \C$ if $X$ and $Y$ are not algebraically closed, so some care is needed to carry out the argument.

It is enough to check that $\pi(a(\bar A \cap \bar B))=\pi(aA)\cap \pi(aB)$. We have
\begin{multline*}
\pi(a(\bar A \cap \bar B))=\pi(a)\cup (\pi(\bar A) \cap \pi(\bar B))
\\
=(\pi(a)\cup \pi(A)) \cap (\pi(a)\cup \pi(B))=\pi(aA)\cap \pi(aB).
\end{multline*}
The first and last equalities hold simply because $\pi$ is a function; and the middle equality follows from $\pi(A)=\pi(\bar A)$.
\end{proof}

With Claim~\ref{cl4}, it suffices to prove
$\acl(aA) \cap \acl(aB) \cap \acl(AB) \subset \bar A\cap \bar B$. This obviously follows now from the assumptions.
\end{proof}

\begin{proposition}
The theory $T$ has 4-amalgamation.
\end{proposition}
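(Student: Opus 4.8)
The plan is to build an independent solution directly inside $\C$, exploiting the fact that the predicate $\Q$ is supported on triangles and hence cannot constrain all four vertex classes at once. Let $\A:\Pm(4)\to\cC$ be a closed independent $4$-amalgamation problem. Since each $\A(s)$ is (isomorphic to) an algebraically closed subset of $\C$ and, by Claim~\ref{cl3}(1), such a set is the closed substructure generated by its set of $I$-points, I first record a structural reduction. By Claim~\ref{cl3}(3), for algebraically closed $A,B,C$ we have $\acl(ABC)=\acl(AB)\cup\acl(AC)\cup\acl(BC)$; iterating this identity shows that each face $\A(\{i,j,k\})$ is the union of its three edges, and that the only object still to be produced, the set $\A(4)$, will likewise be the union $\bigcup_{i<j}\acl(a_ia_j)$ of the six edges. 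Thus the whole problem reduces to placing the six edges — equivalently the four vertex classes together with the fibers over all pairs — coherently in $\C$, after which $\Q$ is the only remaining datum.

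The key observation is that this remaining datum is already dictated, and consistently so, by the four faces. By axiom (1), $\Q(x,y,z)$ can hold only when $\{\pi(x),\pi(y),\pi(z)\}$ is a compatible triple, i.e. the three edges of a single triangle on three $I$-points. Since a triangle has only three vertices, its vertices lie in at most three of the four classes $\pi(\A(\{i\}))$, so every such triangle is contained in one of the faces $\A(\{i,j,k\})$. Consequently the $\Q$-pattern that $\A(4)$ must carry is exactly the union of the $\Q$-patterns on the four faces; these agree on every shared edge because $\A$ is a functor, and there is no compatible triple — hence no potential $\Q$-constraint — meeting all four classes. This is precisely the feature that separates $T$ from the random tetrahedron-free hypergraph, whose defining constraint does involve four points.

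To realize this configuration, I first fix an elementary embedding of the face $\A(\{0,1,2\})$ into $\C$ and identify $\A(\{0,1,2\})$ with its image, obtaining concrete vertices and fibers realizing the correct $\Q$-data on the triangles inside $\{0,1,2\}$. It remains to adjoin the points of the fourth class $\pi(\A(\{3\}))$ with the prescribed edges to the first three classes and the prescribed $\Q$-values on the triangles meeting class $3$. I add these points one at a time using axiom (4): for a new point $b$ and any subset $W$ of the pairs among the already-placed points, axiom (4) produces fibers $y_i$ over $(a_i,b)$ realizing exactly the $\Q$-pattern $W$ relative to previously chosen fibers, and by axiom (3) the values on the remaining fiber choices are then forced by parity. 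Since distinct triangles are handled by independent choices of $W$, and since a triangle meeting class $3$ has its other two vertices in at most two of the remaining classes and so lies in a single face $\A(\{i,j,3\})$, the patterns demanded by the three faces $\A(\{0,1,3\})$, $\A(\{0,2,3\})$, $\A(\{1,2,3\})$ can all be met simultaneously. Iterating over the points of class $3$ yields an algebraically closed set $\A(4)=\acl(\A(\{0\})\cup\cdots\cup\A(\{3\}))$ into which all four faces embed compatibly; independence of the solution is immediate, since the four vertex classes are pairwise disjoint off the base and each new point is adjoined generically.

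I expect the main obstacle to be the bookkeeping in the last step: verifying that adjoining the points of class $3$ one at a time via axiom (4) reproduces the abstract faces $\A(\{i,j,3\})$ on the nose — including the fibers over pairs internal to a single class and the triangles they span — and confirming that the parity condition of axiom (3) never propagates a local $\Q$-choice into a genuine four-class constraint. Once this is checked, $\omega$-categoricity (equivalently, saturation of $\C$) guarantees that the finite pieces assemble into the required subset of $\C$, completing the construction of an independent solution and establishing $4$-amalgamation.
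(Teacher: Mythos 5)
Your proposal is correct and follows essentially the same route as the paper's proof: reduce by compactness (and the fact that algebraically closed sets are generated by their $I$-points, so faces and the solution decompose into edges) to the problem of adjoining the fourth vertex one $I$-point at a time with a prescribed $\Q$-pattern, which is exactly what axiom (4) provides. The paper's version is much terser, but the key steps --- the observation that $\Q$ lives only on triangles and hence imposes no genuine four-class constraint, and the appeal to axiom (4) for the generic extension --- are the same.
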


\begin{proof}

Given three algebraically closed subsets $A_0$, $A_1$, and $A_2$, consider three algebraically closed structures $acl(A_iA_jB_{ij})$, $0\le i<j\le 2$, that are compatible ``along the edges''. That is, $B_{ij}\equiv_{A_i} B_{ik}$ for all $i$, $j$, and $k$ (in particular, $B_{ij}$ realize the same type over the empty set and we assume that $B_{ij}$ are algebraically closed). We need to show that there is a structure $B$ such that $\acl(A_iA_jB)\equiv _{A_iA_j} \acl(A_iA_jB_{ij})$ for all $i$, $j$.

By compactness, it is enough to consider finite sets $A_i$ and $B_{ij}$.
Moreover, it is enough to establish amalgamation for the case when $B_{ij}=\{b_{ij}\}$, for some $b_{ij}\in I$. The latter is immediate from axiom (4) of $T$.
\end{proof}

\begin{remark}
It is obvious that $T$ fails the property $B(3)$. Indeed, for any three points $a,b,c\in I$, the set $\acl(ac)$ is definable from $\acl(ab)\cup \acl(bc)$, yet the points in the fiber $\pi^{-1}(ab)$ are algebraic over $ab$.

Thus, the example shows that, in a simple unstable theory, the failure of property $B(3)$ need not imply the failure of 4-amalgamation.
\end{remark}

Note that in the theory $T$ described above, the sort $I$ is \emph{not} an indiscernible set.  In fact, a $4$-element subset of $I$ can have two distinct types.


\begin{remark}
The example described above can be generalized to higher dimensions.  That is, for every $n \geq 3$, there is an analogous simple theory $T_n$ such that $B(n)$ fails, but $(n+1)$-existence holds.
\end{remark}

\subsection{\boldmath $B(3)$ may fail over models}

The example shows that, in a simple unstable theory, the property $B(3)$ does not have to hold over models. Most of the calculations are very similar to (and easier than) those for the first example, so we just state the results.

The basic structure is the same as in the previous example.
Each model contains a sort $I$, the sort $K$, where $K$ is the set of 2-element subsets of $I$, and a double cover $G^*$ of $K$. We have the membership function, and think of the elements of $G^*$ as of triples $\langle \{a,b\},\delta\rangle$, where $a,b\in I$ and $\delta=0$ or $\delta=1$. There is a projection $\pi$ from $G^*$ to $K$.

We have two ternary predicates: $R$, defined on 3-element subsets of $I$ and $Q$, such that:
\begin{enumerate}
\item
$R$ is a generic predicate on $I$;
\item
$\Q(x,y,z)$  implies that $x,y,z\in G^*$, that  $\{\pi(x),\pi(y),\pi(z)\}$ are \emph{compatible}, and that $R$ holds on the corresponding 3-element set $\pi(x)\cup \pi(y)\cup \pi(z)\}$;
\item
$\Q$ is symmetric with respect to all the permutations of its arguments;
\item
if $\pi(x)$, $\pi(y)$, and $\pi(z)$ form a compatible triple, $R$ holds on $\pi(x)\cup \pi(y)\cup \pi(z)\}$, and if
$x'$ is the other element in the fiber above $\pi(x)$, then
$$
\Q( x,y,z)\iff \lnot \Q( x',y,z).
$$
\end{enumerate}

Let $T$ be the theory describing the basic structure together with the axioms (1)--(4) above.

\begin{claim}\label{cl1-2}
The theory $T$ is consistent.
\end{claim}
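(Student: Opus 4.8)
The plan is to build a model by the same recipe as in the proof of Claim~\ref{cl1}, the only differences being that the random ternary graph $R$ is now \emph{retained} in the language rather than discarded, and that $Q$ is switched on only over triples where $R$ holds. Concretely, I would start with a countably infinite sort $I$ carrying a generic (random) ternary predicate $R$, and then erect the rest of the basic structure on top: set $K=[I]^2$, let $G^*$ be the double cover of $K$ whose elements are the pairs $\langle\{a,b\},\delta\rangle$ with $\delta\in\{0,1\}$, and let $\pi$ be the projection; write $\delta_x$ for the second coordinate of $x\in G^*$. For a compatible triple $x,y,z\in G^*$ I would then set
$$
Q(x,y,z)\iff R\text{ holds on }\pi(x)\cup\pi(y)\cup\pi(z)\ \text{ and }\ \delta_x+\delta_y+\delta_z\equiv 1\pmod 2,
$$
and let $Q$ fail on every non-compatible triple.

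The verification of axioms (2)--(4) is then a routine parity computation, identical in spirit to the one in Claim~\ref{cl1}. Axiom (2) is immediate: by construction $Q$ can hold only on compatible triples of elements of $G^*$ whose base triple satisfies $R$. Axiom (3) holds because compatibility, the relation $R$, and the parity $\delta_x+\delta_y+\delta_z$ are all symmetric in $x,y,z$. For axiom (4), if $\pi(x),\pi(y),\pi(z)$ is a compatible triple on which $R$ holds and $x'$ is the other element of the fiber $\pi^{-1}(\pi(x))$, then $\delta_{x'}=1-\delta_x$, so $\delta_{x'}+\delta_y+\delta_z$ and $\delta_x+\delta_y+\delta_z$ have opposite parities; since $R$ holds in both cases, $Q(x,y,z)\iff\lnot Q(x',y,z)$, as required.

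The only point needing genuine care is axiom (1), namely that $R$ really is generic in the full structure $(I,K,G^*,\pi,R,Q)$ and not merely in the reduct $(I,R)$; this is the main, though minor, obstacle. Here I would argue that no genericity is lost in the expansion. Every element of $K$ and of $G^*$ is definable from elements of $I$ (indeed $K$ is interdefinable with $I$, and each fiber of $\pi$ is a two-element $\emptyset$-definable set), while $Q$ is by definition quantifier-free definable from $R$ together with the cover coordinates. Consequently the expansion adds no new relation on $I$ beyond what $R$ already provides, so the existential-closure requirements defining genericity of $R$ over finite subsets of $I$ are exactly the ones holding in the pure structure $(I,R)$, and are met by our choice of $R$. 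Since the basic structure is totally categorical, hence $\omega$-stable, a model carrying such a generic $R$ exists --- equivalently one may take $(I,R)$ to be a model of the model companion of the base theory together with a symmetric irreflexive ternary relation --- so all of axioms (1)--(4) are satisfied and $T$ is consistent.
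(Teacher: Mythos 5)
Your construction is the same as the paper's: start from the basic two-sorted cover structure, impose a random ternary graph $R$ on $I$, and define $Q$ on compatible triples whose base satisfies $R$ by a fixed parity condition on the fiber coordinates (the paper uses parity $0$ where you use parity $1$, which is immaterial). The verification of axioms (2)--(4) and the remark that the expansion does not disturb the genericity of $R$ are exactly the routine checks the paper leaves implicit, so the proposal is correct.
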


\begin{proof}
We build a structure that satisfies $T$. We begin with an (infinite) structure $M$ that satisfies the basic axioms and define a random ternary graph on $I(M)$.

For each three-element subset $\{a_0,a_1,a_2\}$ of $I$, if $M\models R(a_0,a_1,a_2)$, then, for elements $x_{ij}\in \pi^{-1}(a_ia_j)$, $0\le i<j\le 2$, define $Q(x_{01},x_{02},x_{12})$ to hold if and only if $\delta_{01}+\delta_{02}+\delta_{12}=0$ modulo 2. Define $Q$ to fail on the elements in other fibers. It is clear that $M$ is a needed model.
\end{proof}

A standard back-and-forth construction gives the following.

\begin{claim}\label{cl2-2}
The theory $T$ is $\omega$-categorical.
\end{claim}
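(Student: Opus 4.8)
The plan is to establish $\omega$-categoricity by a back-and-forth argument between two arbitrary countable models $M$ and $N$ of $T$, following the template used for the first example. Two preparatory facts are needed, both direct analogues of results proved for the first example. The first is the analogue of Claim~\ref{cl3}: for finite $C$ the algebraic closure $\acl(C)$ is finite and equals the closed substructure $\pi^{-1}([\pi(C)]^2)$ generated by $\pi(C)$, so that the back-and-forth can be carried out between finite closed substructures. The second, which plays the role of Claim~\ref{cl2}, is the key lemma: \emph{the isomorphism type of the finite closed substructure generated by a finite set $A \subseteq I$ is determined by the isomorphism type of $A$ as an $R$-structure}. Indeed, by axioms (2)--(4), once $R$ is fixed on the triples from $A$ the predicate $Q$ can hold only over an $R$-triple, and on such a triple it is governed by the parities of the chosen fiber representatives; thus a choice of representatives determines $Q$ on the whole closed substructure, and different choices differ only by the ``gauge'' freedom of relabelling the fibers.

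Granting the key lemma, I would enumerate $I(M) = \{a_i \mid i < \omega\}$ and $I(N) = \{b_i \mid i < \omega\}$ and build an increasing chain of isomorphisms $f_i$ between finite closed substructures, arranging that every $a_i$ eventually enters a domain and every $b_i$ a range. The genericity of $R$ (axiom (1)) replaces the strong extension axiom of the first example: to bring a new point $a \in I(M)$ into the domain over a finite $A_0 \subseteq I(M)$ with image $B_0 = f_i(A_0)$, I would use the back-and-forth property of the random ternary relation to choose $b \in I(N)$ realizing over $B_0$ the same $R$-type that $a$ realizes over $A_0$. Having equalized the $R$-structure on $A_0 \cup \{a\}$ and on $B_0 \cup \{b\}$, the key lemma furnishes an isomorphism of the corresponding closed substructures extending $f_i$, completing the step.

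The main obstacle is the proof of the key lemma itself, and specifically the coherent matching of the fibers over the new point. The constraints come from triples $\{a, a', a''\}$ on which $R$ holds: each involves the two new fibers $\pi^{-1}(\{a,a'\})$ and $\pi^{-1}(\{a,a''\})$ together with the already-matched fiber $\pi^{-1}(\{a',a''\})$, and by axioms (3)--(4) the truth value of $Q$ on it is an affine function over $\mathbb{F}_2$ of the two binary choices of how the new fibers are matched. One must therefore verify that these affine constraints, as $a'$ and $a''$ range over all $R$-triangles through $a$, admit a simultaneous solution; this is the delicate point, since a priori it is a consistency condition around cycles in the graph of $R$-neighbours of $a$ over $A_0$. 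I expect this to be handled exactly as in the construction of the model in Claim~\ref{cl1-2}, where $Q$ was realized as a parity (a coboundary) of the fiber labels: after matching $R$-types one checks that the $Q$-pattern on the cone of triples through $a$ is of this form in both $M$ and $N$, so that the fiber labels can be chosen to agree and the extension is elementary. The union $f = \bigcup_i f_i$ is then the desired isomorphism, establishing $\omega$-categoricity.
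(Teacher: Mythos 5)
Your overall strategy is exactly the one the paper intends: the paper's entire proof is the sentence ``A standard back-and-forth construction gives the following,'' with the template being the $\omega$-categoricity proof of the first example, and your reduction to (i) finiteness of algebraic closures, (ii) a determinacy lemma for closed substructures, and (iii) genericity of $R$ for the extension step matches that template. You have also correctly isolated the only delicate point, namely the consistency of the affine constraints over $\mathbb{F}_2$ when matching the new fibers. The problem is that your resolution of that point --- ``one checks that the $Q$-pattern on the cone of triples through $a$ is of this [coboundary] form in both $M$ and $N$'' --- is precisely the assertion that needs proof, and it does not follow from axioms (1)--(4) as they are written. Those axioms only say that on each $R$-triple the $Q$-pattern is of the form $\delta_1+\delta_2+\delta_3=\epsilon \pmod 2$ for some $\epsilon\in\{0,1\}$ depending on the triple and the chosen fiber labels; relabelling fibers changes $\epsilon$ by a coboundary, but nothing in (1)--(4) constrains the resulting cohomology class. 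One can build a structure satisfying (1)--(4) in which some $4$-element set carrying all four $R$-triples has $\sum\epsilon = 1$ (an odd invariant), exactly as in the first example, where the paper explicitly notes that a $4$-element subset of $I$ can have two distinct types. In such a model your key lemma is false: the closed substructure generated by $A$ is \emph{not} determined by the $R$-structure on $A$, and the system of affine constraints around a cycle in the link of $a$ need not be solvable.

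To close the gap you must do one of two things. Either read $T$ as the complete theory of the model constructed in Claim~\ref{cl1-2} and observe that, for each $n$, the statement ``for every $n$-element subset of $I$ there is a choice of fiber representatives making $Q$ hold on every $R$-triple of representatives with even parity'' is first order and true in that model (because $Q$ was defined there as a coboundary of the canonical labels); this scheme then holds in every model of $T$, your key lemma becomes true, and the back-and-forth you describe goes through verbatim. Or else run the back-and-forth on isomorphism types of closed substructures rather than $R$-types, in which case you need an extension axiom for $Q$-patterns analogous to axiom (4) of the first example --- which the second example's axiom list does not provide. Without one of these repairs the theory axiomatized literally by (1)--(4) is not even complete, so the argument as proposed would fail at the step where you assert that the two cones have the same coboundary form.
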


\begin{claim}\label{cl3-2}
The theory $T$ is supersimple of $SU$-rank 1.
\end{claim}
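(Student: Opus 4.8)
The final statement to prove is Claim~\ref{cl3-2}: the second example theory $T$ is supersimple of $SU$-rank $1$. My plan is to follow the exact same template used for the first example in Proposition~\ref{simple}, since the authors explicitly remark that the calculations are analogous and easier. The basic structure (the sorts $I$, $K=[I]^2$, and the double cover $G^*$) is identical to the first example, and the only genuinely new feature is that we now carry along a second predicate $R$, the generic ternary graph on $I$, which is never ``forgotten'' by a reduct. So the algebraic closure analysis transfers essentially verbatim.

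First I would establish the analogue of Claim~\ref{cl3}, namely that for any $C \subseteq \C$ the algebraic closure of $C$ is again the closed substructure generated by $\pi(C)$, that finite sets have finite algebraic closure, and that for algebraically closed $A$, $B$, $C$ we have $\acl(ABC) = \acl(AB) \cup \acl(AC) \cup \acl(BC)$. The proof is the same: any element outside the closed substructure generated by $\pi(C)$ has an infinite orbit (the other point in its fiber can be swapped, and points of $I$ outside $\pi(C)$ are interchangeable by $\omega$-categoricity and Axiom~(4)), hence is non-algebraic, while the union of closed substructures is closed. The presence of $R$ does not affect this, since $R$ is fully determined on $I$ and $\pi$ carries no information about $R$-values beyond the underlying triples.

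With the algebraic closure description in hand, I would then define, exactly as before, $A \nonfork_C B$ if and only if $\acl(AC) \cap \acl(BC) \subseteq \acl(C)$, and verify that this relation satisfies the axioms of independence from~\cite{KP} together with the Independence Theorem. Invariance, local and finite character, symmetry, and transitivity are formal consequences of the $\acl$-description, and finite local character follows from finiteness of $\acl$ of finite sets. The extension property and the Independence Theorem reduce, via compactness, to the amalgamation expressed by the axioms: one must check that given compatible $Q$-data (and now also compatible $R$-data) over independent sets, the required realization exists. By Theorem~4.2 of~\cite{KP} this shows $T$ is simple and that $\nonfork$ coincides with nonforking; since every forking $1$-type is algebraic over its domain, the $SU$-rank is $1$.

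The one step requiring genuine care — and the main obstacle — is verifying the Independence Theorem, specifically that the freedom to amalgamate the $Q$-values is not obstructed by the extra constraint that $Q$ can only hold on fibers over $R$-positive triples (Axiom~(2)). In the first example $Q$ was unconstrained by any graph on $I$, so amalgamating $Q$-data over a generic configuration was immediate from Axiom~(4). Here I must confirm that when we amalgamate two independent extensions, the generic $R$ on $I$ can itself be freely amalgamated (genericity of $R$ gives this) and that, compatibly with the chosen $R$-values, the constrained $Q$-values can still be assigned consistently on the triples where $R$ holds. Because $R$ is a generic predicate and the new points of $I$ supplied by amalgamation can be taken to realize any prescribed $R$-pattern, the $Q$-assignment on the $R$-positive triples remains a free parity choice on each fiber just as in the first example; this is why the calculation is ``easier,'' but it is the spot where the interaction between $R$ and $Q$ must be checked rather than merely asserted.
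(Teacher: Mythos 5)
Your proposal follows exactly the route the paper takes: the paper's entire proof of this claim is the single remark that the argument is the same as for Proposition~\ref{simple} with the identical independence relation $A \nonfork_C B \iff \acl(AC)\cap\acl(BC)\subseteq\acl(C)$, verified via Theorem~4.2 of \cite{KP}, and your expansion (the $\acl$ description, the axiom checks, and the observation that the only new work is amalgamating the $Q$-parities compatibly with the generic $R$) is a faithful and correctly flagged elaboration of that same argument.
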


The argument for Claim~\ref{cl3-2} is similar to that of Proposition~\ref{simple} (the independence relation is exactly the same).

\begin{claim}\label{cl4-2}
The theory $T$ fails $B(3)$, even over models and it fails the 4-existence property over models.
\end{claim}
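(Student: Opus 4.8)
The plan is to exhibit an explicit failure of $B(3)$ over a model, mirroring the structure of the Remark after the $4$-amalgamation proof in the first example, and then to leverage that same configuration to defeat $4$-existence over a model. First I would fix a countable model $M\models T$ and choose three elements $a,b,c\in I$ that are mutually independent over $M$ in the sense of the forking relation described in Claim~\ref{cl3-2} (so that $\acl(Ma)\cap\acl(Mb)=\acl(M)$, and likewise for the other pairs). The key observation is that, exactly as in the first example, the fiber $\pi^{-1}(\{a,b\})$ consists of two elements, each algebraic over $\{a,b\}$ (hence over $Mab$), but \emph{not} in $\dcl(Ma,Mb)$, since the double cover genuinely requires both $a$ and $b$ to pin down a single element of $G^*$. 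The new feature over the previous example is the predicate $R$: the relation $Q$ is nontrivial on a compatible triple over $\{a,b,c\}$ \emph{only when} $R$ holds on $\{a,b,c\}$. So to witness the failure of $B(3)$ I would additionally arrange, using the genericity of $R$ (axiom (1)) and the independence-theorem/extension machinery of Claim~\ref{cl3-2}, that $R(a,b,c)$ holds even though $a,b,c$ are independent over $M$.

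The core step is then to show that $Q$ lets $\acl(Mab)$ control the relative ``twist'' of the fibers over $\{a,c\}$ and $\{b,c\}$, so that a choice of automorphism can be made on one edge consistently with the identity on the vertices and the other edges while failing to be elementary on the whole triangle. Concretely, I would check that the element $x_{ab}\in\pi^{-1}(\{a,b\})$ lies in $\dcl(\ov{Mac},\ov{Mbc})$: given $R(a,b,c)$ and axiom (4) in the second example, the value of $Q(x_{ab},y_{ac},y_{bc})$ together with the chosen fiber elements over the other two edges determines which element of $\pi^{-1}(\{a,b\})$ we are looking at, precisely because flipping $x_{ab}$ to $x'_{ab}$ flips the truth value of $Q$. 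Thus $x_{ab}\in\ov{Mab}\cap\dcl(\ov{Mac},\ov{Mbc})$ but $x_{ab}\notin\dcl(\ov{Ma},\ov{Mb})$, which is exactly the failure of $B(3)$ over $M$ in the form given by Fact~\ref{dcl} and the definition of $B(3)$.

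For the failure of $4$-existence over $M$, the plan is to set up a closed independent $4$-amalgamation problem whose faces are individually consistent but whose global solution would force a contradiction in the parities imposed by $Q$. I would take four independent points over $M$ and, on each of the four triangular faces, prescribe the $R$-pattern and the fiber/twist data coming from the $B(3)$-failure configuration above; the compatibility conditions along shared edges can be met face-by-face using axiom (4) and $\omega$-categoricity, but a simultaneous amalgam would require assigning $\delta$-values to all six fibers so that the four parity constraints (one per face, each of the form $\delta_{ij}+\delta_{ik}+\delta_{jk}\equiv\varepsilon_{ijk}\pmod 2$ dictated by whether $R$ holds) are met at once. Since over a model we have the freedom to make the four $R$-values produce an inconsistent system of $\mathbb{F}_2$-equations (the sum of the four face-parities is an even combination of the six edge-variables and so must be $0$, while the prescribed right-hand sides can be made to sum to $1$), no global $\delta$-assignment exists, and $4$-existence fails.

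The main obstacle I anticipate is the bookkeeping in the last step: one must verify that the prescribed face data really does constitute a \emph{closed independent} amalgamation problem (so that the failure is genuinely a failure of $4$-existence and not an artifact of an ill-posed problem), and that the $\mathbb{F}_2$-parity obstruction is the only thing preventing amalgamation. This amounts to checking that, apart from the cocycle condition on the $\delta$'s, axiom (4) supplies all the remaining freedom, and that the genericity of $R$ lets us realize any desired pattern of the four face-parities over an independent $4$-tuple based at $M$. Because the problem statement says the calculations are ``very similar to (and easier than)'' those of the first example, I expect each individual verification to be routine once the parity obstruction is isolated; the real content is identifying that obstruction and confirming it survives over a model.
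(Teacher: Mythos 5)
Your overall strategy is the right one and, for the $4$-existence part, the parity obstruction you describe is essentially what the paper's one-line ``established similarly'' is pointing at. But there is a genuine gap at the crux of the $B(3)$ argument: you choose $a,b,c$ independent over $M$ with $R(a,b,c)$ and then assert that $x_{ab}\notin\dcl(\ov{Ma},\ov{Mb})$ ``since the double cover genuinely requires both $a$ and $b$ to pin down a single element of $G^*$.'' That is precisely the point that can fail, and it is the reason the first example does \emph{not} witness failure of $B(3)$ over models even though it witnesses failure over $\emptyset$. If some $m\in I(M)$ satisfies $R(m,a,b)$, then the fiber elements $y_{am}\in\pi^{-1}(a,m)\subseteq\ov{Ma}$ and $y_{bm}\in\pi^{-1}(b,m)\subseteq\ov{Mb}$ together with $Q$ isolate $x_{ab}$ inside its fiber, so $x_{ab}\in\dcl(\ov{Ma},\ov{Mb})$ after all. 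Since $R$ is generic, a typical $M$-independent triple will have many such $m$, so your configuration as stated need not witness anything. The paper's proof consists exactly of imposing the missing condition: choose $a_1,a_2,a_3$ with $R(a_1,a_2,a_3)$ but such that \emph{no} $a\in I(M)$ satisfies $R(a,a_i,a_j)$ for any pair $i<j$; this is consistent by genericity of $R$ and compactness, and it is what makes the two elements of $\pi^{-1}(a_i,a_j)$ conjugate over $\ov{Ma_i}\cup\ov{Ma_j}$.

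The same omission affects the $4$-existence half: to twist one face you need the fiber swap on an edge $\{a_i,a_j\}$ to be an $M$-elementary self-map of $\ov{Ma_ia_j}$, and this again requires that no $m\in I(M)$ satisfies $R(m,a_i,a_j)$ (otherwise the swap changes the truth value of $Q(x_{ij},y_{im},y_{jm})$ and is not elementary, so your twisted amalgamation problem is not even a functor into $\cC_M$). You also need $R$ to hold on all four triples from the chosen $4$-tuple so that every face contributes a parity equation; and note that the right-hand sides $\varepsilon_{ijk}$ are prescribed by twisting the edge-to-face transition maps, not by varying the ``$R$-values,'' which are determined once the shared vertices are fixed. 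With the negative condition added, both halves of your argument go through and agree with what the paper intends.
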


\begin{proof}
Let $M\models T$ and choose $a_1,a_2,a_3\in I$ such that $\models R(a_1,a_2,a_3)$, but for no element $a\in I(M)$ we have $\models R(a,a_i,a_j)$, $1\le i<j\le 3$. It is clear that the elements $a_1$, $a_2$, and $a_3$ witness the failure of $B(3)$. Failure of 4-existence is established similarly.
\end{proof}

\end{document}